\newtheorem{theorem}{Theorem}[section]
\newtheorem{lemma}[theorem]{Lemma}
\newtheorem{proposition}[theorem]{Proposition}
\theoremstyle{definition}
\numberwithin{equation}{section}
\begin{document}
\setcounter{page}{1}

\vspace*{2.0cm}
\title[Coupled system of generalized KDV equations]
{Spatial analyticity of solutions for a coupled system of generalized KdV equations}
\author[A. Atmani, A. Boukarou,  D. Benterki,  and Kh. Zennir]{A. Atmani, A. Boukarou,  D. Benterki,  and Kh. Zennir}
\maketitle
\vspace*{-0.6cm}

\vskip 4mm {\footnotesize \noindent {\bf Abstract.}
The solution of a coupled system consisting of generalized Korteweg-de Vries-type equations is obtained for all time where the initial data are analytic on a band in the complex plane.  We show that the width of this band decreases algebraically with time. 	
	
 \noindent {\bf Keywords.}
Generalized Korteweg–de Vries,  well-posedness,  Radius of spatial analyticity, Analytic space.

 \noindent {\bf 2010 Mathematics Subject Classification.}
35E15, 35Q53, 35B65, 35C07.}

\renewcommand{\thefootnote}{}

\tableofcontents

 \section{Introduction}
This paper deals with the initial-value problem  for a coupled system of generalized Korteweg–de Vries (gKdV) equation
\begin{equation}\label{p01}
	\left\{\begin{array}{l}
		u_{t}+\partial_{x}^{3} u+\partial_{x}\left(u^{p} v^{p+1}\right)=0  \\
		v_{t}+\partial_{x}^{3} v+\partial_{x}\left(u^{p+1} v^{p}\right)=0, \quad x, t \in \mathbb{R}, p \in \mathbb{Z}^{+} \\
		u(x, 0)=u_{0}(x), \quad v(x, 0)=v_{0}(x),
	\end{array}\right.
\end{equation}
where the unknown  $ u = u(x, t),~ v = v(x, t)$  and the initial data$ ( u_{0}(x), v_{0}(x) ) $ are real-valued.\\
This type of equation is a special case of an important vast class of nonlinear evolution equations which was studied by  M. Ablowitz \cite{17}, and it has applications in physical  problems, which  describes the strong interaction of two dimensional long internal gravity waves.\\
For $ p=1  $, the system can be reduce to a  coupled system of modified KdV (mKdV) equations
\begin{equation}\label{p08}
	\left\{\begin{array}{l}
		u_{t}+\partial_{x}^{3} u+\partial_{x}\left(u v^{2}\right)=0  \\
		v_{t}+\partial_{x}^{3} v+\partial_{x}\left(u^{2} v\right)=0, \quad x, t \in \mathbb{R} \\
		u(x, 0)=u_{0}(x), \quad v(x, 0)=v_{0}(x).
	\end{array}\right.
\end{equation}
 Here, the author proved the local well posdness in in $H^s, s \geq \frac{1}{4}$. For $ s \geq 1  $, it is proved that the global well posdness is assured. In addition,  M. Panthee improved it to extend solution  to be in any time interval  $[0, T] $  for $ s > \frac{4}{9}$.\\
 The authors  in \cite{20} studied the local well-posdness   in $  (H^{s} \times  H^{s} ) $  with $ s > -\frac{1}{2} $ for   system consisting modified Korteweg–de Vries-type equations
 \begin{equation}
 \left\{\begin{array}{l}
 u_{t}+\partial_{x}^{3} u+\partial_{x}\left(u^{2} v^{3}\right)=0, \\
 v_{t}+ \alpha \partial_{x}^{3} v+\partial_{x}\left(u^{3} v^{2}\right)=0, \quad x, t \in \mathbb{R} \\
 u(x, 0)=u_{0}(x), \quad v(x, 0)=v_{0}(x),
 \end{array}\right.
 \end{equation}
 where $ 0 < \alpha <  1    $  and $  (u_{0}, v_{0})   $ is given  in low regularity Sobolev spaces$  (H^{s} \times  H^{s} )$, but if  $  \alpha = 1   $ the authors obtained the local well posedness for $ s \geqslant \frac{1}{4}$.\\
In \cite{7}, the  problem $ (\ref{p01})  $ is  studied and the local and global well-posedness  results with $ (u_{0}, v_{0}) \in H^{s} \times H^{s}   $, $ s\geqslant 1  $  and $ p \geqslant 1 $ is shown. The golobal well-posedness was obtained by using the next conserved quantities satisfied by the flow of $ (\ref{p01}) $
$$   \int_{\mathbb{R}  }u dx \quad\int_{\mathbb{R} }    v dx  \quad \frac{1}{2} \int_{\mathbb{R}}u^{2}+v^{2} dx
\quad  \textit{and}
   \quad  \frac{1}{2} \int_{\mathbb{R}  } u_{x}^{2}+v_{x}^{2} -\frac{2}{p+1} u^{p+1} v^{p+1} dx.$$ 
In addition,  the authors showed  the existence and nonlinear stability of the solitary wave solution. The study of stability for  solitary wave solution is followed from the abstract results of Grillakis, for more details, please see  \cite{9,4,14,18}.\\
 For $  p =2   $, the system is turn out to a coupled system of modified Korteweg–de Vries (gKdV) equation 
\begin{equation}
\left\{\begin{array}{l}\label{p07}
u_{t}+\partial_{x}^{3} u+\partial_{x}\left(u^{2} v^{3}\right)=0, \\
v_{t}+\partial_{x}^{3} v+\partial_{x}\left(u^{3} v^{2}\right)=0, \quad x, t \in \mathbb{R} \\
u(x, 0)=u_{0}(x), \quad v(x, 0)=v_{0}(x).
\end{array}\right.
\end{equation}
Panthee and Scialom  \cite{5}, investigated some well-posedness issues for eq $ (\ref{p07})   $
in $H^{s} \times H^{s}$, which proved local and global will posdness for $ s\geqslant 0$.\\
 For related problems in analytic Gevrey spaces, we review the results  in $ 2 D $ by M. Shan, L. Zhang  \cite{21}, where the authors proved that the following problem (the Cauchy problem associated with the  $2 D $  generalized Zakharov-Kuznetsov equation)
\begin{equation}
\left\{\begin{array}{l}\label{p098}
u_{t}+ (\partial_{x}^{3} +\partial_{y}^{3} )   u + (\partial_{x} + \partial_{y} ) u^{p+1} =0, \\
u(0, x,y)=u_{0}(x, y),
\end{array}\right.
\end{equation}
 has an analytic  solutions  in a strip the width, and  they gave an  algebraic lower bounds. \\
Bona and Gruji\'c \cite{22}   showed the well-posedness of a KdV-type Boussinesq
system
 \begin{equation}
\left\{\begin{array}{l}\label{p09}
u_{t}+ v_{x}  + u u_{x}  +   v_{xxx}    = 0 \\
v_{t}+ u_{x}  + ( u v)_{x}  +   u_{xxx} =  0.
\end{array}\right.
\end{equation}
There is another method in this direction, we mention the works by A. Boukarou et {\em al.} in the next series of papers \cite{BoukarouA2,Boukarou2,Boukarou3,Boukarou4,Boukarou6,Boukarouarxiv,Boukarou5}.\\
Motivated by the previouse results, we consider our main ptoblem with initial data are analytic on a band in the complex plane and obtained solution for all time.  We also showed that the width of this band decreases algebraically with time. 	\\
This paper is continuation of our prevouse results and it is structured as follows. In section $ 1$,  we give some historical review and motivate this paper to further strengthened, and  innovate the main contributions and introduce our main results which we will prove later (local and global well posedness of equation $ \eqref{p01}$). In section $ 2$, we present some definition and the necessary function spaces such as the analytic function spaces  $ \mathcal{G}_{\rho, s}  $,  analytic Bourgain space  $X_{\rho, s, b}  $ which will be used. In section $3$, we prove the Linear and Bilinear Estimates  which needed to prove the main results. In section $4$, we prove the local and global  well-posdness  and then obtained lower bound. \\
We provide a clear, sober and well-written analysis of the problem.
\begin{theorem}\label{the1.2}
	Let $ s > \frac{3}{2} $ and $  p \geq 1  $ and for initial data $(u_0,v_0)\in
	\mathcal{G}_{\rho, s}\times \mathcal{G}_{\rho, s}$, $ \rho > 0  $, there exists a positive time $  T $, such that the initial -value problem (\ref{p01}) is well-posed in the space
	$$C \left( \left[ 0, T  \right];\mathcal{G}_{\rho, s} \right)\times C \left( \left[ 0, T  \right];\mathcal{G}_{\rho, s} \right).  $$
	
\end{theorem}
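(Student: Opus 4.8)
The plan is to recast \eqref{p01} in Duhamel form and solve it by a contraction mapping argument in the analytic Bourgain space $X_{\rho,s,b}$ restricted to the time slab $[0,T]$. Writing $W(t)=e^{-t\partial_x^3}$ for the Airy propagator, a pair $(u,v)$ solves \eqref{p01} if and only if it is a fixed point of the coupled map $\Phi=(\Phi_1,\Phi_2)$ defined by
\begin{equation}
\Phi_1(u,v)(t)=\psi(t)\,W(t)u_0-\psi_T(t)\int_0^t W(t-t')\,\partial_x\bigl(u^p v^{p+1}\bigr)(t')\,dt',
\end{equation}
and symmetrically for $\Phi_2$ with the nonlinearity $\partial_x(u^{p+1}v^p)$, where $\psi,\psi_T$ are suitable smooth time cut-offs. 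I would fix $b>\tfrac12$ so that $X_{\rho,s,b}\hookrightarrow C([0,T];\mathcal{G}_{\rho,s})$; this embedding is what ultimately places the fixed point in the asserted solution space.

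First I would invoke the linear estimates established in Section 3: the homogeneous bound $\|\psi(t)W(t)u_0\|_{X_{\rho,s,b}}\lesssim \|u_0\|_{\mathcal{G}_{\rho,s}}$ and the inhomogeneous (Duhamel) bound
\begin{equation}
\Bigl\|\psi_T(t)\int_0^t W(t-t')F(t')\,dt'\Bigr\|_{X_{\rho,s,b}}\lesssim T^{\theta}\,\|F\|_{X_{\rho,s,b-1}}
\end{equation}
for some $\theta>0$, the factor $T^{\theta}$ being the source of the smallness that drives the contraction. These reduce the whole problem to controlling the nonlinear terms in $X_{\rho,s,b-1}$.

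The heart of the argument is the multilinear estimate
\begin{equation}
\bigl\|\partial_x(u^p v^{p+1})\bigr\|_{X_{\rho,s,b-1}}\lesssim \|u\|_{X_{\rho,s,b}}^{p}\,\|v\|_{X_{\rho,s,b}}^{p+1},
\end{equation}
together with its analogue for $\partial_x(u^{p+1}v^p)$. The key device for handling the analytic weight is the elementary frequency inequality $|\xi|\le |\xi_1|+\cdots+|\xi_{2p+1}|$ on the convolution support of the product, which yields $e^{\rho|\xi|}\le \prod_{j}e^{\rho|\xi_j|}$; this lets me distribute the exponential weight over the $2p+1$ factors and thereby transfer the analytic estimate to the corresponding estimate in the ordinary Bourgain space $X_{s,b}$. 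Since $s>\tfrac32>\tfrac12$, the space $H^s$ is a multiplication algebra, and the accompanying $X_{s,b}$ calculus (with the single $\partial_x$ absorbed by the $b-1$ smoothing) gives the required product bound; I expect this multilinear step, and in particular keeping the constants uniform in the degree $p$, to be the main technical obstacle.

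Finally I would combine the linear and nonlinear estimates to show that, on the ball $B_R=\{(u,v):\|u\|_{X_{\rho,s,b}}+\|v\|_{X_{\rho,s,b}}\le R\}$ with $R\sim \|u_0\|_{\mathcal{G}_{\rho,s}}+\|v_0\|_{\mathcal{G}_{\rho,s}}$, the map $\Phi$ is a contraction once $T$ is chosen small enough that $T^{\theta}R^{2p}<1$; the contraction estimate for the difference of two solutions follows from the same multilinear bound after telescoping the polynomial nonlinearities. The unique fixed point lies in $X_{\rho,s,b}\times X_{\rho,s,b}$, hence in $C([0,T];\mathcal{G}_{\rho,s})\times C([0,T];\mathcal{G}_{\rho,s})$ by the embedding, and continuous dependence on the data is read off directly from the contraction, completing the proof of Theorem \ref{the1.2}.
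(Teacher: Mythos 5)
Your overall architecture coincides with the paper's: Duhamel formulation with time cut-offs, homogeneous and inhomogeneous linear estimates in $X_{\rho,s,b}$, distribution of the analytic weight $e^{\rho(1+|\zeta|)}$ over the factors via $|\zeta|\le\sum_j|\zeta_j|$, a contraction on a ball of radius $R\sim\|(u_0,v_0)\|_{\mathcal{G}_{\rho,s}\times\mathcal{G}_{\rho,s}}$ with $T$ small enough that $T^{\theta}R^{2p}$ is small, and the embedding $X_{\rho,s,b}\hookrightarrow C([0,T];\mathcal{G}_{\rho,s})$ for $b>\frac12$ to place the fixed point in the asserted space.

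There is, however, a genuine gap at exactly the step you flag as the main obstacle: the multilinear estimate. After stripping the exponential weight you claim that $\|\partial_x(u^pv^{p+1})\|_{X_{s,b-1}}\lesssim\|u\|_{X_{s,b}}^{p}\|v\|_{X_{s,b}}^{p+1}$ follows because ``$H^s$ is a multiplication algebra'' with ``the single $\partial_x$ absorbed by the $b-1$ smoothing.'' The algebra property cannot deliver this. The obstruction is the high--low interaction: when the output frequency $\zeta$ is comparable to one input frequency $\zeta_1\sim N\gg1$ and all remaining input frequencies are $O(1)$, the symbol $(1+|\zeta|)^{s}|\zeta|$ has size $N^{s+1}$ while the weights carried by the factors supply only $N^{s}$; a full derivative is missing, and no size of $s$ removes the deficit, since the algebra property only gives $\|uvw\|_{H^s}\lesssim\prod\|\cdot\|_{H^s}$ and never bounds an $H^{s+1}$ norm of a product by $H^s$ norms of its factors. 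Nor does the exponent $b-1<0$ ``absorb'' the derivative by itself: converting modulation decay into a frequency gain requires the dispersive (Airy) structure, either through the resonance identity for $\zeta^3-\sum_j\zeta_j^3$ or, as the paper does, through the Kenig--Ponce--Vega smoothing and maximal-function estimates of Lemma \ref{2.3}, namely the bounds on $\|A^{1/2}F_{\kappa}\|_{L^4_xL^2_t}$, $\|AF_{\kappa}\|_{L^\infty_xL^2_t}$, $\|A^{-s}F_{\kappa}\|_{L^2_xL^\infty_t}$ and $\|A^{-s}F_{\kappa}\|_{L^\infty_xL^\infty_t}$. Concretely, the paper argues by duality, splits the frequency domain into regions according to which input frequency is largest, factors the symbol as $(1+|\zeta|)^{s}|\zeta|(1+|\zeta_1|)^{-s}\lesssim|\zeta|^{1/2}|\zeta_1|^{1/2}$ in the region where $|\zeta_1|$ dominates, and pairs the two resulting half-derivatives with the smoothing estimates; the remaining $2p-1$ factors are disposed of in $L^\infty_xL^\infty_t$. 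Note also that the hypothesis $s>\frac32$ enters the paper precisely as $s\ge 3b$ in the maximal-function estimate \eqref{104}, a condition your algebra argument never uses --- a further sign that the mechanism you invoke is not the one that makes the estimate true. Without this dispersive input (or an equivalent modulation analysis) the contraction cannot be closed, so as written the proposal does not prove the theorem.
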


\begin{theorem}\label{th03}
	Let $  \rho_{0} > 0 $ and $  s > \frac{3}{2} $ and let $ T \geq t_{0} $ suppose that  the solution $  u $, $ v $
	given by Theorem $ (\ref{the1.2})  $ extends globally in time.  Then, we have
	$$ (u,v)\in C ([0, 2T  ], \mathcal{G}_{\rho(T ) /2, s}  )\times C ([0, 2T  ], \mathcal{G}_{\rho(T ) /2, s}  ),$$
	where $ \rho(T ) $  is given by
	$$ \rho (t) =  \min \left\lbrace   \rho_{1},  K T ^{-2p^{2} -6p-1}  \right\rbrace.   $$
	for some constant $  K > 0 $.
	
\end{theorem}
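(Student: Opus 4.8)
The plan is to promote the local statement of Theorem~\ref{the1.2} to the fixed interval $[0,2T]$ by pairing a quantitative version of the local theory with an almost conservation law for a low-regularity analytic energy, and then iterating the local step across a partition of $[0,2T]$ while letting the radius $\rho$ contract with $T$ at a purely algebraic rate. Throughout I use the identification $\|f\|_{\mathcal{G}_{\rho,s}}=\|e^{\rho|D|}\langle D\rangle^{s}f\|_{L^2}$, the algebra property of $\mathcal{G}_{\rho,s}$ for $s>\tfrac12$, and the smoothing comparison
\begin{equation*}
\|f\|_{\mathcal{G}_{\rho/2,\,s}}\le C(\rho,s)\,\|f\|_{\mathcal{G}_{\rho,\,0}},
\end{equation*}
which follows from $\langle\xi\rangle^{2s}e^{\rho|\xi|}\le C(\rho,s)\,e^{2\rho|\xi|}$. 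This inequality lets me trade the infinitely smoothing factor $e^{\rho|D|/2}$ for $s$ derivatives at the price of halving the radius, and it is exactly what is responsible for the $\rho(T)/2$ appearing in the conclusion: I will control an analytic $L^2$-type energy at radius $\rho(T)$ and only at the very end upgrade to the level-$s$ Gevrey bound at radius $\rho(T)/2$.

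First I would revisit the $X_{\rho,s,b}$ contraction behind Theorem~\ref{the1.2} to extract, uniformly in $\rho\in(0,\rho_1]$, a quantitative local step: if $\|u_0\|_{\mathcal{G}_{\rho,s}}+\|v_0\|_{\mathcal{G}_{\rho,s}}\le D$, then the solution exists on $[0,\delta]$ with $\delta\simeq D^{-\gamma}$ for some $\gamma=\gamma(p)>0$, and on that interval $\|u\|_{X_{\rho,s,b}}+\|v\|_{X_{\rho,s,b}}\le CD$ together with a comparable bound on $\sup_{[0,\delta]}(\|u\|_{\mathcal{G}_{\rho,s}}+\|v\|_{\mathcal{G}_{\rho,s}})$. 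Uniformity in $\rho$ is automatic, since decreasing $\rho$ only decreases every $\mathcal{G}_{\rho,s}$- and $X_{\rho,s,b}$-norm; the level-$s$ input needed to restart the local theory at each node will be supplied, through the smoothing comparison above, by the globally finite $H^s$ norm (available by hypothesis) together with the analytic energy controlled in the next step.

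The heart of the argument — and the step I expect to be the main obstacle — is the almost conservation law for $E_\rho(t):=\|e^{\rho|D|}u(t)\|_{L^2}^2+\|e^{\rho|D|}v(t)\|_{L^2}^2$. Applying $e^{\rho|D|}$ to \eqref{p01}, testing against $e^{\rho|D|}u$ and $e^{\rho|D|}v$ and integrating, the dispersive terms drop after integration by parts and one is left with a multilinear expression in $e^{\rho|D|}u$, $e^{\rho|D|}v$, $e^{\rho|D|}(u^{p}v^{p+1})$ and $e^{\rho|D|}(u^{p+1}v^{p})$. If $e^{\rho|D|}$ were multiplicative this would collapse to $\tfrac{1}{p+1}\int\partial_x\big((e^{\rho|D|}u)^{p+1}(e^{\rho|D|}v)^{p+1}\big)\,dx=0$, which is precisely the cancellation underlying the conserved quantity $\tfrac12\int(u^2+v^2)\,dx$ recorded in the introduction. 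The actual expression is therefore a commutator measuring the failure of $e^{\rho|D|}$ to distribute over the $(2p+1)$-fold products; exploiting $|\xi|\le\sum_j|\xi_j|$ on the convolution together with a mean-value bound for $e^{\rho|\xi|}-e^{\rho\sum_j|\xi_j|}$ extracts a positive power $\rho^{\theta}$, leading to
\begin{equation*}
\sup_{t\in[0,\delta]}E_{\rho}(t)\le E_{\rho}(0)+C\rho^{\theta}\big(\|u\|_{X_{\rho,s,b}}+\|v\|_{X_{\rho,s,b}}\big)^{2p+1}.
\end{equation*}
Because the leading term cancels exactly, this is an \emph{increment} estimate with no Gronwall-type factor; bounding the residual by the Gevrey--Bourgain norms of the previous step is where $s>\tfrac32$, the algebra property, and the multilinear estimates of Section~3 are all used, and the joint dependence of $\theta$ and of the degree $2p+1$ on $p$ is exactly what will fix the exponent $2p^2+6p+1$.

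Finally I would iterate by a continuity argument. Since the solution is assumed global, I split $[0,2T]$ into $n\simeq 2T/\delta\simeq T\,D^{\gamma}$ consecutive steps of length $\delta$ and apply the almost conservation law on each, using step two to bound the $X_{\rho,s,b}$-norms by the current energy; writing $E_k$ for the analytic energy at the $k$-th node gives $E_{k+1}\le E_{k}+C\rho^{\theta}E_k^{(2p+1)/2}$ as long as the energy stays comparable to $E_0$. Summing over the $n$ steps, the total increment is at most $C\,T\,\rho^{\theta}$ times a fixed power of $D$, so choosing
\begin{equation*}
\rho=\rho(T)=\min\big\{\rho_1,\,K\,T^{-(2p^{2}+6p+1)}\big\}
\end{equation*}
with $K$ small forces this increment below $E_0$; hence $E_k\le 2E_0$ at every node, the bootstrap closes, and $E_{\rho(T)}(t)\le 2E_{\rho(T)}(0)$ for all $t\in[0,2T]$. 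The bookkeeping of $\gamma$, $\theta$ and the degree $2p+1$ in the requirement $n\,C\rho^{\theta}\le 1$ is precisely what selects the stated exponent. Applying the smoothing comparison pointwise in $t$ converts this uniform control of $E_{\rho(T)}$ into a uniform bound on $\|u(t)\|_{\mathcal{G}_{\rho(T)/2,s}}+\|v(t)\|_{\mathcal{G}_{\rho(T)/2,s}}$, and continuity in $t$ is inherited from the local theory, yielding $(u,v)\in C([0,2T];\mathcal{G}_{\rho(T)/2,s})\times C([0,2T];\mathcal{G}_{\rho(T)/2,s})$, as claimed.
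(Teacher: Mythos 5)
Your strategy (a quantified local step, an almost conservation law for the analytic $L^{2}$ energy $E_{\rho}$, and an iteration over $\simeq T/\delta$ subintervals) is genuinely different from the paper's proof, but as written it has a structural gap at the ``restart'' step, and for this particular system the gap is fatal rather than technical. Your almost conservation law must live at the $\mathcal{G}_{\rho,0}$ level, because the exact cancellation you invoke comes from the conserved quantity $\tfrac12\int(u^{2}+v^{2})\,dx$ and from nothing else. But the local theory you need to re-run on each subinterval --- the machinery of Theorem \ref{the1.2}, whose multilinear estimate Lemma \ref{1.3} requires $s\ge 3b>\tfrac32$ --- needs data in $\mathcal{G}_{\rho,s}$ at the \emph{same} radius $\rho$ at which the energy is propagated, since the increment of $E_{\rho}$ is estimated by $X_{\rho,s,b}$-norms. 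There is no level-$0$ local theory to fall back on (for $p\ge2$ the $L^{2}$ norm is supercritical for this nonlinearity, and even for $p=1$ the mKdV-type threshold is $s\ge\tfrac14$). Your proposed bridge --- the global $H^{s}$ bound together with $E_{\rho}$ controls the level-$s$ Gevrey norm --- is false at a fixed radius: knowing $\|f\|_{\mathcal{G}_{\rho,0}}$ and $\|f\|_{H^{s'}}$ for any finite $s'$ does not control $\|f\|_{\mathcal{G}_{\rho,s}}$ (test $\hat f$ concentrated at frequency $N$: you would need $N^{s}e^{\rho N}\lesssim \max\bigl(e^{\rho N},N^{s'}\bigr)$, which fails as $N\to\infty$). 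The correct smoothing inequality, the one you state, inherently costs a strict decrease of radius, $\|f\|_{\mathcal{G}_{\rho',s}}\le C(\rho-\rho')^{-s}\|f\|_{\mathcal{G}_{\rho,0}}$ with $\rho'<\rho$; iterating it either halves the working radius at every node (giving $\rho\sim 2^{-n}$, i.e.\ exponential decay, exactly what you must avoid) or, with additive decrements $\epsilon\simeq\rho/n$, injects a constant $\simeq(n/\rho)^{s(2p+2)}$ into each increment, which diverges as $\rho\to0$ and prevents the bootstrap from closing with any algebraic $\rho(T)$. This mismatch is precisely why the paper does not iterate at all: it mollifies the equation (the approximations \eqref{eq10}), writes one Duhamel--Bourgain estimate for $\psi_{T}u^{n}$ over the whole interval, feeds the assumed global $H^{s+1}$ bound into the Sobolev part via Lemma \ref{lem23}, gains smallness from the $\rho^{1/2}$ factor built into its second multilinear estimate, closes a single bootstrap $y\le x+cT^{1/2}\rho^{1/2}y^{2p+1}$ by choosing $\rho\simeq a^{2}d^{-2}x^{-4p}2^{-4p}$ (Proposition \ref{pro02}, which is where $T^{-2p^{2}-6p-1}$ actually comes from), and then recovers the solution by equicontinuity and compactness.

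A secondary but real defect: the stated exponent is asserted, not derived. You leave both $\gamma$ (local time of existence) and $\theta$ (commutator gain) unspecified, and your own arithmetic would give $\rho(T)\simeq T^{-1/\theta}$ up to powers of the data, with no computation showing this equals or beats $2p^{2}+6p+1$. Since $\mathcal{G}_{\rho',s}\hookrightarrow\mathcal{G}_{\rho,s}$ for $\rho\le\rho'$, any rate at least as good would indeed imply the theorem, so this part is repairable in principle; but ``the bookkeeping is precisely what selects the stated exponent'' is a claim, not a proof, and it cannot be checked until the first gap is resolved --- either by proving multilinear estimates at the $L^{2}$ (or Hamiltonian $H^{1}$) level for this system, or by replacing the iteration with a one-shot argument of the paper's type.
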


 \section{Preliminary estimates and Function spaces}
The $\widehat{ u}$ is denote the Fourier transform of  $ u $ which is defined as 
$$
\widehat{ u}(\zeta) =   \frac{1}{\sqrt{2\pi }}\int _{-\infty}^{+ \infty} u(x) e^{-ix \zeta}  dx. $$
For a function $  u(x, t) $ of two variable  we have 
$$
\widehat{u}^{x} (\zeta, t) =   \frac{1}{\sqrt{2\pi }}\int _{-\infty}^{+ \infty} u(x, t) e^{-ix \zeta}  dx,
$$
and
$$
\widehat{u} (\zeta,\eta ) =   \frac{1}{2\pi }\int _{-\infty}^{+ \infty} \int _{-\infty}^{+ \infty} u(x, t) e^{-ix \zeta} e^{-it \eta} dx d\eta.
$$ 
We  note that the operators  $ A, \Lambda  $ and $  F_{\rho }$ are defined as
\[
\widehat{Au} (\zeta,\eta ) = \left(  1+ | \zeta | \right)\widehat{u}(\zeta,\eta );
\]
\[
\widehat{\Lambda u} (\zeta,\eta ) = \left(  1+ | \eta | \right)\widehat{u}(\zeta,\eta );
\]
\[
 \widehat{F_{\kappa} }(\zeta, \eta ) =  \frac{f(\zeta, \eta  )}{\left(1+| \eta - \zeta^{3} |\right)^{\kappa} }.
\]
The  mixed  $ L^{p}- L^{q} $ -norm  is defined by 
$$
\| u \|_{L^{p} L^{q} }  =  \left( \int _{-\infty}^{+ \infty} \left| \int _{-\infty}^{+ \infty}  |u(x, t)|^{q }  dt\right|^{\frac{p}{q} }    dx  \right)^{\frac{1}{p} }
$$ 	
The analytic  Gevrey class $ \mathcal{G}_{\rho,s}$ is defined by Foias and Temam \cite{T1} as
\[
\left\|u_{0}\right\|_{\mathcal{G}_{\rho, s}}^{2}=\|\mathrm{e}^{ \rho(1+|\zeta|)}(1+|\zeta|)^{s} \widehat{u_{0}}(\zeta) \|_{L^{2}_{\zeta}}.
\]
For     $ s,  \in \mathbb{R} $, $b \in[-1,1]$  and  $ \rho > 0  $, we denote $  X_{\rho,s,b}$ by $\|\cdot\|_{\rho,s, b}$  with respect to the norm

		\[
		\| u \|_{X_{\rho,s,b}} =
		\bigg\|   e^{\rho  ( 1+ |\zeta  |)} (1+|  \zeta  | )^{s}  (1+\left|\eta-\zeta^{3}\right|)^{b}   \hat{u}( \zeta,\eta )        \bigg\|_{L^{2}_{\zeta,  \eta}}.
		\]
For $\rho=0, X_{\rho, s, b}$ coincides with the space $X_{s, b}$ introduced by Bourgain \cite{L1}, and Kenig, Ponce and Vega \cite{A1}. The norm of $X_{s, b}$ is denoted by $\|\cdot\|_{s, b}$, as follow
\[
\| u \|_{X_{s,b}} =
\bigg\|  (1+|  \zeta  | )^{s}  (1+\left|\eta-\zeta^{3}\right|)^{b}   \hat{u}( \zeta,\eta )        \bigg\|_{L^{2}_{\zeta,  \eta}}.
\]
\section{Linear and Multilinear Estimates}
In this section, we shall deduce several estimates to be used in the proof of Theorem (\ref{the1.2}). 
\begin{lemma}\label{lem3}
	Let  $  0 < \sigma  < \rho $ and $ n \in \mathbb{N}  $. Then, we have 
	\begin{align*}
	\sup_{\substack{ x+iy \in S_{\rho- \sigma } }} \vert \partial^{n}_{x} u(x+iy)  \vert \leq C  \Vert  u \Vert_{\mathcal{G}_{\rho}},
	\end{align*}
	where $ C $ is constant depending on   $ \zeta $  and $  n $.
\end{lemma}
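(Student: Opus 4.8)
The plan is to represent $u$ and its $x$-derivatives on the strip through the Fourier inversion formula, and then reduce the pointwise bound to an $L^2$ estimate by Cauchy--Schwarz. Since $u\in\mathcal{G}_{\rho}$, the weight $e^{\rho(1+|\zeta|)}$ forces $\widehat{u}(\zeta)$ to decay exponentially, so the inversion integral converges absolutely for complex arguments $z=x+iy$ with $|y|<\rho$ and defines the analytic extension of $u$ to $S_\rho$. Concretely, I would write
\[
\partial_x^n u(x+iy)=\frac{1}{\sqrt{2\pi}}\int_{-\infty}^{+\infty}(i\zeta)^n\,\widehat{u}(\zeta)\,e^{ix\zeta}e^{-y\zeta}\,d\zeta,
\]
differentiation under the integral sign being justified by the same exponential decay of $\widehat{u}$.

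Next I would take absolute values and use that for $x+iy\in S_{\rho-\sigma}$ one has $|y|\le\rho-\sigma$, whence $|e^{-y\zeta}|=e^{-y\zeta}\le e^{|y||\zeta|}\le e^{(\rho-\sigma)|\zeta|}$. This yields
\[
|\partial_x^n u(x+iy)|\le\frac{1}{\sqrt{2\pi}}\int_{-\infty}^{+\infty}|\zeta|^n\,e^{(\rho-\sigma)|\zeta|}\,|\widehat{u}(\zeta)|\,d\zeta.
\]
The decisive step is to insert and remove the Gevrey weight: writing the integrand as $\big(|\zeta|^n e^{(\rho-\sigma)|\zeta|}e^{-\rho(1+|\zeta|)}\big)\cdot\big(e^{\rho(1+|\zeta|)}|\widehat{u}(\zeta)|\big)$ and applying Cauchy--Schwarz, the second factor produces exactly $\|u\|_{\mathcal{G}_\rho}$, while the first factor is the $L^2_\zeta$-norm of $e^{-\rho}\,|\zeta|^n e^{-\sigma|\zeta|}$.

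Finally I would verify that this first factor is finite and depends only on $\sigma$ and $n$: since
\[
\int_{-\infty}^{+\infty}|\zeta|^{2n}e^{-2\sigma|\zeta|}\,d\zeta=\frac{2\,(2n)!}{(2\sigma)^{2n+1}}<\infty,
\]
the constant $C=C(\sigma,n)$ is explicit. As the resulting right-hand side is independent of $x$ and $y$, taking the supremum over $x+iy\in S_{\rho-\sigma}$ completes the argument. There is no genuine obstacle here; the only point requiring care is the Cauchy--Schwarz splitting, which trades the slightly-too-large weight $e^{(\rho-\sigma)|\zeta|}$ against the full Gevrey weight $e^{\rho(1+|\zeta|)}$ and leaves the convergent factor $e^{-\sigma|\zeta|}$. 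This is precisely where the hypothesis $\sigma<\rho$, i.e.\ the retreat to a strictly narrower strip, is essential: on the full strip ($\sigma=0$) the remaining integral would diverge and no such uniform bound could hold.
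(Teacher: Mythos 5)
The paper never proves this lemma: it is stated bare (as a known Paley--Wiener/embedding-type fact from the Gruji\'c--Kalisch line of work, where it appears for the analytic Gevrey class), so there is no in-paper argument to compare yours against. Your proof is correct and complete, and it is precisely the standard argument one would supply to fill this gap: Fourier inversion defines the analytic extension of $u$ to the strip, since the weight $e^{\rho(1+|\zeta|)}$ forces exponential decay of $\widehat{u}$; on $S_{\rho-\sigma}$ one has $|e^{-y\zeta}|\le e^{(\rho-\sigma)|\zeta|}$; and the Cauchy--Schwarz splitting trades $e^{(\rho-\sigma)|\zeta|}$ against the full weight $e^{\rho(1+|\zeta|)}$, leaving the factor $e^{-\rho}|\zeta|^n e^{-\sigma|\zeta|}$, whose $L^2_\zeta$ norm you compute explicitly via $\int_{-\infty}^{+\infty}|\zeta|^{2n}e^{-2\sigma|\zeta|}\,d\zeta = 2\,(2n)!/(2\sigma)^{2n+1}$. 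Two remarks. First, your argument shows the constant depends on $\sigma$ and $n$; the paper's claim that $C$ depends on ``$\zeta$'' is evidently a misprint (the Fourier variable cannot appear in a uniform constant), and your version is the correct one. Second, if the norm $\Vert\cdot\Vert_{\mathcal{G}_{\rho}}$ is to be read as $\Vert\cdot\Vert_{\mathcal{G}_{\rho,s}}$ with a general index $s$, the identical splitting still works, since $(1+|\zeta|)^{-s}|\zeta|^n e^{-\sigma|\zeta|}$ remains square-integrable; so your proof covers the form of the lemma actually used later in the paper. Your closing observation that the bound must fail on the full strip ($\sigma=0$) is also accurate and explains why the hypothesis $0<\sigma<\rho$ is not merely technical.
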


\begin{lemma}\label{lem2}
	Let $  b > \frac{1}{2} $,~~$  s \in \mathbb{R}$
	~~and $ \rho \geq 0 $, then for all $   T > 0 $,~
	we have 
	\begin{equation*}
	X_{\rho, s, b } \hookrightarrow C\left([0,T], G^{\rho, s} \right).
	\end{equation*}
\end{lemma}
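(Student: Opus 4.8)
The plan is to follow the classical argument establishing $X_{s,b}\hookrightarrow C([0,T],H^{s})$ for $b>\tfrac12$, carrying the analytic weight $e^{\rho(1+|\zeta|)}$ along unchanged. The crucial observation is that this weight depends only on the spatial frequency $\zeta$ and therefore does not interact with the temporal frequency $\eta$ in any of the estimates below; in this sense the passage from $H^{s}$ to $\mathcal{G}_{\rho,s}$ is inert, and the whole difficulty is already present in the classical case.

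First I would pass to the spatial Fourier side. Writing $g(\zeta,\eta)=e^{\rho(1+|\zeta|)}(1+|\zeta|)^{s}(1+|\eta-\zeta^{3}|)^{b}\hat{u}(\zeta,\eta)$, so that $\|g\|_{L^{2}_{\zeta,\eta}}=\|u\|_{X_{\rho,s,b}}$, and recovering $\widehat{u}^{x}(\zeta,t)$ by inverting the Fourier transform in $\eta$, one obtains
\[
e^{\rho(1+|\zeta|)}(1+|\zeta|)^{s}\,\widehat{u}^{x}(\zeta,t)=\frac{1}{\sqrt{2\pi}}\int_{-\infty}^{+\infty}\frac{g(\zeta,\eta)}{(1+|\eta-\zeta^{3}|)^{b}}\,e^{it\eta}\,d\eta .
\]
Substituting $\eta=\zeta^{3}+\mu$ factors out a unimodular phase $e^{it\zeta^{3}}$, and the Cauchy--Schwarz inequality in $\mu$ together with the bound $\int_{-\infty}^{+\infty}(1+|\mu|)^{-2b}\,d\mu=C_{b}<\infty$ (finite precisely because $b>\tfrac12$) yields the pointwise-in-$t$ estimate
\[
\big|e^{\rho(1+|\zeta|)}(1+|\zeta|)^{s}\,\widehat{u}^{x}(\zeta,t)\big|^{2}\le \frac{C_{b}}{2\pi}\int_{-\infty}^{+\infty}|g(\zeta,\eta)|^{2}\,d\eta .
\]
Integrating in $\zeta$ gives $\sup_{t}\|u(t)\|_{\mathcal{G}_{\rho,s}}\le C\,\|u\|_{X_{\rho,s,b}}$, which already furnishes membership in $L^{\infty}([0,T],\mathcal{G}_{\rho,s})$ and the desired norm inequality for the embedding.

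The remaining point is continuity of $t\mapsto u(t)$ in $\mathcal{G}_{\rho,s}$, which I would establish by a twofold use of dominated convergence. For almost every fixed $\zeta$ the function $g(\zeta,\cdot)$ lies in $L^{2}_{\eta}$, so the integrand above is dominated by the $\eta$-integrable majorant $|g(\zeta,\eta)|(1+|\eta-\zeta^{3}|)^{-b}$; letting $t\to t_{0}$ and using $e^{it\eta}\to e^{it_{0}\eta}$ shows that $e^{\rho(1+|\zeta|)}(1+|\zeta|)^{s}\widehat{u}^{x}(\zeta,t)\to e^{\rho(1+|\zeta|)}(1+|\zeta|)^{s}\widehat{u}^{x}(\zeta,t_{0})$ for almost every $\zeta$. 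The square of the difference is bounded by $4$ times the $\zeta$-integrable majorant $\tfrac{C_{b}}{2\pi}\int|g(\zeta,\eta)|^{2}\,d\eta$ supplied by the previous step, so a second dominated-convergence argument in $\zeta$ gives $\|u(t)-u(t_{0})\|_{\mathcal{G}_{\rho,s}}\to0$, which completes the proof.

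I do not anticipate a genuine obstacle: the single place demanding care is the hypothesis $b>\tfrac12$, which is exactly what makes $(1+|\mu|)^{-2b}$ integrable and hence renders both the uniform bound and the dominating functions finite. The analytic weight $e^{\rho(1+|\zeta|)}$, being a function of $\zeta$ alone, passes through every step untouched, so no new phenomenon arises relative to the standard $\rho=0$ embedding.
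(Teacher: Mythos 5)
Your argument is correct: the norm identity $\|g\|_{L^{2}_{\zeta,\eta}}=\|u\|_{X_{\rho,s,b}}$, the Fourier inversion in the time frequency, the Cauchy--Schwarz step using $\int(1+|\mu|)^{-2b}\,d\mu<\infty$ for $b>\tfrac12$, and the two dominated-convergence passes together give both the uniform bound $\sup_{t}\|u(t)\|_{\mathcal{G}_{\rho,s}}\le C\|u\|_{X_{\rho,s,b}}$ and continuity in $t$, which is the full content of the lemma. The paper reaches the same conclusion by a shorter, modular route: it defines the multiplier operator $\Theta$ by $\widehat{\Theta u}^{x}(\zeta,t)=e^{\rho(1+|\zeta|)}\widehat{u}^{x}(\zeta,t)$, records the two identities $\|u\|_{X_{\rho,s,b}}=\|\Theta u\|_{X_{s,b}}$ and $\|u\|_{\mathcal{G}_{\rho,s}}=\|\Theta u\|_{H^{s}}$, and then applies the \emph{classical} embedding $X_{s,b}\hookrightarrow C([0,T],H^{s})$, $b>\tfrac12$, to $\Theta u$ as a known black box. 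That conjugation is precisely the ``inertness'' of the weight that you observed informally; the paper formalizes it once and for all, so that any property of $X_{s,b}$ stable under spatial Fourier multipliers transfers to $X_{\rho,s,b}$ with no further work. What your version buys in exchange is self-containedness: you actually prove the classical embedding rather than cite it, and in doing so you exhibit exactly where $b>\tfrac12$ enters (integrability of $(1+|\mu|)^{-2b}$) and why continuity, not just an $L^{\infty}_{t}$ bound, holds. Either proof is acceptable; yours is longer but would stand on its own in a paper that did not wish to invoke the Bourgain-space literature.
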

\begin{proof}
	We  define the operator $ \Theta $ \\
	$$  \widehat{\Theta u}^{x}(\zeta, t) = e^{\rho (1+| \zeta |)}\widehat{u}^{x}(\zeta, t),    $$
	satisfy
	\begin{equation*}
	\|   u   \|_{X_{\rho, s, b}} = \|   \Theta  u   \|_{X_{ s, b}},
	\end{equation*}
	and
	\begin{equation*}
	\|   u   \|_{\mathcal{G}_{\rho, s }} = \|   \Theta u   \|_{H^{ s}}.
	\end{equation*}
	 We observe that $    \Theta u $ belongs to $ C([0,T  ], H^{s})$  and for
	some $ C > 0 $ we have
	\begin{equation*}
	\|   \Theta u   \|_{C\left([0,T], H^{s} \right)} \leq C ~\|   \Theta u  \|_{X_{ s, b}}.
	\end{equation*}
	Thus, it follows that $u \in C\left([0,T], G^{\rho, s} \right)  $ and
	\begin{equation*}
	\|   u   \|_{C\left([0,T],  G^{\rho, s}\right)} \leq C ~\|   u  \|_{X_{   \rho, s, b}}.
	\end{equation*}
\end{proof}
 By using Duhamel's formula (\ref{p01}), we may write the solution
\begin{equation*}\left\{
\begin{array}{l}
u(x,t) =   W(t)u_{0}(x  )   - \displaystyle\int_{0}^{t}  W(t-t^{\prime} )w_{1}(x, t^{\prime})dt^{\prime}, \\ \\
v(x,t)=   W(t)v_{0} (x  )   - \displaystyle\int_{0}^{t}  W(t-t^{\prime})w_{2}(x, t^{\prime})dt^{\prime},
\end{array}\right.
\end{equation*} \\
where $W(t)= e^{-t\partial_{x}^{3}}$, $w_{1} = \partial_{x}\left(u^{p} v^{p+1}\right) $ and
$w_{2} = \partial_{x}\left(u^{p+1} v^{p}\right)$.\\
Next, we localize in time variable by using a cut-off function
$ \psi(t)  \in  C_{0}^{\infty} (-2,2)$  with\\
$   0\leq \psi(t) \leq 1,  \psi(t) =1      $
on $[-1,1]$ and for
$0 < T < 1$.\\
We define $\psi_{T}(t)= \psi(\frac{t}{T}) $, where
\begin{equation*}
\left\{\begin{array}{l}
\psi \in C_{0}^{\infty}, \psi = 1 \quad  in \big[  -1; 1 \big]
\\
supp \psi\subset  \big[  -2; 2 \big]\\
\psi_{T}(t) = \psi((\frac{t}{T})).
\end{array}\right.
\end{equation*}
We consider the operator $  \varXi  $, $  \Gamma $ given by the following
\begin{equation}\label{p5}
\left\{\begin{array}{l}
\varXi (t)	= \psi (t) W(t)u_{0}- \psi_{T}(t) \displaystyle \int_{0}^{t} W(t-t^{\prime})w_{1}(t^{\prime}) dt^{\prime} \\ \\
\Gamma(t)= \psi (t)W(t)v_{0}- \psi_{T}(t)\displaystyle\int_{0}^{t} W(t-t^{\prime})w_{2}(t^{\prime}) dt^{\prime}.
\end{array}\right.
\end{equation}
 We start with the following useful Lemma.
\begin{lemma}\label{lem1}\cite{A1,13}
	Let $ \rho \geq 0 $, $b > \frac{1}{2}$,~~$ b-1 <  b'  < 0 $, and $ T \geq 1 $. Then there exist a constant $ c $ such that the following estimates holds 
	\begin{equation}\label{eq1}
	\|  \psi(t) W(t)u_{0}\|_{\rho, s, b}  \leq c T^{\frac{1}{2}} \| u_{0} \|_{\mathcal{G}_{\rho, s }},
	\quad \quad
	\|  \psi(t) W(t)v_{0}\|_{\rho, s, b}  \leq c T^{\frac{1}{2}} \| v_{0} \|_{\mathcal{G}_{\rho, s }},
	\end{equation}
	and
	\begin{equation}\label{eq3}
	\|  \psi_{T}(t) u\|_{\rho, s, b }  \leq c   \| u\|_{\rho, s, b},
	\quad \quad
		\|  \psi_{T}(t) v\|_{\rho, s, b}  \leq c   \| v\|_{\rho, s, b},
	\end{equation}
	and
	\begin{equation}\label{eq4}
	\|   \psi_{T}(t) \int_{0}^{t}  W(t-s)w(s) ds\|_{\rho, s, b}  \leq c  T\| w\|_{\rho, s, b'}.
	\end{equation}
\end{lemma}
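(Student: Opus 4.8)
The plan is to strip off the analytic weight by conjugating with the multiplier operator $\Theta$ introduced in the proof of Lemma \ref{lem2}, thereby reducing each of the three estimates to its classical $\rho=0$ counterpart for the Airy group, which is precisely the content of the Kenig--Ponce--Vega type results in \cite{A1,13}. Recall that $\widehat{\Theta u}^{x}(\zeta,t)=e^{\rho(1+|\zeta|)}\widehat{u}^{x}(\zeta,t)$ is a Fourier multiplier acting only in the spatial variable, and that, as already verified, it is an isometry $\|u\|_{\rho,s,b}=\|\Theta u\|_{s,b}$ and $\|u\|_{\mathcal{G}_{\rho,s}}=\|\Theta u\|_{H^{s}}$. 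The decisive structural facts are that $\Theta$ commutes with the Airy propagator $W(t)=e^{-t\partial_{x}^{3}}$, both being Fourier multipliers in $\zeta$ alone, and that it commutes with the time cut-offs $\psi(t)$ and $\psi_{T}(t)$, which act only in $t$. In particular $\Theta$ passes through the Duhamel integral, since $\Theta\int_{0}^{t}W(t-s)w(s)\,ds=\int_{0}^{t}W(t-s)(\Theta w)(s)\,ds$.

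Granting these commutation relations, I would prove each estimate by applying $\Theta$ to both sides. For \eqref{eq1}, write $\|\psi(t)W(t)u_{0}\|_{\rho,s,b}=\|\psi(t)W(t)\Theta u_{0}\|_{s,b}$ and invoke the standard homogeneous bound $\|\psi(t)W(t)f\|_{s,b}\leq c\|f\|_{H^{s}}$ with $f=\Theta u_{0}$; since $T\geq1$ one has $1\leq T^{1/2}$, so the stated factor $T^{1/2}$ is admissible, and $\|\Theta u_{0}\|_{H^{s}}=\|u_{0}\|_{\mathcal{G}_{\rho,s}}$ closes the inequality (identically for $v_{0}$). For \eqref{eq3}, the same conjugation reduces the claim to the boundedness of multiplication by $\psi_{T}$ on $X_{s,b}$ for $b>\tfrac12$, uniformly in $T$, which follows because $\widehat{\psi_{T}}$ concentrates near the origin as $T$ grows. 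For the inhomogeneous estimate \eqref{eq4}, conjugation together with the commutation through the Duhamel integral reduces matters to the classical bound $\|\psi_{T}(t)\int_{0}^{t}W(t-s)(\Theta w)(s)\,ds\|_{s,b}\leq cT\|\Theta w\|_{s,b'}$, valid in the regime $b-1<b'<0<\tfrac12<b$, after which the isometry restores the weighted norms.

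The only genuine analytic content lies in the classical $\rho=0$ inequalities themselves, most notably the inhomogeneous bound \eqref{eq4}, whose proof proceeds by splitting the Duhamel integral according to the size of the modulation $|\eta-\zeta^{3}|$ and treating the near-resonant and non-resonant pieces separately; these are exactly the estimates supplied by \cite{A1,13}, so I would cite them rather than reprove them. Consequently the step I expect to require the most care is not any oscillatory-integral estimate but the elementary yet essential verification that $\Theta$ is an isometry onto the unweighted spaces and commutes with $W(t)$, with $\psi$ and $\psi_{T}$, and with the time integration. Once this is in place, the weighted estimates follow from the unweighted ones verbatim, with the same constants and the same powers of $T$.
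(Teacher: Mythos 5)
Your proposal is correct and matches the paper's treatment: the paper states this lemma without proof, citing \cite{A1,13} for the unweighted $X_{s,b}$ estimates, and the reduction you describe---conjugating by the multiplier $\Theta$, which is an isometry from $X_{\rho,s,b}$ onto $X_{s,b}$ and from $\mathcal{G}_{\rho,s}$ onto $H^{s}$ and commutes with $W(t)$, the time cut-offs, and the Duhamel integral---is exactly the substitution argument the paper itself uses in the proof of Lemma \ref{lem2}. Your verification of the commutation relations and the uniformity in $T\geq 1$ of the cut-off bound supplies precisely the bridge the paper leaves implicit between the cited $\rho=0$ results and the weighted statement.
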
 
\begin{lemma}\label{2.3}
	(\cite{13,4})
	Let $ s $ and $ \kappa $ be given. There is a constant $ c $ depending on $ s $ and $\kappa  $ such that 		
	\begin{equation}\label{102}
	If \quad \kappa > \frac{1}{4}, \quad then \quad
	\| A^{\frac{1}{2}} F_{\kappa} \|_{L_{x}^{4} L_{t}^{2} }  \leq C \| f \|_{L^{2}_{\zeta} L^{2}_{\eta} },
	\end{equation} 
	
	\begin{equation}\label{103}
	If \quad \kappa > \frac{1}{4},\quad then \quad
	\| A F_{\kappa} \|_{L_{x}^{\infty} L_{t}^{2} }  \leq C \| f \|_{L^{2}_{\zeta} L^{2}_{\eta} },
	\end{equation} 
	
	\begin{equation}\label{104}
	\textit{If} \quad  \kappa > \frac{1}{2}, \quad \textit{and}\quad s >3\kappa ,\quad\textit{then}  \quad
	\| A^{-s} F_{\kappa} \|_{L_{x}^{2} L_{t}^{\infty}  }  \leq C \| f \|_{L^{2}_{\zeta} L^{2}_{\eta} },
	\end{equation} 
	
	\begin{equation}\label{105}
	\textit{If} \quad  \kappa > \frac{1}{2}, \quad \textit{and}\quad s >\frac{1}{4},\quad\textit{then}  \quad
	\| A^{-s} F_{\kappa} \|_{L_{x}^{4} L_{t}^{\infty}  }  \leq C \| f \|_{L^{2}_{\zeta} L^{2}_{\eta} },
	\end{equation} 
	
	\begin{equation}\label{106}
	If \quad \kappa > \frac{1}{2}, \quad and \quad s > \frac{1}{2}, \quad\textit{then}  \quad
	\| A^{-s} F_{\kappa} \|_{L_{x}^{\infty} L_{t}^{\infty}  }  \leq  C \| f \|_{L^{2}_{\zeta} L^{2}_{\eta} }.
	\end{equation}
\end{lemma}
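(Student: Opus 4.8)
The plan is to read each line of Lemma~\ref{2.3} as the continuity of the operator $f\mapsto A^{a}F_{\kappa}$ from $L^{2}_{\zeta,\eta}$ into a mixed space $L^{p}_{x}L^{q}_{t}$, and to reduce every one of them to a \emph{homogeneous} space--time estimate for the free Airy group $W(t)=e^{-t\partial_{x}^{3}}$ by a transfer principle. First I would record the transfer identity. Setting $\widehat{g_{\tau}}(\zeta)=f(\zeta,\tau+\zeta^{3})$ and changing variables $\eta=\tau+\zeta^{3}$ gives
\[
F_{\kappa}(x,t)=c\int_{\mathbb{R}}\frac{e^{it\tau}}{(1+|\tau|)^{\kappa}}\,\bigl[W(t)g_{\tau}\bigr](x)\,d\tau,
\qquad
\int_{\mathbb{R}}\|g_{\tau}\|_{L^{2}}^{2}\,d\tau=\|f\|_{L^{2}_{\zeta,\eta}}^{2},
\]
the second identity being the same change of variables at fixed $\zeta$. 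Since $|e^{it\tau}|=1$, this modulation leaves every $L^{p}_{x}L^{q}_{t}$ norm unchanged and $A^{a}$ commutes past the $\tau$-integral, so the whole lemma is reduced to bounding $\|A^{a}W(t)g_{\tau}\|_{L^{p}_{x}L^{q}_{t}}$ uniformly in $\tau$ and then integrating the weight $(1+|\tau|)^{-\kappa}$.

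The second step assembles the required homogeneous estimates, which are the classical Kenig--Ponce--Vega bounds for the Airy propagator. For the $L^{\infty}_{t}$ lines~\eqref{105}--\eqref{106} these are the global maximal-function estimate $\|A^{-s}W(t)g\|_{L^{4}_{x}L^{\infty}_{t}}\le C\|g\|_{L^{2}}$ for $s>\tfrac14$ and the Sobolev bound $\|A^{-s}W(t)g\|_{L^{\infty}_{x}L^{\infty}_{t}}\le C\|g\|_{L^{2}}$ for $s>\tfrac12$; for the $L^{2}_{t}$ lines~\eqref{102}--\eqref{103} the input is the sharp Kato smoothing effect $\|AW(t)g\|_{L^{\infty}_{x}L^{2}_{t}}\le C\|g\|_{L^{2}}$ and its $L^{4}_{x}$ companion $\|A^{1/2}W(t)g\|_{L^{4}_{x}L^{2}_{t}}\le C\|g\|_{L^{2}}$. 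These rest on stationary phase / van der Corput applied to the oscillatory kernel $\int e^{i(x\zeta+t\zeta^{3})}\,d\zeta$ of $W(t)$, and I would either prove them this way or quote them directly from \cite{13,4}.

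The third step is the $\tau$-integration, where the two families diverge. For~\eqref{105}--\eqref{106} I would use Minkowski in $\tau$ and then Cauchy--Schwarz,
\[
\|A^{-s}F_{\kappa}\|_{L^{p}_{x}L^{\infty}_{t}}
\le c\int_{\mathbb{R}}\frac{\|A^{-s}W(t)g_{\tau}\|_{L^{p}_{x}L^{\infty}_{t}}}{(1+|\tau|)^{\kappa}}\,d\tau
\le c\Bigl(\int_{\mathbb{R}}\frac{d\tau}{(1+|\tau|)^{2\kappa}}\Bigr)^{1/2}\|f\|_{L^{2}_{\zeta,\eta}},
\]
whose last integral converges exactly for $\kappa>\tfrac12$, matching the hypothesis there. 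Line~\eqref{104} is the most delicate, since a clean global $L^{2}_{x}$ maximal estimate is unavailable; I would instead control $L^{\infty}_{t}$ through the Agmon inequality $\|h\|_{L^{\infty}_{t}}^{2}\lesssim\|h\|_{L^{2}_{t}}\|\partial_{t}h\|_{L^{2}_{t}}$, and since $\partial_{t}\sim\partial_{x}^{3}\sim A^{3}$ along the flow, each power of the modulation weight is paid for by three spatial derivatives, which is precisely the condition $s>3\kappa$. For the two $L^{2}_{t}$ lines the crude triangle inequality would again force $\kappa>\tfrac12$, so the improvement to $\kappa>\tfrac14$ in~\eqref{102}--\eqref{103} has to come from Plancherel in $t$ together with the exact change of variables $\eta=\zeta^{3}$ underlying the smoothing effect; here I would follow \cite{13,4} for the sharp exponent.

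The main obstacle is the second step. Steps one and three are essentially bookkeeping, but the homogeneous smoothing, maximal-function and Sobolev estimates are the genuine oscillatory-integral content, and the sharp derivative counts ($\tfrac12$ in the $L^{4}_{x}L^{2}_{t}$ smoothing, $\tfrac14$ in the $L^{4}_{x}L^{\infty}_{t}$ maximal bound) are where the real difficulty lies. As Lemma~\ref{2.3} is explicitly quoted from \cite{13,4}, in practice I would take those linear estimates as given and write out only the transfer identity and the weighted $\tau$-integration in detail.
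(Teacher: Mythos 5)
The first thing to say is that the paper never proves Lemma \ref{2.3} at all: it is quoted from \cite{13,4} and used as a black box, so the only meaningful benchmark is the argument in those sources, and your proposal is essentially that argument. The modulation identity $F_\kappa(x,t)=c\int e^{it\tau}(1+|\tau|)^{-\kappa}\bigl[W(t)g_\tau\bigr](x)\,d\tau$ with $\int\|g_\tau\|_{L^2}^2\,d\tau=\|f\|_{L^2_{\zeta,\eta}}^2$, followed by Minkowski and Cauchy--Schwarz in $\tau$, is exactly how \eqref{105} and \eqref{106} follow from the Kenig--Ponce--Vega maximal and Sobolev bounds (this is where $\kappa>\tfrac12$ comes from), and your Agmon-type heuristic for \eqref{104} is made rigorous by Sobolev embedding in $t$ together with the elementary inequality $1+|\eta|\le(1+|\eta-\zeta^3|)(1+|\zeta|)^3$, which is precisely where the pairing $s>3\kappa$, $\kappa>\tfrac12$ enters. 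So for the three $L^\infty_t$ estimates your plan is complete modulo the standard homogeneous estimates, exactly as you say.

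The part you deferred to \cite{13,4} deserves to be named precisely, because it is not ``Plancherel in $t$''. For \eqref{102} the missing tool is Stein (complex) interpolation between two endpoints of your own scheme: the Plancherel identity $\|F_0\|_{L^2_xL^2_t}=\|f\|_{L^2_{\zeta,\eta}}$ (no derivative, $\kappa=0$) and the transferred Kato smoothing bound $\|AF_\kappa\|_{L^\infty_xL^2_t}\le C\|f\|_{L^2_{\zeta,\eta}}$ for $\kappa>\tfrac12$; the midpoint of the analytic family gives $A^{1/2}$, $L^4_xL^2_t$, $\kappa>\tfrac14$, i.e.\ \eqref{102}. For \eqref{103}, however, no argument will close the gap, because the statement as printed is false: with $f(\zeta,\eta)=\chi_{[N,N+L/N^{2}]}(\zeta)\,\chi_{[L,2L]}(\eta-\zeta^{3})$ one has $\|f\|_{L^2_{\zeta,\eta}}\sim L/N$, while at $x=0$ and $|t|\lesssim L^{-1}$ the integrand defining $AF_\kappa$ has essentially constant phase, so $|AF_\kappa(0,t)|\gtrsim L^{2-\kappa}/N$ and hence $\|AF_\kappa\|_{L^\infty_xL^2_t}\gtrsim L^{1/2-\kappa}\,\|f\|_{L^2_{\zeta,\eta}}$, which is unbounded as $L\to\infty$ whenever $\kappa<\tfrac12$. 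The correct hypothesis for \eqref{103} is $\kappa>\tfrac12$, which is what the sources state and the only way the estimate is ever invoked in this paper (in Lemma \ref{1.3} it is applied to $F_b$ with $\kappa=b>\tfrac12$, while the $\kappa>\tfrac14$ case is used only for \eqref{102}); the ``$\kappa>\tfrac14$'' in \eqref{103} is a misprint. In short, your route is the right one: it proves everything in the lemma that is true, and the one claim it cannot reach is not reachable.
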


\begin{lemma}\label{1.3}
	Let $ b >\frac{1}{2}  $, $ b ' < - \frac{1}{4} $, and $  s \geq 3b $. Let $ p \in \mathbb{N}$ and suppose $ u_{1},...,u_{p+1}, v_{1},..., v_{p+1}  \in   X_{\rho, s, b}$. Then there exists a constants $ c $   such that
	\begin{equation}\label{eq25}
	\| \partial_{x}\prod _{i=1}^{p} u_{i} \prod _{j=1}^{p+1} v_{j}  \|_{\rho, s, b'}\leq
	C  \prod _{i=1}^{p}\|  u_{i}  \|_{\rho, s, b}. \prod _{j=1}^{p+1}\|  v_{j}  \|_{\rho, s, b},
	\end{equation}
	\begin{equation}\label{eq28}
	\| \partial_{x}\prod _{i=1}^{p+1} u_{i} \prod _{j=1}^{p} v_{j}  \|_{\rho, s, b'}\leq
	C  \prod _{i=1}^{p+1}\|  u_{i}  \|_{\rho, s, b}. \prod _{j=1}^{p}\|  v_{j}  \|_{\rho, s, b}.
	\end{equation}
\end{lemma}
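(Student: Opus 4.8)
The plan is to first strip off the Gevrey weight $e^{\rho(1+|\zeta|)}$ and reduce both estimates to the classical case $\rho=0$, and then to prove the resulting $(2p+1)$-linear estimate in $X_{s,b}$ by moving all derivatives onto the highest-frequency factor and applying the Strichartz-type bounds of Lemma \ref{2.3}. I will only treat \eqref{eq25}, since \eqref{eq28} follows verbatim after interchanging the roles of the $u$'s and $v$'s.

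For the reduction I would use the operator $\Theta$ from the proof of Lemma \ref{lem2}, $\widehat{\Theta u}(\zeta,\eta)=e^{\rho(1+|\zeta|)}\widehat u(\zeta,\eta)$, which is an isometry $\|h\|_{\rho,s,b'}=\|\Theta h\|_{s,b'}$ and commutes with $\partial_x$. Writing $G=\prod_{i=1}^{p}u_i\prod_{j=1}^{p+1}v_j$, the transform $\widehat G$ is a convolution of the factors and the frequencies add, $\zeta=\sum_i\zeta_i+\sum_j\mu_j$. Since $1+|\zeta|\le\sum_i(1+|\zeta_i|)+\sum_j(1+|\mu_j|)$ and $\rho\ge0$, the subadditivity $e^{\rho(1+|\zeta|)}\le\prod_i e^{\rho(1+|\zeta_i|)}\prod_j e^{\rho(1+|\mu_j|)}$ holds inside the convolution. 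Defining $\tilde u_i,\tilde v_j$ by $\widehat{\tilde u_i}=|\widehat{\Theta u_i}|$, $\widehat{\tilde v_j}=|\widehat{\Theta v_j}|$ (so $\|\tilde u_i\|_{s,b}=\|u_i\|_{\rho,s,b}$, etc.), the triangle inequality for the convolution together with this subadditivity gives the pointwise bound $|\widehat{\Theta G}|\le\widehat{\tilde G}$ with $\tilde G=\prod\tilde u_i\prod\tilde v_j$. Hence $\|\partial_x G\|_{\rho,s,b'}=\|\partial_x\Theta G\|_{s,b'}\le\|\partial_x\tilde G\|_{s,b'}$, which reduces \eqref{eq25} to the case $\rho=0$.

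For the $\rho=0$ estimate I would argue directly rather than by duality. Since $b'<0$ one may discard the modulation weight, $(1+|\eta-\zeta^3|)^{2b'}\le1$, and bound $|\zeta|^2\le(1+|\zeta|)^2$, giving $\|\partial_x\tilde G\|_{s,b'}\le\|A^{s+1}\tilde G\|_{L^2_{x,t}}$. The output frequency is controlled by the largest input frequency, so from $1+|\zeta|\lesssim\max_k(1+|\zeta_k|)$ one obtains $(1+|\zeta|)^{s+1}\lesssim\sum_k(1+|\zeta_k|)^{s+1}$, the sum running over the $2p+1$ factors $w_1,\dots,w_{2p+1}$ (the $\tilde u_i,\tilde v_j$). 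Carried through the convolution this yields $\|A^{s+1}\tilde G\|_{L^2_{x,t}}\lesssim\sum_k\|\,(A^{s+1}w_k)\prod_{l\ne k}w_l\,\|_{L^2_{x,t}}$, i.e. all $s+1$ derivatives are placed on one factor at a time. Each summand is then closed by mixed-norm Hölder with Lemma \ref{2.3}: for the distinguished factor I use \eqref{103} with $\kappa=b>\tfrac14$, which tolerates a whole extra derivative, since choosing $f=(1+|\zeta|)^s(1+|\eta-\zeta^3|)^b\widehat{w_k}$ gives $F_b=A^s w_k$ and hence $\|A^{s+1}w_k\|_{L^\infty_xL^2_t}\lesssim\|w_k\|_{s,b}$; one remaining factor is placed in $L^2_xL^\infty_t$ via \eqref{104} (this is where $s\ge 3b$ enters), and the other $2p-1$ factors in $L^\infty_xL^\infty_t$ via \eqref{106} (using $s>\tfrac12$).

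The main obstacle is the exponent bookkeeping in this last step. One must place exactly one factor in $L^2_x$ and the rest in $L^\infty_x$ so that the spatial Hölder exponents sum correctly, $\tfrac1\infty+\tfrac12+(2p-1)\tfrac1\infty=\tfrac12$, while the temporal exponents also sum to $\tfrac12$, namely $\tfrac12+\tfrac1\infty+(2p-1)\tfrac1\infty=\tfrac12$, so that the product lands in $L^2_xL^2_t$ and is bounded by $\prod_i\|u_i\|_{\rho,s,b}\prod_j\|v_j\|_{\rho,s,b}$; summing the finitely many terms then proves \eqref{eq25}. The large regularity forced by $s\ge 3b$ with $b>\tfrac12$ (so $s>\tfrac32$) is exactly what makes this distribution admissible for every $p\ge1$, since it leaves at least one factor available for the $L^\infty_xL^\infty_t$ slot while permitting the single derivative-loss factor to be absorbed by \eqref{103} at the sole cost of $\kappa>\tfrac14$.
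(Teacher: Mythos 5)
Your proof is correct, but it takes a genuinely different route from the paper's. The paper proves \eqref{eq25} by duality: it pairs $\partial_x(u_1v_1v_2)$ against a unit-norm function $m\in L^2(\mathbb{R}^2)$, cancels the Gevrey weights inside that integral by the same subadditivity inequality you invoke, splits the frequency domain into six regions according to which factor carries the largest frequency, and in each region spends the $s+1$ derivatives half on the dual function and half on the dominant factor (via $|\zeta|^{s+1}|\zeta_1|^{-s}\lesssim|\zeta|^{1/2}|\zeta_1|^{1/2}$), closing with the $L^4_xL^2_t$ bound \eqref{102} applied twice --- this is precisely where $b'<-\frac14$ enters, through $\kappa=-b'$ --- plus \eqref{104} and \eqref{106}. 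You instead isolate the Gevrey reduction as a separate first step (the majorants $\widehat{\tilde u_i}=|\widehat{\Theta u_i}|$ reduce everything to $\rho=0$), then avoid duality entirely: you discard the modulation weight using only $b'\le0$, load all $s+1$ derivatives onto the dominant factor via $(1+|\zeta|)^{s+1}\lesssim\sum_k(1+|\zeta_k|)^{s+1}$, and absorb the full derivative loss with the smoothing bound \eqref{103}, treating the remaining factors with \eqref{104} and \eqref{106} just as the paper does. Your version is shorter, handles all $p\ge1$ uniformly through the symmetric sum (no analogue of the paper's ``$((2p+1)+1)!$ regions'' remark), and reveals that the hypothesis $b'<-\frac14$ is not needed for this lemma at all (it matters only elsewhere, e.g.\ to keep $1-b+b'>0$ in the contraction argument); what the paper's even-handed half-half splitting buys is robustness at lower regularity, since placing a whole derivative on a single factor is cruder and succeeds here only because $s\ge 3b>\frac32$ is large. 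Two caveats: you should quote \eqref{103} with $\kappa=b>\frac12$ rather than $\kappa>\frac14$ (the threshold printed in \eqref{103} is evidently a misprint, since interpolating between $L^2_{x,t}$ and the full smoothing estimate gives \eqref{102} at $\kappa>\frac14$, while the $L^\infty_xL^2_t$ bound with a full derivative gain requires $\kappa>\frac12$; your application is safe because $b>\frac12$), and your H\"older bookkeeping needs at least three factors, i.e.\ $p\ge1$ --- the same implicit restriction present in the paper's proof.
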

\begin{proof}
	First of all, for $  i= 1,2,...,p+1$   and $j=1,2,...,p+1 $, we define
	\begin{align*}
	f_{i} (\zeta, \eta ) = (1+  | \zeta |)^{s} (1+  |\eta - \zeta^{3} |)^{b} e^{\rho (1+  | \zeta |)} | \widehat{u_{i} }(\zeta,\eta )|  \\
	g_{j} (\zeta, \eta ) = (1+  | \zeta |)^{s} (1+  |\eta - \zeta^{3} |)^{b} e^{\rho (1+  | \zeta |)  }| \widehat{v_{j} }(\zeta,\eta )|.
	\end{align*}
	The proof is first given for the case  $ p= 1$, after which the proof for a general $ 2p+1 $ will be more transparent,	that means we prove 
	\begin{equation*}
	      \| \partial_{x} u_{1}  v_{1} v_{2}  \|_{\rho, s, b'}\leq
	C \| u_{1}  \|_{\rho, s, b} \|v_{1} \|_{\rho, s, b}  \|v_{2} \|_{\rho, s, b}
	\end{equation*}
	\begin{equation*}
	      \| \partial_{x} u_{1}  u_{2} v_{1}  \|_{\rho, s, b'}\leq
	C \| u_{1}  \|_{\rho, s, b} \|u_{2} \|_{\rho, s, b}  \|v_{1} \|_{\rho, s, b}.
	\end{equation*}
	We have	
	\begin{align*}
 \| \partial_{x} u_{1}  v_{1} v_{2}  \|_{\rho, s, b'}&=  \left\|  (1+  | \zeta |)^{s} (1+  |\eta - \zeta^{3} |)^{b'} e^{\rho (1+| \zeta |)} | \widehat{\partial_{x} u_{1}  v_{1} v_{2} }(\zeta,\eta )|     \right\|_{L^{2}_{\zeta} L^{2}_{\eta} }
\\	\\& = \left\|  (1+  | \zeta |)^{s} e^{\rho (1+| \zeta |)}(1+  |\eta - \zeta^{3} |)^{b'}  |   \zeta| ~~ | \widehat{u_{1}  v_{1} v_{2} }(\zeta,\eta )|     \right\|_{{L^{2}_{\zeta} L^{2}_{\eta} }}\\ \\&	
	= \left\| (1+  | \zeta |)^{s}e^{\rho (1+| \zeta |)} (1+  |\eta - \zeta^{3} |)^{b'}    | \zeta|~~ | \widehat{u_{1}}\ast \widehat{v_{1}} \ast \widehat{v_{2}}(\zeta,\eta )|     \right\|_{L^{2}_{\zeta} L^{2}_{\eta} } \\ \\&
	= \|  (1+  | \zeta |)^{s}e^{\rho (1+| \zeta |)} (1+  |\eta - \zeta^{3} |)^{b'}    | \zeta |  \int_{\mathbb{R}^{4}}  \widehat{u_{1}}(\zeta_{1},\eta_{1} ) \widehat{v_{1}}(\zeta-\zeta_{2},\eta - \eta_{2} )\\ \\& \quad \times
	\widehat{v_{2}}(\zeta_{2}-\zeta_{1},\eta_{2}-\eta_{1} )| d\zeta_{1} d\eta_{1} d\zeta_{2} d\eta_{2}   \|_{{L^{2}_{\zeta} L^{2}_{\eta} }}\\ \\&	
=  \|  (1+  | \zeta |)^{s} e^{\rho (1+| \zeta |)}(1+  |\eta - \zeta^{3} |)^{b'}   | \zeta |  \int_{\mathbb{R}^{4}} ~~
	\left (\frac{(1+  | \zeta_{1} |)^{-s}  e^{-\rho (1+  | \zeta_{1} |)} \widehat{f_{1}}(\zeta_{1},\eta_{1} )}{(1+  | \eta - \zeta^{3} |)^{b}}\right) \\ \\& \quad \times
	\left (\frac{(1+  | \zeta-\zeta_{2}  |)^{-s}  e^{-\rho (1+  |\zeta-\zeta_{2} |)} \widehat{g_{1}}(\zeta-\zeta_{2},\eta - \eta_{2})}{(1+  | ( \eta - \eta_{2}  ) - (\zeta-\zeta_{2})^{3} |)^{b}} \right)\\ \\&\quad \times
	\left (\frac{(1+  |  \zeta_{2} -\zeta_{1} |)^{-s}  e^{-\rho (1+  | \zeta_{2} -\zeta_{1}|)} \widehat{g_{2}}(\zeta_{2} -\zeta_{1},\eta_{2}-\eta_{1} )}{(1+  |\eta_{2}-\eta_{1}  -(\zeta_{2} -\zeta_{1} )^{3} |)^{b}}\right)   d \mu \|_{{L^{2}_{\zeta} L^{2}_{\eta} }},	
	\end{align*}
	where $    d \mu =   d\zeta_{1} d\eta_{1} d\zeta_{2} d\eta_{2}  d\zeta d\eta$.	\\
	 By using the duality, we proof this estimate, where $  m(\zeta,\eta) $  is a positive function in $  L^{2}( \mathbb{R}^{2})$ with norm $ \| m\|_{ L^{2}( \mathbb{R}^{2})}=1   $, then 
	\begin{align*}
	 \| \partial_{x} u_{1}  v_{1} v_{2}  \|_{\rho, s, b'}	& \leqslant \int_{\mathbb{R}^{6}}\frac{e^{\rho (1+| \zeta |)}(1+  | \zeta |)^{s} | \zeta | m(\zeta,\eta)} {(1+  |\eta - \zeta^{3} |)^{-b'}}~~~
	\frac{e^{-\rho (1+  | \zeta_{1} |)}(1+  | \zeta_{1} |)^{-s} f_{1} (\zeta_{1},\eta_{1})} {(1+  |\eta_{1} - \zeta_{1}^{3} |)^{b}} \\ \\&
	\frac{e^{-\rho (1+  | \zeta-\zeta_{2}   |)}(1+  | \zeta-\zeta_{2}  |)^{-s} g_{1}(\zeta-\zeta_{2},\eta-\eta_{2})} {(1+  |\eta -\eta_{2} - (\zeta-\zeta_{2})^{3} |)^{b}}~~
\\	\\&
	\frac{e^{-\rho (1+  | \zeta_{2}-\zeta_{1}   |)}(1+  | \zeta_{2}-\zeta_{1}  |)^{-s} g_{2}(\zeta_{2}-\zeta_{1},\eta_{2}-\eta_{1})} {(1+  |\eta_{2}-\eta_{1} - (\zeta_{2}-\zeta_{1})^{3} |)^{b}}  d \mu.
	\end{align*}
	Using the inequality 
	\begin{equation*}
	| \zeta| \leq   | \zeta_{1}| +  | \zeta- \zeta_{2}| +   |  \zeta_{2}- \zeta_{1}| \quad
\text{then}	\quad
	e^{\rho (1+| \zeta |)}  \leq  e^{\rho (1+  | \zeta_{1} |)}  \times e^{\rho (1+  | \zeta-\zeta_{2}   |)}   \times e^{\rho (1+  | \zeta_{2}-\zeta_{1}   |)}.
	\end{equation*}
	Then 
	\begin{align*}
	 \| \partial_{x} u_{1}  v_{1} v_{2}  \|_{\rho, s, b'}	& \leqslant \int_{\mathbb{R}^{6}}\frac{(1+  | \zeta |)^{s} | \zeta | m(\zeta,\eta)} {(1+  |\eta - \zeta^{3} |)^{-b'}}
	\frac{(1+  | \zeta_{1} |)^{-s} f_{1} (\zeta_{1},\eta_{1})} {(1+  |\eta_{1} - \zeta_{1}^{3} |)^{b}}
	\frac{(1+  | \zeta-\zeta_{2}  |)^{-s} g_{1}(\zeta-\zeta_{2},\eta-\eta_{2})} {(1+  |\eta -\eta_{2} - (\zeta-\zeta_{2})^{3} |)^{b}} \\ \\&
	\frac{(1+  | \zeta_{2}-\zeta_{1}  |)^{-s} g_{2}(\zeta_{2}-\zeta_{1},\eta_{2}-\eta_{1})} {(1+  |\eta_{2}-\eta_{1} - (\zeta_{2}-\zeta_{1})^{3} |)^{b}} d\mu.
	\end{align*}
	Now, split the Fourier space into six regions as follow 
		\begin{enumerate}\label{ta14}
			
		\item $|\zeta-\zeta_{2}  | \leq | \zeta_{2}-\zeta_{1}  |\leq | \zeta_{1} | $
		\item $| \zeta-\zeta_{2}  | \leq| \zeta_{1} | \leq | \zeta_{2}-\zeta_{1}  | $
		\item $ | \zeta_{1} |  \leq | \zeta_{2}-\zeta_{1}  |\leq| \zeta-\zeta_{2}  | $
		\item $ | \zeta_{1} |  \leq  | \zeta-\zeta_{2}  | \leq | \zeta_{2}-\zeta_{1}  | $
		\item $ |  \zeta_{2}-\zeta_{1}  |\leq| \zeta-\zeta_{2}   \leq | \zeta_{1} | $
		\item $ |  \zeta_{2}-\zeta_{1}  | \leq | \zeta_{1} | \leq | \zeta-\zeta_{2}  |. $
	
	\end{enumerate}
	We begin by the case $ (1) $
	$$
	|\zeta-\zeta_{2}  | \leq | \zeta_{2}-\zeta_{1}  |\leq | \zeta_{1} |.
	$$
	Then
	\begin{equation}\label{eq55}
	(1+ |\zeta-\zeta_{2}  |)^{-s} \geq (1+| \zeta_{2}-\zeta_{1}  | )^{-s} \geq (1+ | \zeta_{1} | )^{-s},
	\end{equation}
and,   we assume that $  | \zeta| \leq 1  $
	or    $  | \zeta| \geq 1  $. \\
	 Firstly, by $  | \zeta| \geq 1   $, then
	$$ (1+ | \zeta| )^{s} \leq  (| \zeta|+ | \zeta| )^{s}= 2^{s}(| \zeta|)^{s} = C  (| \zeta|)^{s}.   $$
	 By the last inequality and $  (\ref{eq55})$, we obtain 
	\begin{align*}
		 \| \partial_{x} u_{1}  v_{1} v_{2}  \|_{\rho, s, b'} & \leqslant\int_{\mathbb{R}^{6}}\frac{(1+  | \zeta |)^{s} | \zeta | m(\zeta,\eta)} {(1+  |\eta - \zeta^{3} |)^{-b'}}~
	\frac{(1+  | \zeta_{1} |)^{-s} f_{1} (\zeta_{1},\eta_{1})} {(1+  |\eta_{1} - \zeta_{1}^{3} |)^{b}}~
	\frac{(1+  | \zeta-\zeta_{2}  |)^{-s} g_{1}(\zeta-\zeta_{2},\eta-\eta_{2})} {(1+  |\eta -\eta_{2} - (\zeta-\zeta_{2})^{3} |)^{b}}\\& \quad \times
	\frac{(1+  | \zeta_{2}-\zeta_{1}  |)^{-s} g_{2}(\zeta_{2}-\zeta_{1},\eta_{2}-\eta_{1})} {(1+  |\eta_{2}-\eta_{1} - (\zeta_{2}-\zeta_{1})^{3} |)^{b}}d\mu
	\\&
	\leq C
	\int_{\mathbb{R}^{6}}\frac{( | \zeta |)^{s} | \zeta | m(\zeta,\eta)} {(1+  |\eta - \zeta^{3} |)^{-b'}}~
	\frac{(1+  | \zeta_{1} |)^{-s} f_{1} (\zeta_{1},\eta_{1})} {(1+  |\eta_{1} - \zeta_{1}^{3} |)^{b}}~
	\frac{(1+  | \zeta-\zeta_{2}  |)^{-s} g_{1}(\zeta-\zeta_{2},\eta-\eta_{2})} {(1+  |\eta -\eta_{2} - (\zeta-\zeta_{2})^{3} |)^{b}}~
	\\& \quad \times
	\frac{(1+  | \zeta_{2}-\zeta_{1}  |)^{-s} g_{2}(\zeta_{2}-\zeta_{1},\eta_{2}-\eta_{1})} {(1+  |\eta_{2}-\eta_{1} - (\zeta_{2}-\zeta_{1})^{3} |)^{b}} d\mu \\&
	\leq C
	\int_{\mathbb{R}^{6}}\frac{( | \zeta |)^{s+1}  m(\zeta,\eta)} {(1+  |\eta - \zeta^{3} |)^{-b'}}~~
	\frac{(1+  | \zeta_{1} |)^{-s} f_{1} (\zeta_{1},\eta_{1})} {(1+  |\eta_{1} - \zeta_{1}^{3} |)^{b}}~
	\frac{(1+  | \zeta-\zeta_{2}  |)^{-s} g_{1}(\zeta-\zeta_{2},\eta-\eta_{2})} {(1+  |\eta -\eta_{2} - (\zeta-\zeta_{2})^{3} |)^{b}}~
	\\& \quad \times
	\frac{(1+  | \zeta_{2}-\zeta_{1}  |)^{-s} g_{2}(\zeta_{2}-\zeta_{1},\eta_{2}-\eta_{1})} {(1+  |\eta_{2}-\eta_{1} - (\zeta_{2}-\zeta_{1})^{3} |)^{b}} d\mu,
	\end{align*}
	then
	\begin{align*}
	 	 \| \partial_{x} u_{1}  v_{1} v_{2}  \|_{\rho, s, b'} & \leq C
	\int_{\mathbb{R}^{6}}\frac{( | \zeta |)^{\frac{1}{2}}  m(\zeta,\eta)} {(1+  |\eta - \zeta^{3} |)^{-b'}}
	\frac{(1+  | \zeta_{1} |)^{\frac{1}{2}} f_{1} (\zeta_{1},\eta_{1})} {(1+  |\eta_{1} - \zeta_{1}^{3} |)^{b}}
	\frac{(1+  | \zeta-\zeta_{2}  |)^{-s} g_{1}(\zeta-\zeta_{2},\eta-\eta_{2})} {(1+  |\eta -\eta_{2} - (\zeta-\zeta_{2})^{3} |)^{b}}
	\\&
	\frac{(1+  | \zeta_{2}-\zeta_{1}  |)^{-s} g_{2}(\zeta_{2}-\zeta_{1},\eta_{2}-\eta_{1})} {(1+  |\eta_{2}-\eta_{1} - (\zeta_{2}-\zeta_{1})^{3} |)^{b}} d\mu.
	\end{align*}
	By
	$$
	|\zeta-\zeta_{2}  | \leq | \zeta_{2}-\zeta_{1}  |\leq | \zeta_{1} |,
	$$
	and 
	\begin{align*}
	| \zeta |^{s+1}(1+| \zeta_{1} |)^{-s}& = | \zeta |^{s+1} | \zeta_{1} |^{-s} | \zeta_{1} |^{s} (1+| \zeta_{1} |)^{-s}
	\leq | \zeta |^{s+1} | \zeta_{1} |^{-s}  \frac{|\zeta_{1} |^{s}}{(1+| \zeta_{1} |)^{s} } \leq | \zeta |^{s+1} | \zeta_{1} |^{-s},
	\end{align*}
	and
	\begin{align*}
	  | \zeta |^{s+1} ~ | \zeta_{1}  |^{-s}& = | \zeta |^{\frac{1}{2}} | \zeta_{1}  |^{\frac{1}{2}} | \zeta |^{s+ \frac{1}{2} }| \zeta_{1}  |^{-s- \frac{1}{2}} \\ &\leq c   | \zeta |^{\frac{1}{2}} | \zeta_{1}  |^{\frac{1}{2}}  \left( | \zeta-\zeta_{2}|)^{s+ \frac{1}{2} } +   | \zeta_{2}-\zeta_{1}|^{s+ \frac{1}{2} } + | \zeta_{1}|^{s+ \frac{1}{2}} \right)| \zeta_{1}  |^{-s- \frac{1}{2}} \\
	&\leq c  | \zeta |^{\frac{1}{2}} | \zeta_{1}  |^{\frac{1}{2}}   \left( |\zeta_{1}|)^{s+ \frac{1}{2} } +   | \zeta_{1}|^{s+ \frac{1}{2} } + | \zeta_{1}|^{s+ \frac{1}{2}} \right)| \zeta_{1}  |^{-s- \frac{1}{2}} \\
	& \leq  c  | \zeta |^{\frac{1}{2}} | \zeta_{1}  |^{\frac{1}{2}} \left( 3 |\zeta_{1}  |^{s+\frac{1}{2}}~~| \zeta_{1}  |^{-s- \frac{1}{2}} \right) \\ &
\leq C | \zeta |^{\frac{1}{2}} | \zeta_{1}  |^{\frac{1}{2}}.
	\end{align*}
We suppose that
 \begin{align*}
\widehat{A^{\frac{1}{2}} M_{-b'}}(\zeta,\eta)& = \frac{( | \zeta |)^{\frac{1}{2}}  m(\zeta,\eta)} {(1+  |\eta - \zeta^{3} |)^{-b'}}\\
 \widehat{A^{\frac{1}{2}} F_{b}}(\zeta_{1},\eta_{1})  &=  \frac{(1+  | \zeta_{1} |)^{\frac{1}{2}} f_{1} (\zeta_{1},\eta_{1})} {(1+  |\eta_{1} - \zeta_{1}^{3} |)^{b}}\\
\widehat{A^{-s} G^{1}_{b}}(\zeta-\zeta_{2},\eta-\eta_{2}) &=\frac{(1+  | \zeta-\zeta_{2}  |)^{-s} g_{1}(\zeta-\zeta_{2},\eta-\eta_{2})} {(1+  |\eta -\eta_{2} - (\zeta-\zeta_{2})^{3} |)^{b}}\\
\widehat{A^{-s} G^{2}_{b}}(\zeta_{2}-\zeta_{1},\eta_{2}-\eta_{1})&= \frac{(1+  | \zeta_{2}-\zeta_{1}  |)^{-s} g_{2}(\zeta_{2}-\zeta_{1},\eta_{2}-\eta_{1})} {(1+  |\eta_{2}-\eta_{1} - (\zeta_{2}-\zeta_{1})^{3} |)^{b}},
\end{align*}
and
\begin{align*}
 &\int_{\mathbb{R}^{6}}\frac{( | \zeta |)^{\frac{1}{2}}  m(\zeta,\eta)} {(1+  |\eta - \zeta^{3} |)^{-b'}}~~~~
 \frac{(1+  | \zeta_{1} |)^{\frac{1}{2}} f_{1} (\zeta_{1},\eta_{1})} {(1+  |\eta_{1} - \zeta_{1}^{3} |)^{b}}~
 \frac{(1+  | \zeta-\zeta_{2}  |)^{-s} g_{1}(\zeta-\zeta_{2},\eta-\eta_{2})} {(1+  |\eta -\eta_{2} - (\zeta-\zeta_{2})^{3} |)^{b}}
  \frac{(1+  | \zeta_{2}-\zeta_{1}  |)^{-s} g_{2}(\zeta_{2}-\zeta_{1},\eta_{2}-\eta_{1})} {(1+  |\eta_{2}-\eta_{1} - (\zeta_{2}-\zeta_{1})^{3} |)^{b}} d\mu\\&
  \int_{\mathbb{R}^{6}} \widehat{A^{\frac{1}{2}} M_{-b'}}(\zeta,\eta)   \widehat{A^{\frac{1}{2}} F_{b}}(\zeta_{1},\eta_{1})
\widehat{A^{-s} G^{1}_{b}}(\zeta-\zeta_{2},\eta-\eta_{2})
\widehat{A^{-s} G^{2}_{b}}(\zeta_{2}-\zeta_{1},\eta_{2}-\eta_{1})
         d\mu\\&
= \int_{\mathbb{R}^{2}}  \left(  \widehat{A^{\frac{1}{2}} M_{-b'}}(\zeta,\eta) \right)  \left(  \int_{\mathbb{R}^{4}}  \widehat{A^{\frac{1}{2}} F_{b}}(\zeta_{1},\eta_{1})
\widehat{A^{-s} G^{1}_{b}}(\zeta-\zeta_{2},\eta-\eta_{2})
\widehat{A^{-s} G^{2}_{b}}(\zeta_{2}-\zeta_{1},\eta_{2}-\eta_{1}) d\zeta_{1} d\eta_{1} d\zeta_{2} d\eta_{2} \right)
         d\zeta d\eta
\\&=    \int_{\mathbb{R}^{2}}  \left(  \widehat{A^{\frac{1}{2}} M_{-b'}}(\zeta,\eta) \right)  \left(  \left(  \widehat{A^{\frac{1}{2}} F_{b}}\ast
\widehat{A^{-s} G^{1}_{b}}\ast
\widehat{A^{-s} G^{2}_{b}} \right) (\zeta,\eta) \right)
         d\zeta d\eta
 \\& =   \int_{\mathbb{R}^{2}}  \left(  \widehat{A^{\frac{1}{2}} M_{-b'}}(\zeta,\eta) \right)  \left(  \left(  \widehat{A^{\frac{1}{2}} F_{b}. A^{-s} G^{1}_{b}. A^{-s} G^{2}_{b}}\right) (\zeta,\eta) \right)
         d\zeta d\eta
 \\& =   \int_{\mathbb{R}^{2}}  A^{\frac{1}{2}}  M_{-b'}(x,t) \left(  A^{\frac{1}{2}} F_{b}. A^{-s} G^{1}_{b}. A^{-s} G^{2}_{b} \right) (x,t)  dx dt.
\end{align*}
We suppose that
\begin{align*}
h_{1}(x,t) &= A^{\frac{1}{2}}  M_{-b'}(x,t) \\
h_{2}(x,t) &= A^{\frac{1}{2}} F_{b}(x,t)\\
h_{3}(x,t) &= A^{-s} G^{1}_{b} (x,t)\\
h_{4}(x,t) &=A^{-s} G^{2}_{b}(x,t),
\end{align*} 
 then
 \begin{align*}
  \bigg|\int_{\mathbb{R}^{2}}  A^{\frac{1}{2}}  D_{-b'}(x,t)  A^{\frac{1}{2}} F_{b}. A^{-s} G^{1}_{b}. A^{-s} G^{2}_{b} (x,t)  dx dt\bigg| &= \bigg|\int_{\mathbb{R}^{2}} h_{1}(x,t). h_{2}(x,t)  h_{3}(x,t) h_{4}(x,t) dx dt\bigg|\\&
  \leq \bigg| \int_{\mathbb{R}^{2}} h_{1}(x,t). h_{2}(x,t) \sup_{\substack{ t \in [0, T]}} h_{3}(x,t) \sup_{\substack{ t \in [0, T]
  }} h_{4}(x,t) dx dt\bigg|\\&
\leq \bigg| \int_{\mathbb{R}^{2}}\left(  h_{1}(x,t). h_{2}(x,t)  \right)  \left(  \sup_{\substack{ t \in [0, T]}} h_{3}(x,t) \sup_{\substack{ t \in [0, T]
  }} h_{4}(x,t) \right)  dx dt\bigg|.&
 \end{align*}
By using Cauchy-Schwarz's inequality  for the variables $  x $ and $ t  $
 \begin{align*}
  & \bigg|\int_{\mathbb{R}^{2}}\left(  h_{1}(x,t). h_{2}(x,t)  \right)  \left(  \sup_{\substack{ t \in [0, T]}} h_{3}(x,t) \sup_{\substack{ t \in [0, T]
  }} h_{4}(x,t) \right)  dx dt\bigg| \\ &
\leq
     \| h_{1}(x,t) \|_{L_{x}^{4} L_{t}^{2} }   \|  h_{2}(x,t) \|_{L_{x}^{4} L_{t}^{2} }  \| h_{3}(x,t) \|_{L_{x}^{2} L_{t}^{\infty}}  \| h_{4}(x,t) \|_{L_{x}^{\infty} L_{t}^{\infty} }
  \\&
 =  \| A^{\frac{1}{2}} M_{-b'} \|_{L_{x}^{4} L_{t}^{2} }   \| A^{\frac{1}{2}} F_{b} \|_{L_{x}^{4} L_{t}^{2} }  \| A^{-s}
   G^{1}_{b} \|_{L_{x}^{2} L_{t}^{\infty}}  \| A^{\frac{1}{2}} G^{2}_{b} \|_{L_{x}^{\infty} L_{t}^{\infty} }.
 \end{align*}
Then 	
	\begin{equation*}
	 	 \| \partial_{x} u_{1}  v_{1} v_{2}  \|_{\rho, s, b'}
	\leq  c  \| A^{\frac{1}{2}} M_{-b'} \|_{L_{x}^{4} L_{t}^{2} }   \| A^{\frac{1}{2}} F_{b} \|_{L_{x}^{4} L_{t}^{2} }  \| A^{-s}
	G^{1}_{b} \|_{L_{x}^{2} L_{t}^{\infty}}  \| A^{\frac{1}{2}} G^{2}_{b} \|_{L_{x}^{\infty} L_{t}^{\infty} }.
	\end{equation*}
	Hence by Lemma \ref{2.3}
\begin{align*}
 \| \partial_{x} u_{1}  v_{1} v_{2}  \|_{\rho, s, b'} & \leq  c  \| m \|_{L_{\zeta}^{2} L_{\eta}^{2} }   \| f \|_{L_{\zeta}^{2} L_{\eta}^{2} }  \| g_{1} \|_{L_{\zeta}^{2} L_{\eta}^{2} }   \| g_{2} \|_{L_{\zeta}^{2} L_{\eta}^{2} } \\ &
\leq c\| u_{1}\|_{\rho, s, b}  \| v_{1} \|_{\rho, s, b}  \| v_{2} \|_{\rho, s, b}.
\end{align*}
	Secondly for  the case $  |\zeta| \leq 1 $, then
	\begin{align*}
(1+  | \zeta |)^{s} | \zeta | (1+  | \zeta_{1} |)^{-s} &= 	(1+  | \zeta |)^{^{\frac{1}{2}}} (1+  | \zeta_{1} |)^{^{\frac{1}{2}}}
 (1+  | \zeta_{1} |)^{-s-\frac{1}{2}}(1+  | \zeta |)^{s-\frac{1}{2}}   | \zeta |  \\
&\leq 	(1+  | \zeta |)^{^{\frac{1}{2}}} (1+  | \zeta_{1} |)^{^{\frac{1}{2}}}
1+  | \zeta_{1} |)^{-s-\frac{1}{2}}(1+  | \zeta |)^{s-\frac{1}{2}}   ( 1+ |\zeta | )
 \\
&\leq 	(1+  | \zeta |)^{^{\frac{1}{2}}} (1+  | \zeta_{1} |)^{^{\frac{1}{2}}}(1+  | \zeta_{1} |)^{-s-\frac{1}{2}}
(1+  | \zeta |)^{s+\frac{1}{2}}
\\
 & \leq (1+  | \zeta |)^{^{\frac{1}{2}}} (1+  | \zeta_{1} |)^{^{\frac{1}{2}}}
(1+  | \zeta_{1} |)^{-s-\frac{1}{2}}  (1+  | \zeta_{1}  |  +  | \zeta - \zeta_{2}  | + | \zeta_{2}  - \zeta_{1} |  )^{s+\frac{1}{2}}
 \\& \leq (1+  | \zeta |)^{^{\frac{1}{2}}} (1+  | \zeta_{1} |)^{^{\frac{1}{2}}}
(1+  | \zeta_{1} |)^{-s-\frac{1}{2}}  ( 3(1+  | \zeta_{1}  | )   )^{s+\frac{1}{2}}
 \\
 & \leq  C   (1+  | \zeta |)^{^{\frac{1}{2}}} (1+  | \zeta_{1} |)^{^{\frac{1}{2}}},
	\end{align*}
	then
	\begin{align*}	
	 	 \| \partial_{x} u_{1}  v_{1} v_{2}  \|_{\rho, s, b'} & \leqslant \int_{\mathbb{R}^{6}}\frac{(1+  | \zeta |)^{s} | \zeta | m(\zeta,\eta)} {(1+  |\eta - \zeta^{3} |)^{-b'}}~
	\frac{(1+  | \zeta_{1} |)^{-s} f_{1} (\zeta_{1},\eta_{1})} {(1+  |\eta_{1} - \zeta_{1}^{3} |)^{b}}~
	\frac{(1+  | \zeta-\zeta_{2}  |)^{-s} g_{1}(\zeta-\zeta_{2},\eta-\eta_{2})} {(1+  |\eta -\eta_{2} - (\zeta-\zeta_{2})^{3} |)^{b}}~\\& \quad\times
	\frac{(1+  | \zeta_{2}-\zeta_{1}  |)^{-s} g_{2}(\zeta_{2}-\zeta_{1},\eta_{2}-\eta_{1})} {(1+  |\eta_{2}-\eta_{1} - (\zeta_{2}-\zeta_{1})^{3} |)^{b}}d\mu
	\\&
	\leq C
	\int_{\mathbb{R}^{6}}\frac{  (1+ | \zeta  |)^{\frac{1}{2}} m(\zeta,\eta)} {(1+  |\eta - \zeta^{3} |)^{-b'}}~
	\frac{(1+  | \zeta_{1} |)^{\frac{1}{2}} f_{1} (\zeta_{1},\eta_{1})} {(1+  |\eta_{1} - \zeta_{1}^{3} |)^{b}}~
	\frac{(1+  | \zeta-\zeta_{2}  |)^{-s} g_{1}(\zeta-\zeta_{2},\eta-\eta_{2})} {(1+  |\eta -\eta_{2} - (\zeta-\zeta_{2})^{3} |)^{b}}~
	\\& \quad\times
	\frac{(1+  | \zeta_{2}-\zeta_{1}  |)^{-s} g_{2}(\zeta_{2}-\zeta_{1},\eta_{2}-\eta_{1})} {(1+  |\eta_{2}-\eta_{1} - (\zeta_{2}-\zeta_{1})^{3} |)^{b}} d\mu.
	\end{align*}
	Then, by the inner product, we have
	 \begin{align*}
 \| \partial_{x} u_{1}  v_{1} v_{2}  \|_{\rho, s, b'} & \leq c  \langle \widehat{A^{\frac{1}{2}} M_{-b'}}; \widehat{A^{\frac{1}{2}} F_{b}}\star \widehat{A^{-s} G^{1}_{b}} \star \widehat{A^{-s} G^{2}_{b}}\rangle   \\ &
\leq  c  \langle \widehat{A^{\frac{1}{2}} M_{-b'} }; \widehat{  A^{\frac{1}{2}} F_{b}~.  A^{-s} G^{1}_{b}~A^{-s}G^{2}_{b}}  \rangle \\ &
 \leq c\langle A^{\frac{1}{2}} M_{-b'};  A^{\frac{1}{2}} F_{b}. A^{-s} G^{1}_{b}A^{-s} G^{2}_{b}  \rangle   \\ &
\leq  c  \| A^{\frac{1}{2}} M_{-b'} \|_{L_{x}^{4} L_{t}^{2} }   \| A^{\frac{1}{2}} F_{b} \|_{L_{x}^{4} L_{t}^{2} }  \| A^{-s}
G^{1}_{b} \|_{L_{x}^{2} L_{t}^{\infty}}  \| A^{-s} G^{2}_{b} \|_{L_{x}^{\infty} L_{t}^{\infty} }.
	 \end{align*}
		Hence by Lemma \ref{2.3}
		\begin{align*}
	 	 \| \partial_{x} u_{1}  v_{1} v_{2}  \|_{\rho, s, b'} & \leq  c  \| m \|_{L_{\zeta}^{2} L_{\eta}^{2} }   \| f \|_{L_{\zeta}^{2} L_{\eta}^{2} }  \| g_{1} \|_{L_{\zeta}^{2} L_{\eta}^{2} }   \| g_{2} \|_{L_{\zeta}^{2} L_{\eta}^{2} } \\ &
		 \leq  c    \| u_{1}\|_{\rho, s, b}  \| v_{1} \|_{\rho, s, b}  \| v_{2} \|_{\rho, s, b}.
		 \end{align*}
		By the same way, we prove the inequality in the five region.\\
		 For the case $  p \geq  2 $ is   virtually identical. The only difference is that  we need to split the Fourier space in  $((2p+1)+1)!$.\\
		We prove that
		\begin{align*}
		\| \partial_{x}\prod _{i=1}^{p} u_{i} \prod _{j=1}^{p+1} v_{j}  \|_{\rho, s, b'}\leq
		C  \prod _{i=1}^{p}\|  u_{i}  \|_{\rho, s, b}. \prod _{j=1}^{p+1}\|  v_{j}  \|_{\rho, s, b},
		\end{align*}
		We have :
		\begin{align*}
		\| \partial_{x}\prod _{i=1}^{p} u_{i} \prod _{j=1}^{p+1} v_{j}  \|_{\rho, s, b'} & =  \|  (1+  | \zeta |)^{s} (1+  |\eta - \zeta^{3} |)^{b'} e^{\rho (1+| \zeta |)} |     \widehat{ \partial_{x}\prod _{i=1}^{p} u_{i} \prod _{j=1}^{p+1} v_{j} }(\zeta, \eta  ) \|_{L^{2}_{\zeta} L^{2}_{\eta} },
\\ &=  \|  (1+  | \zeta |)^{s} (1+  |\eta - \zeta^{3} |)^{b'} e^{\rho (1+| \zeta |)}
		|     \zeta \prod _{i=1}^{p} \widehat{ u_{i} }\star \prod _{j=1}^{p+1}  \widehat{v_{j} } (\zeta, \eta  ) \|_{L^{2}_{\zeta} L^{2}_{\eta} }.
		\end{align*}
		By the same way,  by the inner product, we have
		\begin{align*}
		\| \partial_{x}\prod _{i=1}^{p} u_{i} \prod _{j=1}^{p+1} v_{j}  \|_{\rho, s, b'}	& \leq c  \langle \widehat{A^{\frac{1}{2}} M_{-b'}}; \widehat{A^{\frac{1}{2}} F_{b}}\star  \widehat{A^{-s} G^{1}_{b}}  \star  \prod _{i=1}^{p} \widehat{A^{-s} F^{i}_{b}} \star \prod _{j=1}^{p}\widehat{A^{-s} G^{i}_{b}}\rangle \\ &
		\leq c  \langle \widehat{A^{\frac{1}{2}} M_{-b'}}; \widehat{A^{\frac{1}{2}} F_{b}. A^{-s} G^{1}_{b}  \prod _{i=1}^{p} A^{-s} F^{i}_{b}\prod _{j=1}^{p}A^{-s} G^{i}_{b}}\rangle \\ &
		\leq  c  \| A^{\frac{1}{2}} M_{-b'} \|_{L_{x}^{4} L_{t}^{2} }   \| A^{\frac{1}{2}} F_{b} \|_{L_{x}^{4} L_{t}^{2} } \| A^{-s} G^{1}_{b}  \|_{L_{x}^{2} L_{t}^{\infty}}  \| \prod _{i=1}^{p}A^{-s}
		F^{i}_{b} \|_{L_{x}^{\infty} L_{t}^{\infty}}  \| \prod _{j=1}^{p} A^{\frac{1}{2}} G^{i}_{b} \|_{L_{x}^{\infty} L_{t}^{\infty} } \\ &
			\leq  c  \prod _{i=1}^{p}\|  u_{i}  \|_{\rho, s, b}. \prod _{j=1}^{p+1}\|  v_{j}  \|_{\rho, s, b}.
		\end{align*}

	\end{proof}
\begin{lemma}
Let  $\rho > 0 $, $  s \geq 3b $,  $ b >\frac{1}{2}  $, and $ b ' < - \frac{1}{4} $. Let $ p \in \mathbb{N}$ and suppose that $ u_{1},..., u_{p+1} $\\ $, v_{1},...,v_{p+1}  \in X_{\rho, s, b}$. Then there exists a constants $ c $   such that
	\begin{equation*}
	\| \partial_{x}\prod _{i=1}^{p} u_{i} \prod _{j=1}^{p+1} v_{j}  \|_{\rho, s, b'}\leq
	C  \prod _{i=1}^{p}\|  u_{i}  \|_{  s, b}. \prod _{j=1}^{p+1}\|  v_{j}  \|_{ s, b} + c   \prod _{i=1}^{p}\|  u_{i}  \|_{ \rho,  s, b}. \prod _{j=1}^{p+1}\|  v_{j}  \|_{\rho, s, b},
	\end{equation*}
	\begin{equation*}
	\| \partial_{x}\prod _{i=1}^{p+1} u_{i} \prod _{j=1}^{p} v_{j}  \|_{\rho, s, b'}\leq
	C  \prod _{i=1}^{p+1}\|  u_{i}  \|_{  s, b}. \prod _{j=1}^{p}\|  v_{j}  \|_{ s, b} + c   \prod _{i=1}^{p+1}\|  u_{i}  \|_{ \rho,  s, b}. \prod _{j=1}^{p}\|  v_{j}  \|_{\rho, s, b}.
	\end{equation*}
	
\end{lemma}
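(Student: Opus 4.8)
The plan is to reduce both inequalities to the estimate already proved in Lemma \ref{1.3} by splitting the Gevrey weight $e^{\rho(1+|\zeta|)}$ into a trivial part and an analytic correction; this additive splitting is precisely the device one needs later to separate the conserved (pure Sobolev) contribution from the analytic one in the approximate conservation argument behind Theorem \ref{th03}. I treat the first inequality in detail; the second follows by the identical argument with the multiplicities of the families $u_i$ and $v_j$ exchanged (now $p+1$ factors of type $u$ and $p$ of type $v$), since the proof never uses the distinction between the two families.

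First I would write the frequency-side expression for $\|\partial_x\prod_{i=1}^p u_i\prod_{j=1}^{p+1}v_j\|_{\rho,s,b'}$ exactly as in the proof of Lemma \ref{1.3}: after Fourier transform, $\partial_x$ contributes a factor $|\zeta|$, the product becomes the $(2p+1)$-fold convolution $\widehat{u_1}\ast\cdots\ast\widehat{v_{p+1}}$, and the frequencies entering the convolution sum to $\zeta$. The only new ingredient is the elementary decomposition
\begin{equation*}
e^{\rho(1+|\zeta|)} = 1 + \left( e^{\rho(1+|\zeta|)} - 1 \right),
\end{equation*}
which, inserted into the $L^2_{\zeta,\eta}$ norm defining $\|\cdot\|_{\rho,s,b'}$ and followed by the triangle inequality, bounds the left-hand side by the sum of two pieces.

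For the first piece the weight is replaced by $1$, so it equals $\|\partial_x\prod_{i=1}^p u_i\prod_{j=1}^{p+1}v_j\|_{s,b'}$, the non-analytic ($\rho=0$) Bourgain-space norm. Applying Lemma \ref{1.3} with $\rho=0$ — the hypotheses $b>\tfrac12$, $b'<-\tfrac14$, $s\geq 3b$ are unaffected — bounds it by $C\prod_{i=1}^p\|u_i\|_{s,b}\prod_{j=1}^{p+1}\|v_j\|_{s,b}$, the first term on the right. For the second piece I would discard the $-1$ and use sub-additivity of the weight: since $|\zeta|\leq|\zeta_1|+\cdots$ over the $2p+1$ convolution frequencies and there is at least one factor, $e^{\rho(1+|\zeta|)}-1\leq e^{\rho(1+|\zeta|)}\leq\prod e^{\rho(1+|\zeta_i|)}$. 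Distributing these exponentials onto the individual factors reproduces verbatim the weighted integrand estimated in Lemma \ref{1.3}, so the same region decomposition together with the linear estimates of Lemma \ref{2.3} yields the bound $c\prod_{i=1}^p\|u_i\|_{\rho,s,b}\prod_{j=1}^{p+1}\|v_j\|_{\rho,s,b}$.

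The only delicate point is the distribution of the exponential for the correction piece: one must check that after splitting $e^{\rho(1+|\zeta|)}$ across all $2p+1$ factors, each resulting exponential is absorbed into the definition of the corresponding $f_i$ or $g_j$, so that the Gevrey norms $\|u_i\|_{\rho,s,b}$ and $\|v_j\|_{\rho,s,b}$ reappear. This is exactly the computation carried out in Lemma \ref{1.3}, so no genuinely new multilinear estimate is required; the entire content of the statement is the additive splitting of the weight followed by two invocations of Lemma \ref{1.3}, once at $\rho=0$ for the Sobolev term and once at the given $\rho$ for the analytic term.
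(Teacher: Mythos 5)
Your argument is internally consistent and does prove the two inequalities exactly as they are printed, but it follows a genuinely different route from the paper, and the difference is not cosmetic. The paper does not split the weight additively as $1+(e^{\rho(1+|\zeta|)}-1)$; it uses the pointwise bound
\begin{equation*}
e^{\rho(1+|\zeta|)}\ \leq\ e\ +\ \rho^{\frac{1}{2}}\,e^{\rho(1+|\zeta|)}\,(1+|\zeta|)^{\frac{1}{2}},
\end{equation*}
which trades half a power of $\rho$ for half a power of frequency. The dualized integral then splits into $I+I'$: in $I$ the decaying exponentials $e^{-\rho(1+|\cdot|)}$ left on each factor convert the $f_i,g_j$ into plain $X_{s,b}$ data (this corresponds to your first piece, except that the frequency weights are absorbed using the estimates \eqref{103} and \eqref{105} of Lemma \ref{2.3} rather than the pairing used in Lemma \ref{1.3}), while $I'$ carries the explicit prefactor $\rho^{1/2}$ and is estimated exactly as in Lemma \ref{1.3}. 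So what the paper's proof actually delivers is the stronger bound whose second term is $c\,\rho^{1/2}\prod_i\|u_i\|_{\rho,s,b}\prod_j\|v_j\|_{\rho,s,b}$, and it is precisely this $\rho^{1/2}$ --- invisible in the lemma as stated, but used verbatim in the proof of Proposition \ref{pro02}, where the nonlinear term enters as $c\,T^{1/2}\rho(t)^{1/2}\|(\Psi_Tu^n,\Psi_Tv^n)\|^{2p+1}_{\mathcal{B}_{\rho(t),s,b}}$ --- that lets the bootstrap $y\leq x+d\,\rho(T)^{1/2}y^{2p+1}$ close upon choosing $\rho(T)\sim T^{-2p^2-6p-1}$. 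That factor is the engine of Theorem \ref{th03}.

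Your route cannot produce it. Once you discard the $-1$ and estimate $e^{\rho(1+|\zeta|)}-1\leq e^{\rho(1+|\zeta|)}$, your second piece is again the full left-hand side, and its bound via Lemma \ref{1.3} alone already dominates everything; the splitting is then logically superfluous, and your proof really exhibits the stated inequality as a trivial weakening of Lemma \ref{1.3} (add a nonnegative term to its right-hand side). In particular your opening claim, that the additive splitting is ``precisely the device one needs'' for the argument behind Theorem \ref{th03}, is not correct: without smallness in $\rho$ in front of the analytic term, Proposition \ref{pro02} does not close. The repair is cheap, however: keep the $-1$ and use the elementary inequality $e^{x}-1\leq x^{1/2}e^{x}$ for $x\geq 0$ with $x=\rho(1+|\zeta|)$, which reproduces the paper's splitting up to a constant; the extra factor $(1+|\zeta|)^{1/2}$ is then absorbed by the $A^{1/2}$-type estimates of Lemma \ref{2.3} in the same region-by-region analysis you already invoke from Lemma \ref{1.3}.
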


\begin{proof}
	We begin by the case $   p = 1 $, thats mean we prove that 
	\begin{equation}
	\| \partial_{x} ( u_{1}  v_{1}  v_{2}  ) \|_{\rho, s, b'}\leq
	C \|  u_{1}  \|_{  s, b}. \|  v_{1}  \|_{ s, b} \|  v_{2}  \|_{ s, b} + c  \|  u_{1}  \|_{ \rho,  s, b}. \|  v_{1}  \|_{\rho, s, b}  \|  v_{2}  \|_{\rho, s, b}.
	\end{equation}
	We define
	$$
	f_{i} (\zeta, \eta ) = (1+  | \zeta |)^{s} (1+  |\eta - \zeta^{3} |)^{b} e^{\rho (1+  | \zeta |)} | \widehat{u_{i} }(\zeta,\eta )|, $$
	$$
	g_{j} (\zeta, \eta ) = (1+  | \zeta |)^{s} (1+  |\eta - \zeta^{3} |)^{b} e^{\rho (1+  | \zeta |)  }| \widehat{v_{j} }(\zeta,\eta )|. $$
	Then
	\begin{align*}
	\| \partial_{x} u_{1}  v_{1} v_{2}  \|_{\rho, s, b'}&=  \|  (1+  | \zeta |)^{s} (1+  |\eta - \zeta^{3} |)^{b'} e^{\rho (1+| \zeta |)} | \widehat{\partial_{x} u_{1}  v_{1} v_{2} }(\zeta,\eta )|     \|_{L^{2}_{\zeta} L^{2}_{\eta} }  \\ \\ &
	= \|  (1+  | \zeta |)^{s}e^{\rho (1+| \zeta |)} (1+  |\eta - \zeta^{3} |)^{b'}  ~  | \zeta |~~  |\widehat{u_{1}}\ast \widehat{v_{1}} \ast \widehat{v_{2}}(\zeta,\eta )|     \|_{_{L^{2}_{\zeta} L^{2}_{\eta} }}
	\\ \\&
	= \|  (1+  | \zeta |)^{s}e^{\rho (1+| \zeta |)} (1+  |\eta - \zeta^{3} |)^{b'}  ~  | \zeta | ~ \int_{\mathbb{R}^{4}}  \widehat{u_{1}}(\zeta_{1},\eta_{1} ) \widehat{v_{1}}(\zeta-\zeta_{2},\eta - \eta_{2} )
	\\ \\&\quad \times \widehat{v_{2}}(\zeta_{2}-\zeta_{1},\eta_{2}-\eta_{1} )| d\zeta_{1} d\eta_{1} d\zeta_{2} d\eta_{2}   \|_{_{L^{2}_{\zeta} L^{2}_{\eta} }}
	\\ \\ &  = \|  (1+  | \zeta |)^{s} e^{\rho (1+| \zeta |)}(1+  |\eta - \zeta^{3} |)^{b'}   | \zeta |  \int_{\mathbb{R}^{4}} ~~
	\frac{(1+  | \zeta_{1} |)^{-s}  e^{-\rho (1+  | \zeta_{1} |)} \widehat{f_{1}}(\zeta_{1},\eta_{1} )}{(1+  | \eta - \zeta^{3} |)^{b}} \\ \\& \quad \times
	\frac{(1+  | \zeta-\zeta_{2}  |)^{-s}  e^{-\rho (1+  |\zeta-\zeta_{2} |)} \widehat{g_{1}}(\zeta-\zeta_{2},\eta - \eta_{2})}{(1+  | ( \eta - \eta_{2}  ) - (\zeta-\zeta_{2})^{3} |)^{b}} \\ \\& \quad \times
	(\frac{(1+  |  \zeta_{2} -\zeta_{1} |)^{-s}  e^{-\rho (1+  | \zeta_{2} -\zeta_{1}|)} \widehat{g_{2}}(\zeta_{2} -\zeta_{1},\eta_{2}-\eta_{1} )}{(1+  |\eta_{2}-\eta_{1}  -(\zeta_{2} -\zeta_{1},\eta_{2}-\eta_{1} )^{3} |)^{b}}   d \mu \|_{{L^{2}_{\zeta} L^{2}_{\eta} }}.
	\end{align*}
We proof this estimate by the duality. Let $  m(\zeta,\eta) $ be a positive function in $  L^{2}( \mathbb{R}^{2})$ with norm  $ \| m \|_{ L^{2}( \mathbb{R}^{2})}=1   $, then 
	\begin{align*}
	&\int_{\mathbb{R}^{6}}\frac{e^{\rho (1+| \zeta |)}(1+  | \zeta |)^{s} | \zeta | m(\zeta,\eta)} {(1+  |\eta - \zeta^{3} |)^{-b'}}
	\frac{e^{-\rho (1+  | \zeta_{1} |)}(1+  | \zeta_{1} |)^{-s} f_{1} (\zeta_{1},\eta_{1})} {(1+  |\eta_{1} - \zeta_{1}^{3} |)^{b}} \\ \\ &
	\frac{e^{-\rho (1+  | \zeta-\zeta_{2}   |)}(1+  | \zeta-\zeta_{2}  |)^{-s} g_{1}(\zeta-\zeta_{2},\eta-\eta_{2})} {(1+  |\eta -\eta_{2} - (\zeta-\zeta_{2})^{3} |)^{b}}~~~~
	\frac{e^{-\rho (1+  | \zeta_{2}-\zeta_{1}   |)}(1+  | \zeta_{2}-\zeta_{1}  |)^{-s} g_{2}(\zeta_{2}-\zeta_{1},\eta_{2}-\eta_{1})} {(1+  |\eta_{2}-\eta_{1} - (\zeta_{2}-\zeta_{1})^{3} |)^{b}}d\mu.
	\end{align*}
	Using the inequality
	\begin{equation*}
	e^{\rho (1+| \zeta |)}  \leq  e +  \rho^{\frac{1}{2}}  e^{\rho (1+  | \zeta  |)}    (1+  | \zeta  |)^{\frac{1}{2}}.
	\end{equation*}
	Then
	\begin{align*}
	&\int_{\mathbb{R}^{6}}\frac{ e^{\rho (1+  | \zeta  |)}  (1+  | \zeta |)^{1+s}  m(\zeta,\eta)} {(1+  |\eta - \zeta^{3} |)^{-b'}}
	\frac{  e^{-\rho (1+  |\zeta_{1} |)}  (1+  | \zeta_{1} |)^{-s} f_{1} (\zeta_{1},\eta_{1})} {(1+  |\eta_{1} - \zeta_{1}^{3} |)^{b}}\\ \\&
	\frac{  e^{-\rho (1+  |\zeta-\zeta_{2} |)}  (1+  | \zeta-\zeta_{2}  |)^{-s} g_{1}(\zeta-\zeta_{2},\eta-\eta_{2})} {(1+  |\eta -\eta_{2} - (\zeta-\zeta_{2})^{3} |)^{b}}\frac{  e^{-\rho (1+  |\zeta_{2}-\zeta_{1} |)}   (1+  | \zeta_{2}-\zeta_{1}  |)^{-s} g_{2}(\zeta_{2}-\zeta_{1},\eta_{2}-\eta_{1})} {(1+  |\eta_{2}-\eta_{1}-(\zeta_{2}-\zeta_{1})^{3} |)^{b}}\\ \\&
	 \leq I +I^{\prime},
	\end{align*}
	where
	\begin{align*}
	I + I^{\prime}& = e \sup_{m \in B}
	\int_{\mathbb{R}^{6}}\frac{   (1+  | \zeta |)^{1+s}  m(\zeta,\eta)} {(1+  |\eta - \zeta^{3} |)^{-b'}}~~~~
	\frac{  e^{-\rho (1+  |\zeta_{1} |)}  (1+  | \zeta_{1} |)^{-s} f_{1} (\zeta_{1},\eta_{1})} {(1+  |\eta_{1} - \zeta_{1}^{3} |)^{b}}\\ \\& \quad \times
	\frac{  e^{-\rho (1+  |\zeta-\zeta_{2} |)}  (1+  | \zeta-\zeta_{2}  |)^{-s} g_{1}(\zeta-\zeta_{2},\eta-\eta_{2})} {(1+  |\eta -\eta_{2} - (\zeta-\zeta_{2})^{3} |)^{b}}
	\frac{  e^{-\rho (1+  |\zeta_{2}-\zeta_{1} |)}   (1+  | \zeta_{2}-\zeta_{1}  |)^{-s} g_{2}(\zeta_{2}-\zeta_{1},\eta_{2}-\eta_{1})} {(1+  |\eta_{2}-\eta_{1} - (\zeta_{2}-\zeta_{1})^{3} |)^{b}}d\mu
	\\ \\& \quad +  \rho^{\frac{1}{2}} \sup_{m \in B}
	\int_{\mathbb{R}^{6}}\frac{  e^{\rho (1+  | \zeta  |)}    (1+  | \zeta  |)^{\frac{1}{2}}   (1+  | \zeta |)^{1+s}  m(\zeta,\eta)} {(1+  |\eta - \zeta^{3} |)^{-b'}}~~    \frac{  e^{-\rho (1+  |\zeta_{1} |)}  (1+  | \zeta_{1} |)^{-s} f_{1} (\zeta_{1},\eta_{1})} {(1+  |\eta_{1} - \zeta_{1}^{3} |)^{b}}\\ \\& \quad \times
	\frac{  e^{-\rho (1+  |\zeta-\zeta_{2} |)}  (1+  | \zeta-\zeta_{2}  |)^{-s} g_{1}(\zeta-\zeta_{2},\eta-\eta_{2})} {(1+  |\eta -\eta_{2} - (\zeta-\zeta_{2})^{3} |)^{b}}
	\frac{  e^{-\rho (1+  |\zeta_{2}-\zeta_{1} |)}   (1+  | \zeta_{2}-\zeta_{1}  |)^{-s} g_{2}(\zeta_{2}-\zeta_{1},\eta_{2}-\eta_{1})} {(1+  |\eta_{2}-\eta_{1} - (\zeta_{2}-\zeta_{1})^{3} |)^{b}} d\mu.
	\end{align*}
	Now, split the Fourier space into six regions ( the same division as before $ \ref{ta14}  $). We begin by the case $ (1) $
	$
	\left( |\zeta-\zeta_{2}  | \leq | \zeta_{2}-\zeta_{1}  |\leq | \zeta_{1} | \right).$	The integrale of
	$ I $ corresponding to the particular region just delineated can be dominated by the supremum over all $m$ in $B$ of the duality relation the integral can be dominated  by the inner product.
	\begin{equation*}
	\begin{array}{l}
  I \leq c  \langle \widehat{A^{\frac{1}{2}} M_{-b'}}; \widehat{  e^{-\rho A } A F_{b}}\star  \widehat{  e^{-\rho A } A^{-s} G^{1}_{b}}   \star \widehat{ e^{-\rho A}  A^{-s} G^{2}_{b}}\rangle \\ \\
	\leq c  \langle \widehat{A^{\frac{1}{2}} M_{-b'}}; \widehat{  e^{-\rho  A} A F_{b}~. ~  e^{-\rho A} A^{-s} G^{1}_{b}  ~. ~  e^{-\rho A }  A^{-s} G^{2}_{b}}\rangle \\ \\
		\leq c  \langle A^{\frac{1}{2}} M_{-b'};   e^{-\rho A } A F_{b} ~. ~   e^{-\rho  A} A^{-s} G^{1}_{b} ~.~  e^{-\rho A }  A^{-s} G^{2}_{b}\rangle  \\ \\
		\leq  c  \| A^{\frac{1}{2}} M_{-b'} \|_{L_{x}^{4} L_{t}^{2} }   \| e^{-\rho A }  A F_{b} \|_{L_{x}^{\infty} L_{t}^{2} }  \| e^{-\rho A } A^{-s}
		G^{1}_{b} \|_{L_{x}^{2} L_{t}^{\infty}}  \|  e^{-\rho A } A^{-s} G^{2}_{b} \|_{L_{x}^{4} L_{t}^{\infty} }.
	\end{array}
	\end{equation*}
	Hence by Lemma \ref{2.3}
	\begin{equation*}
	\begin{array}{l}
	 I \leq  c  \| m \|_{L_{\zeta}^{2} L_{\eta}^{2} }   \| e^{-\rho A } f_{1} \|_{L_{\zeta}^{2} L_{\eta}^{2} }  \| e^{-\rho A } g_{1} \|_{L_{\zeta}^{2} L_{\eta}^{2} }   \|  e^{-\rho A } g_{2} \|_{L_{\zeta}^{2} L_{\eta}^{2} }
	\leq  c    \| u_{1}\|_{ s, b}  \| v_{1} \|_{ s, b}  \| v_{2} \|_{ s, b}.
	\end{array}
	\end{equation*}	
	By the same way, we treat the second part, that is, the integration  $ I^{\prime} $ and we use the following inequality
	$$  e^{\rho (1+| \zeta |)}  \leq  e^{\rho (1+  | \zeta_{1} |)}  \times e^{\rho (1+  | \zeta-\zeta_{2}   |)}   \times e^{\rho (1+  | \zeta_{2}-\zeta_{1}   |)},
	$$
	we find 
	\begin{equation*}
	\begin{array}{l}
	 I^{\prime}   \leq  c \rho^{\frac{1}{2}}\sup_{\substack{m \in B }}\| A^{\frac{1}{2}} M_{-b'} \|_{L_{x}^{4} L_{t}^{2} }   \|   A^{\frac{1}{2}} F_{b} \|_{L_{x}^{4} L_{t}^{2} }  \|  A^{-s}
	G^{1}_{b} \|_{L_{x}^{2} L_{t}^{\infty}}  \|   A^{-s} G^{2}_{b} \|_{L_{x}^{\infty} L_{t}^{\infty} }  \\ \\
	\leq  c  \| m\|_{L_{\zeta}^{2} L_{\eta}^{2} }   \|  f_{1} \|_{L_{\zeta}^{2} L_{\eta}^{2} }  \|  g_{1} \|_{L_{\zeta}^{2} L_{\eta}^{2} }   \|  g_{2} \|_{L_{\zeta}^{2} L_{\eta}^{2} } \\ \\
	\leq  c    \| u_{1}\|_{\rho, s, b}  \| v_{1} \|_{\rho, s, b}  \| v_{2} \|_{\rho, s, b}.
	\end{array}
	\end{equation*}	
	The other five cases, follow by symmetry.\\
	For the case $  p > 2 $, the same scheme of estimation will yield  for$( p -2) $ with additional factors of the form
	$$  \|   A^{-s}(G_{b}^{i} )  A^{-s}(F_{b}^{i} ) \|_{L_{x}^{\infty}  L_{t}^{\infty}}.     $$
	We deal with the rest of the parts in the same way
\end{proof}

\section{Proof of Theorem \ref{the1.2}}

{\bf Existence of solution.} We define 
\begin{equation*}
\mathcal{B}_{\rho, s, b} = X_{\rho, s, b} \times X_{\rho, s, b}, \quad \quad
\mathcal{N}^{\rho, s}= \mathcal{G}_{\rho, s} \times \mathcal{G}_{\rho, s},
\end{equation*}
\begin{equation*}
\| (u, v) \|_{\mathcal{B}_{\rho, s, b}} =  \max \{\| u \|_{\rho, s, b};  \| u \|_{\rho, s, b} \} \quad  \text{and} \quad \| (u_0, v_0) \|_{\mathcal{N}^{\rho, s}} =  \max \{\|u_0 \|_{\mathcal{G}_{\rho, s}};  \| v_0 \|_{\mathcal{G}_{\rho, s}} \}.
\end{equation*}

\begin{lemma}
	Let $ s\geq 0 $,$  \rho \geq 0 $, $ b > \frac{1}{2}  $ and $   T \in ( 0; 1)$.
	Then, for all $ (u_{0}, v_{0}) \in \mathcal{N}^{\rho, s}  $, the map \\
	$  \varXi \times \Gamma: B(0,R) \longrightarrow B(0,R) $ is a contraction, where  $   B (0, R)$ is given by 
	\begin{equation*}
	\mathbb{B}(0,R) = \{  (u,v) \in \mathcal{B}_{\rho, s, b};\quad \| (u, v) \|_{\mathcal{B}_{\rho, s, b}} \leq R \} \quad \text{where} \quad
	R= 2C\| (u_{0}, v_{0}) \|_{\mathcal{N}^{\rho, s}}.
	\end{equation*}
	
\end{lemma}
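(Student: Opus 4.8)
The plan is to realize the solution as a fixed point of the solution map $\varXi\times\Gamma$ defined in \eqref{p5} and to invoke the Banach fixed-point theorem on the closed ball $\mathbb{B}(0,R)$, which is a complete metric space for the distance induced by $\|\cdot\|_{\mathcal{B}_{\rho,s,b}}$. Two properties must be checked: that $\varXi\times\Gamma$ maps $\mathbb{B}(0,R)$ into itself, and that it is a strict contraction. Throughout, $b>\tfrac12$ and $b'$ are fixed so that the hypotheses of Lemmas \ref{lem1} and \ref{1.3} hold (in particular $s\ge 3b$), and $C$ denotes a constant that may change from line to line but depends only on $s,b,b',p$.

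For the self-mapping property I would estimate $\|\varXi(u,v)\|_{\rho,s,b}$ by splitting it into its linear and Duhamel parts. The linear term is controlled by \eqref{eq1}, giving $\|\psi W(t)u_0\|_{\rho,s,b}\le cT^{1/2}\|u_0\|_{\mathcal{G}_{\rho,s}}$, while the Duhamel term is handled by \eqref{eq4}, giving $cT\|w_1\|_{\rho,s,b'}$ with $w_1=\partial_x(u^p v^{p+1})$. The multilinear estimate \eqref{eq25} of Lemma \ref{1.3} then yields
\[
\|w_1\|_{\rho,s,b'}\le C\,\|u\|_{\rho,s,b}^{\,p}\,\|v\|_{\rho,s,b}^{\,p+1}\le C R^{2p+1}
\]
on $\mathbb{B}(0,R)$, and the analogous bound holds for $\Gamma$ through \eqref{eq28}. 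Taking $C$ in the definition $R=2C\|(u_0,v_0)\|_{\mathcal{N}^{\rho,s}}$ to be the constant from \eqref{eq1} keeps the linear contribution below $R/2$; the nonlinear contribution $cCT R^{2p+1}$ is then pushed below $R/2$ by choosing $T$ small, after which $\|(\varXi,\Gamma)(u,v)\|_{\mathcal{B}_{\rho,s,b}}\le R$.

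For the contraction I would exploit that the linear parts of $\varXi$ and $\Gamma$ cancel for two inputs sharing the same data $(u_0,v_0)$, leaving only Duhamel differences. Writing the difference of nonlinearities as a telescoping sum,
\[
u_1^{\,p}v_1^{\,p+1}-u_2^{\,p}v_2^{\,p+1}
=\sum_{k=1}^{p}\Big(\cdots\Big)(u_1-u_2)\Big(\cdots\Big)
+\sum_{\ell=1}^{p+1}\Big(\cdots\Big)(v_1-v_2)\Big(\cdots\Big),
\]
each summand is a $(2p+1)$-fold product in which exactly one factor is a difference $u_1-u_2$ or $v_1-v_2$ and the remaining $2p$ factors lie in $\mathbb{B}(0,R)$. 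Applying Lemma \ref{1.3} to each summand and summing produces a bound $CR^{2p}\|(u_1-u_2,v_1-v_2)\|_{\mathcal{B}_{\rho,s,b}}$; multiplying by the factor $cT$ from \eqref{eq4} gives a contraction constant $\theta=CTR^{2p}$.

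The main obstacle is to reconcile the two smallness conditions with a single admissible choice of $T$ depending only on $R$, that is on $\|(u_0,v_0)\|_{\mathcal{N}^{\rho,s}}$: one needs $T$ so small that simultaneously $cCTR^{2p+1}\le R/2$ and $\theta=CTR^{2p}<1$. Both are of the form $T\lesssim R^{-2p}$, so such a $T=T(R)>0$ exists, and the Banach fixed-point theorem then furnishes the unique fixed point $(u,v)\in\mathbb{B}(0,R)$ solving \eqref{p01} on $[0,T]$. The only genuinely delicate bookkeeping is verifying that each term of the telescoping decomposition presents the correct number of $u$- and $v$-factors so that it fits the hypotheses of Lemma \ref{1.3}.
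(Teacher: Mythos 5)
Your proposal is correct and follows essentially the same route as the paper: a Banach fixed-point argument on $\mathbb{B}(0,R)$, with the linear part controlled by \eqref{eq1}, the Duhamel part by \eqref{eq4}, the nonlinearity by Lemma \ref{1.3}, cancellation of the linear terms in the difference, and a single smallness condition $T\lesssim R^{-2p}$ closing both the self-mapping and contraction estimates. The only cosmetic difference is that you telescope the difference of nonlinearities fully into $(2p+1)$-fold products each containing one difference factor (so Lemma \ref{1.3} applies verbatim), whereas the paper first splits into $(u^{p}-u^{*p})v^{p+1}+u^{*p}(v^{p+1}-v^{*p+1})$ and then invokes factorization bounds of the form $\|u^{p}-u^{*p}\|_{\rho,s,b}\leq C\|u-u^{*}\|_{\rho,s,b}R^{p-1}$ — which implicitly rest on the same telescoping you made explicit.
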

\begin{proof}
	First it is proved that $  \varXi \times \Gamma $
	is mapping on $ \mathbb{B}(0,R) $ 
	\begin{align*}
	\|  \varXi [u,v](t) \|_{\rho, s, b}
	&=    \| \psi (t) W(t)u_{0}- \psi_{T}(t)\int_{0}^{t} W(t-t^{\prime})w_{1}(t^{\prime}) dt^{\prime} \|_{\rho, s, b}  \\
	&\leq   \| \psi (t) W(t)u_{0} \|_{\rho, s, b}+ \|  \psi_{T}(t)\int_{0}^{t} W(t-t^{\prime})w_{1}(t^{\prime}) dt^{\prime}  \|_{\rho, s, b}\\
	&\leq C \| u_{0} \|_{\mathcal{G}_{\rho, s}} + CT^{1-b+b'}
	\|  w_{1}(t^{\prime}) \|_{\rho, s, b'}\\
	&= C \| u_{0} \|_{\mathcal{G}_{\rho, s}} + CT^{1-b+b'}
	\|  \partial_{x}\left(u^{p} v^{p+1}\right)\|_{\rho, s, b'}.
	\end{align*}
	We use Lemma \ref{1.3} to have 
	\begin{align*}
	\|  \partial_{x} u^{p} v^{p+1}\|_{\rho, s, b'}& \leq C \|  u \|^{p}_{\rho, s, b}    \|  v \|^{p+1}_{\rho, s, b}.
	\end{align*}
	Then 
	\begin{align*}
	\|  \varXi [u,v](t) \|_{\rho, s, b}
	&\leq C \| u_{0} \|_{\mathcal{G}_{\rho, s}} + CT^{1-b+b'}
	\|  u \|^{p}_{\rho, s, b}    \|  v\|^{p+1}_{\rho, s, b} \\
	&\leq C  \max \left( \| u_{0} \|_{\mathcal{G}_{\rho, s}}, \| v_{0} \|_{\mathcal{G}_{\rho, s}}\right)  + CT^{1-b+b'}
	\max \left( \| u \|_{\rho, s, b}, \| v \|_{\rho, s, b}\right)^{p}  \\& \quad \times \max \left( \| u \|_{\rho, s, b}, \| v \|_{\rho, s, b}\right)^{p+1}\\
	&\leq C  \max \left( \| u_{0} \|_{\mathcal{G}_{\rho, s}}, \| v_{0} \|_{\mathcal{G}_{\rho, s}}\right)  + CT^{1-b+b'}
	\max \left( \| u \|_{\rho, s, b}, \| v \|_{\rho, s, b}\right)^{2p+1}.
	\end{align*}
	The estimates for the second term  $ \Gamma $ are similar.
	\begin{align*}
	\|  \Gamma[u,v](t) \|_{\rho, s, b}   &\leq   C \| ( u_{0}, v_{0}) \|_{\mathcal{N}^{\rho, s}}
	+ CT^{1-b+b'}
	\left( \| (u, v) \|_{\mathcal{B}_{\rho, s, b}}\right)^{2p+1}.
	\end{align*}
	Then we have 
	\begin{align*}
	\|  \varXi [u,v](t), \Gamma[u,v](t) \|_{\mathcal{B}_{\rho, s, b}}   &\leq   C \| ( u_{0}, v_{0}) \|_{\mathcal{N}^{\rho, s}}
	+ CT^{1-b+b'}
	\left( \| (u, v) \|_{\mathcal{B}_{\rho, s, b}}\right)^{2p+1}.
	\end{align*}
	Then
	\begin{align*}
	\|  \varXi [u,v](t), \Gamma[u,v](t) \|_{\mathcal{B}_{\rho, s, b}}     &\leq  C \| ( u_{0}, v_{0}) \|_{\mathcal{N}^{\rho, s}}
	+ CT^{1-b+b'}
	\left( \| (u, v) \|_{\mathcal{B}_{\rho, s, b}}\right)^{2p+1} \\
	& \leq \frac{R}{2} +T^{\epsilon} C R^{2p+1}.
	\end{align*}
	We choose sufficiently small $T  $ such that
	\begin{equation*}
	T^{\epsilon}  \leq \frac{1}{4CR^{2p}}.
	\end{equation*}
	Hence
	\begin{equation*}
	\|  \varXi [u, v](t), \Gamma[u, v](t) \|_{\mathcal{B}_{\rho, s, b}} \leq R    \quad, \forall
	(u, v) \in \mathbb{B}(0, R).
	\end{equation*}
	Secondly we proof that  the map $  \varXi \times \Gamma: \mathbb{B}(0,R) \longrightarrow \mathbb{B}(0,R) $ is a contraction.\\	
	For this end, let  $  (u, v) \in \mathbb{B}(0,R)   $  and $  (u^{*}, v^{*}) \in \mathbb{B}(0,R) $ such that
	\begin{align*}
	\|  \varXi [u,v](t) - \varXi [u^{*},v^{*}](t) \|_{\rho, s, b} &= C\| \psi_{T}(t)\int_{0}^{t} W(t-t^{\prime})  \partial_{x}\left(u^{p} v^{p+1}-u^{*p} v^{*p+1}   \right)dt^{\prime}\|_{\rho, s, b} \\ \\
	&=C\| \psi_{T}(t)\int_{0}^{t} W(t-t^{\prime})  \partial_{x}\left[(u^{p}-u^{*p}) v^{p+1}+u^{*p} (v^{p+1}-v^{*p+1})   \right]dt^{\prime}\|_{\rho, s, b}.
	\end{align*}
	We use the Lemma  \ref{1.3} to have
		\begin{equation*}\label{p4}
		\begin{array}{l}
		\|  \partial_{x}\left(u^{p} - u^{*p}\right)v^{p+1} \|_{\rho, s, b}  \leq C   \| u^{p} - u^{*p}\|_{\rho, s, b'}   \| v^{p+1} \|_{\rho, s, b}
		\\ \\
		
		\|  \partial_{x}  u^{*p} \left( v^{p+1}- v^{*p+1}  \right)\|_{\rho, s, b'}	 \leq C  \|   u^{*p} \|_{\rho, s, b} \|   \left( v^{p+1}- v^{*p+1}  \right)\|_{\rho, s, b}.
		\end{array}
		\end{equation*}
	According to Lemma \ref{1.3}, we have  \\
	\begin{equation*}
	\begin{array}{l}
	\|  \left(u^{p} - u^{*p}\right) \|_{\rho, s, b}  \leq C   \|  \left(u - u^{*}\right)\|_{\rho, s, b}   R^{p-1}
		\\ \\
	\|   \left( v^{p+1}- v^{*p+1}  \right)\|_{\rho, s, b}\leq C  \|   \left( v- v^{*}  \right)\|_{\rho, s, b} R^{p}.
	\end{array}
	\end{equation*}
	Then
	\begin{equation*}
	\begin{array}{ll}
	\|  \partial_{x}\left(u^{p} - u^{*p}\right)v^{p+1} \|_{\rho, s, b'}
	& \leq C   \|  \left(u^{p} - u^{*p}\right)\|_{\rho, s, b}   \| v \|^{p+1}_{\rho, s, b} \\ \\
	
	&\leq C   \|  \left(u - u^{*}\right)\|_{\rho, s, b}   R^{p-1}  R^{p+1}
	\\ \\
	&\leq CR^{2p} \max \left(  \|(u - (u^{*})\|_{\rho, s, b},  \| v- v^{*}  \|_{\rho, s, b} \right)\\ \\
	&= CR^{2p} \|  (u- u^{*}),v- v^{*} \|_{\mathcal{B}_{\rho, s, b}},
	\end{array}
	\end{equation*}
	and
	\begin{equation*}\label{p7}
	\begin{array}{ll}
	\|  \partial_{x} u^{*p} \left( v^{p+1}- v^{*p+1}  \right)\|_{\rho, s, b}	
	\leq C R^{2p} \|  (u- u^{*},v- v^{*} \|_{\mathcal{B}_{\rho, s, b}},
	\end{array}
	\end{equation*}
	and
		\begin{equation*}
		\|  \varXi [u,v](t) - \varXi [u^{*},v^{*}](t) \|_{\rho, s, b} \leq  2CT^{1-b+b'}R^{2p} \|  u- u^{*},v- v^{*} \|_{\mathcal{B}_{\rho, s, b}},
		\end{equation*}
		\begin{equation*}
		\|  \Gamma [u,v](t) - \Gamma  [u^{*},v^{*}](t) \|_{\rho, s, b} \leq  2CT^{1-b+b'}R^{2p} \|  u- u^{*},v- v^{*} \|_{\mathcal{B}_{\rho, s, b}}.
		\end{equation*}
	By the same way we prove that $\Gamma [u,v](t) $
	is contraction, so we have
	\begin{align*}
	\|  \varXi [u,v](t) - \varXi [u^{*},v^{*}], \Gamma [u,v](t) - \Gamma [u^{*},v^{*}]\|_{\mathcal{B}_{\rho, s, b}} \\	
	\leq 2 C  ~T^{\epsilon} R^{2p} \|  u- u^{*}, v- v^{*})\|_{\mathcal{B}_{\rho, s, b}}.
	\end{align*}
	Since $ T^{\epsilon}  \leq \frac{1}{4CR^{2p}}$, we have
	\begin{align*}
	\|  \varXi [u,v] - \varXi [u^{*},v^{*}](t), \Gamma [u,v] - \Gamma [u^{*},v^{*}](t)\|_{\mathcal{B}_{\rho, s, b}} \\
	\leq \frac{1}{2} \|  (u- (u^{*})
	,v- v^{*} \|_{\mathcal{B}_{\rho, s, b}}.
	\end{align*}
	Since   the map $  \varXi \times \Gamma: \mathbb{B}(0,R) \longrightarrow \mathbb{B}(0,R) $ is a contraction, it follows that   has a unique fixed point $(u,v)$  in $B(0, R)$.
\end{proof}
The rest of the proof follows a standard argument.
\section{Large time estimates on the radius of analyticity.}
\begin{lemma}\label{lem23}
	Let $   s > \frac{3}{2}  $,$  \rho > 0 $, $ T\geq 1 $ and  $  b \in [-1, 1  ]$. We suppose that $ ( u,v ) $ is solution of $ (\ref{p01} )$ on the time interval $   [0, 2T   ]$. Then there exists a constants $ C $ such that 
		\begin{equation}\label{eq250}
		\| (\psi_{T}(t) u(.,t), \psi_{T}(t) v(.,t)) \|_{\mathcal{B}_{s,b}}\leq C T^{\frac{1}{2}} \left( 1+ \lambda_{T} (u,v)    \right)^{2p+1},
		\end{equation}
		and
		\begin{equation}\label{eq201}
		\| \psi_{T}(t) u(.,t), \psi_{T}(t) v(.,t) \|_{\mathcal{B}_{\rho,s,b}}\leq CT^{\frac{1}{2}} \left( 1+\kappa_{T} (u,v)    \right)^{{2p+1}},
		\end{equation}
		with
		\begin{equation*}
		\lambda_{T} (u,v)= \sup_{\substack{ t \in [0, 2T]
		}}  \left(    \| u, v   \|_{\mathcal{N}^{s+1}}  \right)	 \quad \textit{and} \quad  \kappa_{T} (u,v)= \sup_{\substack{ t \in [0, 2T]
		}}  \left(    \| u, v   \|_{\mathcal{N}^{\rho, s+1}}  \right),
		\end{equation*}	
		where $\mathcal{N}^{s}=H^{s}\times H^{s}$ and $\mathcal{B}_{b,s}=X_{b,s}\times X_{b,s} $.
\end{lemma}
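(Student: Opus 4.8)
The plan is to bound the space-time norm $\|\psi_T u\|_{X_{s,b}}$ by the pointwise-in-time regularity of the solution, exploiting the equation to control the endpoint $b=1$ and then interpolating to reach every $b\in[-1,1]$. I will carry out the argument for the single component $\psi_T u$; the bound for $\psi_T v$ is identical, and taking the maximum of the two gives the product-space estimate. Estimates \eqref{eq250} and \eqref{eq201} differ only by the analytic weight $e^{\rho(1+|\zeta|)}$, so I would treat \eqref{eq250} in detail and indicate the purely notational changes for \eqref{eq201} at the end. Note that this approach uses the evolution equation directly rather than the Duhamel formula, so the delicate bilinear estimates are not needed here.

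First I would record the two endpoint bounds. For $b=0$ one has $\|\psi_T u\|_{X_{s,0}} = \|\psi_T u\|_{L^2_t H^s_x}$, and since $\psi_T$ is supported in $[-2T,2T]$ with $\|\psi_T\|_{L^2_t}\le CT^{1/2}$, this is at most $CT^{1/2}\sup_{[0,2T]}\|u(t)\|_{H^s}$. For $b=1$ I would use that the Fourier multiplier with symbol $|\eta-\zeta^3|$ is exactly $|\partial_t+\partial_x^3|$, so that $\|\psi_T u\|_{X_{s,1}}\le C\big(\|\psi_T u\|_{L^2_tH^s_x}+\|(\partial_t+\partial_x^3)(\psi_T u)\|_{L^2_tH^s_x}\big)$. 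Applying the Leibniz rule and the equation \eqref{p01},
$$(\partial_t+\partial_x^3)(\psi_T u)=\psi_T'\,u-\psi_T\,\partial_x\!\left(u^{p}v^{p+1}\right),$$
so that, using $\|\psi_T'\|_{L^2_t}\le CT^{-1/2}$ and $\|\psi_T\|_{L^2_t}\le CT^{1/2}$,
$$\|(\partial_t+\partial_x^3)(\psi_T u)\|_{L^2_tH^s_x}\le CT^{-1/2}\sup_{[0,2T]}\|u\|_{H^s}+CT^{1/2}\sup_{[0,2T]}\big\|u^{p}v^{p+1}\big\|_{H^{s+1}}.$$

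The key step is the product estimate for the last factor. Since $s+1>5/2>\frac12$, the space $H^{s+1}$ is a Banach algebra, whence $\|u^{p}v^{p+1}\|_{H^{s+1}}\le C\|u\|_{H^{s+1}}^{p}\|v\|_{H^{s+1}}^{p+1}\le C(1+\lambda_T(u,v))^{2p+1}$; this is precisely where the loss of one derivative, hence the appearance of $\mathcal{N}^{s+1}$, and the homogeneity $2p+1$ originate. Combining the displays and using $T\ge 1$ (so that $T^{-1/2}\le T^{1/2}$) gives $\|\psi_T u\|_{X_{s,1}}\le CT^{1/2}(1+\lambda_T)^{2p+1}$, and the same bound holds at $b=0$. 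For general $b\in[0,1]$ I would then interpolate: since $\|\cdot\|_{X_{s,b}}$ is log-convex in $b$ (Hölder in the weighted $L^2_{\zeta,\eta}$), $\|\psi_T u\|_{X_{s,b}}\le\|\psi_T u\|_{X_{s,0}}^{1-b}\|\psi_T u\|_{X_{s,1}}^{b}\le CT^{1/2}(1+\lambda_T)^{2p+1}$. For $b\in[-1,0)$ the weight $(1+|\eta-\zeta^3|)^{b}\le 1$ gives $\|\psi_T u\|_{X_{s,b}}\le\|\psi_T u\|_{X_{s,0}}$, so this range requires nothing new. Repeating for $v$ and taking the maximum proves \eqref{eq250}.

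For \eqref{eq201} the only change is to insert the weight $e^{\rho(1+|\zeta|)}$ throughout, replacing $H^{s+1}$ by $\mathcal{G}_{\rho,s+1}$. The single additional fact needed is that $\mathcal{G}_{\rho,s+1}$ is again a Banach algebra, which follows from the algebra property of $H^{s+1}$ together with the submultiplicativity $e^{\rho(1+|\zeta|)}\le e^{\rho(1+|\zeta_1|)}e^{\rho(1+|\zeta-\zeta_1|)}$ coming from $|\zeta|\le|\zeta_1|+|\zeta-\zeta_1|$ under convolution. With this in hand the argument proceeds verbatim, yielding $\|u^{p}v^{p+1}\|_{\mathcal{G}_{\rho,s+1}}\le C(1+\kappa_T)^{2p+1}$ and hence \eqref{eq201}. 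I expect the main obstacle to be bookkeeping rather than genuine analysis: cleanly verifying the endpoint identity $\|\psi_T u\|_{X_{s,1}}\simeq\|\psi_T u\|_{L^2_tH^s}+\|(\partial_t+\partial_x^3)(\psi_T u)\|_{L^2_tH^s}$ and tracking the $T$-powers generated by $\psi_T$ and $\psi_T'$, since the algebra estimates at regularity $s+1$ entirely bypass the subtle multilinear estimates used at regularity $s$.
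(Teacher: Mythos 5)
Your proof is correct and is essentially the paper's own argument: the paper writes the $X_{s,b}$-norm as $\Lambda^{b}$ applied to $e^{-it\zeta^{3}}\psi_{T}(t)\widehat{u}^{x}(\zeta,t)$ and bounds $\Lambda^{b}$ by ``function plus time derivative,'' which is exactly your $b=1$ multiplier identity for $\partial_{t}+\partial_{x}^{3}$ after conjugation by the free group, followed by the same use of the equation (the linear part cancels, leaving $\psi_{T}'u$ and $\psi_{T}\partial_{x}(u^{p}v^{p+1})$) and the algebra property of $H^{s+1}$ to produce $CT^{1/2}(1+\lambda_{T})^{2p+1}$. Your interpolation step for $b\in[0,1]$ is harmless but not needed, since $(1+|\eta-\zeta^{3}|)^{b}$ is monotone in $b$ — this is how the paper covers all $b\in[-1,1]$ in one stroke — and your explicit treatment of \eqref{eq201} via submultiplicativity of $e^{\rho(1+|\zeta|)}$ supplies a case the paper asserts without proof.
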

\begin{proof} We have
	\begin{align*}
	\| \psi_{T}(t) u(x,t) \|^{2}_{s,b} & 	=\int_{-\infty}^{+\infty}  \left(1+| \zeta |  \right)^{2s}    \int_{-\infty}^{+\infty} \bigg| \Lambda^{b} \left(  e^{-it\zeta^{3}} \psi_{T}(t) \widehat{u}^{x}(\zeta,t)   \right)\bigg|^{2} dt d \zeta.
	\end{align*}
	By using  the inquality
	$$ |\Lambda^{b}v(x,t)| \leq c   |v(x,t) |+ |\partial_{t}v(x,t) |,   $$
	we get
	\begin{align*}
	\| \psi_{T}(t) u(.,t) \|^{2}_{s,b} &\leq  c \int_{-\infty}^{+\infty}  \left(1+| \zeta |  \right)^{2s}    \int_{-\infty}^{+\infty}\bigg | \left(  e^{-it\zeta ^{3}} | \psi_{T}(t)  \widehat{u}^{x}  (\zeta,t)   \right)\bigg|^{2} dt d \zeta
	\\& \quad
	+ c \int_{-\infty}^{+\infty}  \left(1+| \zeta |  \right)^{2s}    \int_{-\infty}^{+\infty} \bigg|\partial_{t} \left(  e^{-it(\zeta )^{3}} | \psi_{T}(t)  \widehat{u}^{x}  (\zeta,t)   \right)\bigg|^{2} dt d \zeta.
	\end{align*}\\
	We have 	
	\begin{align*}
	\partial_{t} \left(  e^{-it\zeta ^{3}}  \psi_{T}(t)  \widehat{u}^{x} (\zeta,t)   \right)= &\frac{1}{T}\psi^{\prime}_{ T}(t) e^{-it\zeta ^{3}}\widehat{u}^{x} (\zeta,t)  + \psi_{ T}(t)  (  -i\zeta ^{3})e^{-it(\zeta )^{3}}\widehat{u}^{x} (\zeta,t)  \\ & + \psi_{ T}(t) e^{-it\zeta ^{3}}\widehat{u}^{x}_{t} (\zeta,t),
	\end{align*}
	and
	\begin{equation*}
	  u_{t}= -\partial_{x}^{3} u-\partial_{x}\left(u^{p} v^{p+1}\right).
	\end{equation*}	
	Then
	\begin{align*}
	\widehat{u}^{x}_{t} (\zeta,t) & = - \widehat{\partial^{3}_{x}u }^{x} (\zeta,t) - \widehat{\partial_{x}\left(u^{p} v^{p+1}\right)}^{x} (\zeta,t)
	\\&
	= i\zeta^{3}\widehat{u}^{x}(\zeta,t) -i \zeta  \widehat{\left(u^{p} v^{p+1}\right)  }^{x}(\zeta,t).
	\end{align*}
	So
	\begin{equation*}
	\partial_{t} \left(  e^{-it(\zeta )^{3}}  \psi_{T}(t)  \widehat{  u }^{x}(\zeta,t)   \right)= \frac{1}{T}\psi^{\prime}_{ T}(t) e^{-it\zeta ^{3}} \widehat{u}^{x} (\zeta,t)  +  \psi_{ T}(t) e^{-it\zeta ^{3}}  i \zeta   \widehat{  ( u^{p} v^{p+1}  )  }^{x} (\zeta,t),
	\end{equation*} 
	and
	\begin{align*}
	\| \psi_{T}(t) u(.,t) \|^{2}_{s,b}& = \int_{-\infty}^{+\infty}  \left(1+| \zeta |  \right)^{2s}    \int_{-\infty}^{+\infty}\bigg | \Lambda^{b} \left(  e^{-it\zeta^{3}}  \psi_{T}(t)  \widehat{u}^{x} (\zeta,t)   \right)\bigg |^{2} dt d \zeta
	\\&
	\leq  c \int_{-\infty}^{+\infty}  \left(1+| \zeta |  \right)^{2s}    \int_{-\infty}^{+\infty} \bigg | \left(  e^{-it\zeta ^{3}}  \psi_{T}(t)  \widehat{u}^{x} (\zeta,t)   \right)\bigg |^{2} dt d \zeta
	\\& \quad
	+ c \int_{-\infty}^{+\infty}  \left(1+| \zeta |  \right)^{2s}    \int_{-\infty}^{+\infty} \bigg | \left(  e^{-it\zeta ^{3}}   \frac{1}{T}\psi^{\prime}_{T}(t)  \widehat{u}^{x} (\zeta,t)   \right)\bigg |^{2} dt d \zeta
	\\& \quad
	+ c \int_{-\infty}^{+\infty}  \left(1+| \zeta |  \right)^{2s}    \int_{-\infty}^{+\infty} \bigg | \left(  e^{-it\zeta^{3}}  \psi_{T}(t)(i \zeta ) \widehat{u^{p} v^{p+1}}^{x} (\zeta,t)   \right)\bigg |^{2} dt d \zeta
	\\
	&\leq 2 c \int_{-\infty}^{+\infty}  \left(1+| \zeta |  \right)^{2s}    \int_{0}^{+2T} | \left(  \widehat{u}^{x}  (\zeta,t)   \right)|^{2} dt d \zeta
	+ c \int_{-\infty}^{+\infty}  \left(1+| \zeta |  \right)^{2s}   \\& \quad \times  \int_{0}^{2T} \bigg | \left(  |\zeta | \widehat{ u^{p}  v^{p+1} }^{x}   (\zeta,t)   \right)\bigg |^{2} dt d \zeta,	
	\end{align*}
	and
	\begin{align*}
	\| \psi_{T}(t) u(.,t) \|^{2}_{s,b}&\leq  4 c T \sup_{\substack{t \in [0, 2T]}}\| u(., t)\|^{2}_{H^{s}} + 2cT \sup_{\substack{ t \in [0, 2T]}}( \| u^{p}v^{p+1}  \|^{2}_{H^{s+1}})
	\\ &
	\leq 4 c T \sup_{\substack{t \in [0, 2T]}}\| u(., t)\|^{2}_{H^{s}} + 2cT \sup_{\substack{ t \in [0, 2T]}} (\| u^{p} \|_{H^{s+1}}  \| v^{p+1}  \|_{H^{s+1}} )^{2}
		\\ &
	\leq  4 c T \sup_{\substack{t \in [0, 2T]}}(\| (u, v)\|_{\mathcal{N}^{s}} )^{2} + 2cT \sup_{\substack{ t \in [0, 2T]}} ((\| (u, v )\|_{\mathcal{N}^{s+1}} )^{2p+1} )^{2},
	\end{align*}
	and
	\begin{equation*}
	\| \psi_{T}(t) u(.,t) \|_{s,b} \leq cT^{\frac{1}{2}} \left( 1+\lambda_{T} (u,v)    \right)^{2p+1},
	\end{equation*}
	where
	$$    \lambda_{T} (u,v) =   \sup_{\substack{ t \in [0, 2T]}} (\| (u, v )\|_{\mathcal{N}^{s+1}} ).   $$
	Similarity,
	$$	\| \psi_{T}(t) v(.,t) \|_{s,b} \leq cT^{\frac{1}{2}} \left( 1+\lambda_{T} (u,v)    \right)^{2p+1}, $$
	and
	\begin{align*}
 \| (\psi_{T}(t) u(.,t), \psi_{T}(t) v(.,t)) \|_{s,b} & \leq 2c~T^{\frac{1}{2}} \left( 1+\lambda_{T} (u;v)    \right)^{2p+1}\\
	&\leq CT^{\frac{1}{2}} \left( 1+\lambda_{T} (u;v)    \right)^{2p+1}.
	\end{align*}
This complets the proof.
\end{proof}
To prove the Theorem \ref{th03}, we need  to define a sequence of approximations to $  (\ref{p01})$ as follows 
\begin{equation}
\left\{\begin{array}{l}\label{eq10}
u^{n}_{t}+\partial_{x}^{3} u^{n}=-\partial_{x}\left((  \rho_{n}\ast \psi_{T} u^{n})^{p}) (  \rho_{n}\ast \psi_{T} v^{n})^{p+1})\right), \\
v^{n}_{t}+\partial_{x}^{3} v^{n}= -\partial_{x}\left(( \rho_{n}\ast \psi_{T} u^{n})^{p+1}) (  \rho_{n}\ast \psi_{T} v^{n})^{p})\right), \quad x, t \in \mathbb{R}, p \in \mathbb{Z}^{+} \\
u^{n}(x, 0)=u_{0}(x), \quad v^{n}(x, 0)=v_{0}(x),
\end{array}\right.
\end{equation}
where $ T > 0 $, $ n \in \mathbb{N}$
and   $ \rho_{n} $ is defined as
\begin{equation*}
\widehat{\rho}_{n}(\zeta) =
\left\{\begin{array}{l}
0,  \quad | \zeta | \geq 2n
\\
1, \quad | \zeta | \leq n,
\end{array}\right.
\end{equation*}
where $\widehat{\rho}_{n}  $ is smooth and monotone on $  ( n, 2n)$.
\begin{lemma}\label{lem043}
	Let $  s \geq 0  $ and $  (u_{0},v_{0} ) \in \mathcal{N}^{s} $ and we assume that $( u, v)  $  is solution  of $ (\ref{p01} )$ with  $  (u_{0},v_{0} )$.  Then for $ n \in \mathbb{N}$, we have
	\begin{itemize}
		\item  $  (u^{n}, v^{n} )$ is in $ C([0,2T], H^{s}   ) \times C([0,2T], H^{s}   )   $. The sequence $  \{ (u^{n },v^{n} )\} $ converge  to $  (u,v)  $ \\
		in
		$ C([0,T], H^{s}   ) \times C([0,T], H^{s}   )  $.
		\item  The estimate in  Lemma \ref{lem23} holds for  $   (u^{n},v^{n} )  $ uniformly in $ n $.
		\item If $  (u_{0}, v_{0} ) \in \mathcal{N}^{\rho, s} $ for $ \rho > 0  $, then
		the result is also given for $ C([0,T], \mathcal{G}_{\rho, s}   ) \times  C([0,T], \mathcal{G}_{\rho, s}   ).  $
	\end{itemize}
\end{lemma}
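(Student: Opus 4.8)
The plan is to view the regularized system \eqref{eq10} as an ordinary differential equation in $H^s\times H^s$, to extract bounds that are uniform in $n$ from Lemma \ref{lem23}, and then to pass to the limit $n\to\infty$ using $\widehat{\rho}_n\to1$. For the existence part of the first bullet, observe that since $\widehat{\rho}_n$ is supported in $\{|\zeta|\le 2n\}$, the functions $\rho_n\ast\psi_T u^n$ and $\rho_n\ast\psi_T v^n$ are band-limited; consequently the map
\[
(u,v)\longmapsto \partial_x\big((\rho_n\ast\psi_T u)^p(\rho_n\ast\psi_T v)^{p+1}\big)
\]
is a locally Lipschitz (indeed polynomial) map of $H^s\times H^s$ into itself, with constants depending on $n$. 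Writing \eqref{eq10} in Duhamel form and running the contraction already employed in the proof of Theorem \ref{the1.2} produces a unique local solution; a standard continuation argument, fed by the a priori control established in the second bullet, then extends it to all of $[0,2T]$, so that $(u^n,v^n)\in C([0,2T],H^s)\times C([0,2T],H^s)$.

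Next I would prove the uniform estimates of the second bullet, which is the heart of the matter. The crucial observation is that $\widehat{\rho}_n$ is a Fourier multiplier with $0\le\widehat{\rho}_n\le1$, so that $\|\rho_n\ast f\|_{H^\sigma}\le\|f\|_{H^\sigma}$ for every $\sigma$, while $|\psi_T|\le1$. Repeating the computation of Lemma \ref{lem23} for $(u^n,v^n)$ — differentiating $e^{-it\zeta^3}\psi_T\widehat{u^n}^{\,x}$ in $t$ and substituting $\widehat{u^n}^{\,x}_{t}$ from \eqref{eq10} — the nonlinear contribution is governed by
\[
\big\|(\rho_n\ast\psi_T u^n)^p(\rho_n\ast\psi_T v^n)^{p+1}\big\|_{H^{s+1}}\le \|u^n\|_{H^{s+1}}^{p}\,\|v^n\|_{H^{s+1}}^{p+1},
\]
where the multiplier bound was used to discard $\rho_n$ and $\psi_T$. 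Hence every constant in \eqref{eq250} and \eqref{eq201} is independent of $n$, and both estimates hold for $(u^n,v^n)$ uniformly in $n$.

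With these uniform bounds at hand, I would return to the convergence assertion of the first bullet. Since $\psi_T\equiv1$ on $[0,T]$, the only discrepancy between \eqref{eq10} and \eqref{p01} there is the mollifier inside the nonlinearity. Subtracting the two Duhamel formulas and splitting the difference of nonlinearities exactly as in the contraction estimate of Section 4 — writing $\rho_n\ast u^n = u+\rho_n\ast(u^n-u)+(\mathrm{Id}-\rho_n)u$, and likewise for $v$ — one bounds $\sup_{[0,T]}(\|u^n-u\|_{H^s}+\|v^n-v\|_{H^s})$ by a Gronwall term of the form $CTR^{2p}\sup_{[0,T]}(\|u^n-u\|_{H^s}+\|v^n-v\|_{H^s})$ plus a forcing term controlled by $\|(\mathrm{Id}-\rho_n)u\|_{H^s}+\|(\mathrm{Id}-\rho_n)v\|_{H^s}$. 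Because $\widehat{\rho}_n\to1$ pointwise with $|\widehat{\rho}_n|\le1$, dominated convergence sends the forcing term to $0$, and Gronwall then gives convergence in $C([0,T],H^s)\times C([0,T],H^s)$; uniqueness of the solution furnished by Theorem \ref{the1.2} identifies the limit with $(u,v)$.

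Finally, the third bullet follows by conjugating with the operator $\Theta$ of Lemma \ref{lem2}, $\widehat{\Theta u}^{\,x}=e^{\rho(1+|\zeta|)}\widehat{u}^{\,x}$, which is an isometry $\mathcal{G}_{\rho,s}\to H^s$ and $X_{\rho,s,b}\to X_{s,b}$ and commutes with both $\rho_n$, a multiplier in $\zeta$, and $\psi_T$, a function of $t$. The only genuinely analytic ingredient is the multilinear estimate underlying \eqref{eq201}, whose uniform-in-$n$ form is obtained exactly as above by means of the subadditivity $e^{\rho(1+|\zeta|)}\le\prod_i e^{\rho(1+|\zeta_i|)}$ that drives Lemma \ref{1.3}; with it the arguments for the first two bullets transfer line by line to $\mathcal{G}_{\rho,s}$. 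I expect the main obstacle to be the convergence step of the first bullet, where one must absorb the difference of the truncated nonlinearities into a Gronwall inequality and verify that the mollifier error tends to zero uniformly on $[0,T]$ — and it is precisely the uniform bounds of the second bullet that make this closure possible.
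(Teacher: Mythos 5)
The paper never actually proves Lemma \ref{lem043}: it is stated and immediately followed by Lemma \ref{lem4}, the burden being silently delegated to the references \cite{130,13,4}. So there is no in-paper argument to compare yours against; what you propose is precisely the standard Bona--Gruji\'c approximation scheme that those references carry out. Your second and third bullets are handled correctly: since $0\le\widehat{\rho}_n\le1$, convolution with $\rho_n$ has operator norm at most $1$ on every $H^\sigma$, $X_{s,b}$ and $\mathcal{G}_{\rho,s}$, and $|\psi_T|\le1$, so rerunning the computation of Lemma \ref{lem23} after substituting $\widehat{u^n}^{\,x}_t$ from \eqref{eq10} gives \eqref{eq250} and \eqref{eq201} for $(u^n,v^n)$ with constants independent of $n$; the Gevrey transfer via the subadditivity $e^{\rho(1+|\zeta|)}\le e^{\rho(1+|\zeta_1|)}e^{\rho(1+|\zeta-\zeta_2|)}e^{\rho(1+|\zeta_2-\zeta_1|)}$ is likewise the right mechanism (direct conjugation by $\Theta$ alone cannot work, as $\Theta$ does not commute with products, and you correctly do not rely on it for the nonlinearity).

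The genuine gap is in the first bullet, at the continuation step. You extend the local solution of \eqref{eq10} to $[0,2T]$ ``fed by the a priori control established in the second bullet,'' but that estimate is not a priori: it bounds $\|\psi_T u^n\|_{X_{s,b}}$ by $\sup_{t\in[0,2T]}\|(u^n,v^n)\|_{\mathcal{N}^{s+1}}$, i.e.\ by the very quantity whose finiteness a continuation argument must produce, so the reasoning is circular. Two standard repairs exist. Either place the mollifier \emph{outside} the nonlinearity as well (as in \cite{130}); then, writing $U=\rho_n\ast\psi_Tu^n$ and $V=\rho_n\ast\psi_Tv^n$, the identity
\begin{equation*}
\int_{\mathbb{R}}\bigl(U\,\partial_x(U^pV^{p+1})+V\,\partial_x(U^{p+1}V^p)\bigr)\,dx=0
\end{equation*}
yields $\frac{d}{dt}\bigl(\|u^n\|_{L^2}^2+\|v^n\|_{L^2}^2\bigr)=0$, and Bernstein's inequality (the nonlinearity lives in $|\zeta|\le 2n(2p+1)$) converts this into a bound global on $[0,2T]$; note this cancellation is \emph{not} available for \eqref{eq10} exactly as displayed, with the mollifier only inside. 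Or run your difference estimate and the continuation simultaneously as a bootstrap: for $n$ large, on any subinterval where $u^n$ exists and stays within distance $1$ of the given global solution $u$, the Gronwall step improves this to distance $o(1)$, so $\|u^n(t)\|_{H^s}$ cannot blow up and the solution extends. Finally, that Gronwall step must be closed in the restricted Bourgain norms (Lemma \ref{1.3} plus Lemma \ref{lem2}), not directly in $C([0,T],H^s)$: in plain Sobolev norms the bound $\|\partial_x(f_1\cdots f_{2p+1})\|_{H^s}\lesssim\prod_j\|f_j\|_{H^{s+1}}$ loses one derivative and the loop does not close.
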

\begin{lemma}\label{lem4} (\cite{130})
	Let $ (u, v)   $ be solution of $(\ref{p01}) $  with the initial data $(u_{0},v_{0} )\in \mathcal{N}^{\rho_{0}, s+1}$ for $ \rho_{0} > 0  $ and $ s >\frac{3}{2}    $ and $\eta > 0   $, then
	\begin{align*}
	\sup_{\substack{ t \in [0, 2\eta]}} \Vert (u(., t), v(., t))           \Vert_{\mathcal{N}^{\rho(t), s+1}} \leq \Vert (u_{0},v_{0} )  \Vert_{\mathcal{N}^{\rho_{0}, s+1}}  + C \eta^{\frac{1}{2}}  \sup_{\substack{ t \in [0, 2\eta]}} \Vert (u(., t), v(., t))           \Vert^{(2p+2)/2}_{ \mathcal{N}^{s+1}},
	\end{align*}
	with $   \rho (t)= \rho_{0} e^{-\gamma (t)}        $
	and $\gamma (t)$ is defined as
	$$     \gamma (t)= \int_{0}^{t} \left( k_{1}   + k_{2} \int_{0}^{t^{\prime}}  \Vert \left(  u(., t^{\prime \prime}    ), v(., t^{\prime \prime})   \right)         \Vert^{2p+2}_{ \mathcal{N}^{s+1}} dt^{\prime \prime} \right)^{2p}  dt^{\prime},       $$
	where
	$$    k_{1}=       \Vert (u_{0},v_{0} )  \Vert^{2}_{\mathcal{N}^{\rho_{0}, s+1}} ,  $$ and $ k_{2}  $ is a constant.
\end{lemma}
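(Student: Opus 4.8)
The plan is to run an energy estimate directly on the analytic norm with a \emph{moving} radius $\rho(t)$, choosing the rate of decay of $\rho$ so that the derivative loss produced by the nonlinearity is compensated. Write $G(t)^2=\|(u(t),v(t))\|_{\mathcal{N}^{\rho(t),s+1}}^2$ and $N(t)=\|(u(t),v(t))\|_{\mathcal{N}^{s+1}}$, and let $A_{\rho}$ denote the Fourier multiplier with symbol $e^{\rho(1+|\zeta|)}(1+|\zeta|)^{s+1}$, so that $G^2=\|A_\rho u\|_{L^2}^2+\|A_\rho v\|_{L^2}^2$. Differentiating $G^2$ in $t$ and using $\partial_\rho e^{\rho(1+|\zeta|)}=(1+|\zeta|)e^{\rho(1+|\zeta|)}$ produces two contributions for each component: a weight term $2\dot\rho(t)\,\|(1+|\zeta|)^{1/2}A_\rho u\|_{L^2}^2$ coming from the $t$-dependence of the exponential, together with the dynamical term $2\,\mathrm{Re}\,\langle A_\rho\partial_t u,A_\rho u\rangle$. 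Since $\rho(t)=\rho_0 e^{-\gamma(t)}$ is decreasing, the weight term is \emph{negative} and is the only source of smoothing available to absorb the nonlinearity.

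First I would dispose of the dispersive part: $-\partial_x^3$ is the skew-adjoint multiplier $i\zeta^3$ and commutes with the radial weight $A_\rho$, so $\mathrm{Re}\,\langle A_\rho\partial_x^3u,A_\rho u\rangle=0$ identically, and only $\partial_x(u^pv^{p+1})$ (resp. $\partial_x(u^{p+1}v^p)$) survives. The core of the argument is then to control $\mathrm{Re}\,\langle A_\rho\partial_x(u^pv^{p+1}),A_\rho u\rangle$. At the regularity level $s+1$ the factor $\partial_x$ represents one lost derivative; I would recover it by combining an integration by parts, which shifts part of the derivative onto the $A_\rho u$ slot and generates the half-derivative weight $(1+|\zeta|)^{1/2}$ matching the boundary term, with the elementary splitting $e^{\rho(1+|\zeta|)}\le e+\rho^{1/2}(1+|\zeta|)^{1/2}e^{\rho(1+|\zeta|)}$ already exploited in Section~3. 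Applied on the output frequency, this splitting breaks the pairing into (i) a purely Sobolev piece carrying no exponential gain, controlled by the convolution/algebra estimates of the same type as in the proof of Lemma~\ref{1.3} (with $\rho=0$) by a quantity $\lesssim N^{2p+2}$, and (ii) a top-order analytic piece carrying the extra $\rho^{1/2}(1+|\zeta|)^{1/2}$, which after the convolution bounds and the subadditivity $e^{\rho(1+|\zeta|)}\le\prod_i e^{\rho(1+|\zeta_i|)}$ is dominated by $C\rho^{1/2}\,\|(1+|\zeta|)^{1/2}A_\rho u\|_{L^2}\,G^{2p}$.

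With these two pieces in hand, the decisive step is to prescribe $\gamma$ — equivalently $\dot\rho=-\dot\gamma\,\rho$ — so that the negative boundary term $2\dot\rho\|(1+|\zeta|)^{1/2}A_\rho u\|_{L^2}^2$ absorbs, via Young's inequality, the top-order piece (ii); balancing $\rho^{1/2}\|(1+|\zeta|)^{1/2}A_\rho u\|\,G^{2p}$ against $|\dot\rho|\,\|(1+|\zeta|)^{1/2}A_\rho u\|^2$ forces $\dot\gamma$ to be comparable to $G^{4p}$, and since the integrated energy inequality bounds $G^2$ by $k_1+k_2\int_0^t N^{2p+2}\,dt''$, this is exactly how the exponents $2p$ and $2p+2$ and the constant $k_1=\|(u_0,v_0)\|_{\mathcal{N}^{\rho_0,s+1}}^2$ enter the nested-integral form of $\gamma(t)$, and where $\rho(t)=\rho_0e^{-\gamma(t)}$ is forced. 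After absorption one is left with a differential inequality of the schematic form $\frac{d}{dt}G^2\lesssim N^{2p+2}$; integrating over $[0,2\eta]$ gives $G(t)^2\le G(0)^2+C\eta\sup_{[0,2\eta]}N^{2p+2}$, and the elementary bound $\sqrt{a^2+b}\le a+\sqrt b$ yields $G(t)\le\|(u_0,v_0)\|_{\mathcal{N}^{\rho_0,s+1}}+C\eta^{1/2}\sup N^{(2p+2)/2}$ with $(2p+2)/2=p+1$, which is the claimed estimate. The main obstacle is precisely the nonlinear bookkeeping: one must check that the single derivative lost to $\partial_x$ is genuinely split into two half-derivatives, one recovered from the weight splitting and one from the boundary term, that the commutator/fractional-Leibniz terms arising in the Sobolev piece are controlled at level $s+1$ (which uses $s>\tfrac32$), and that the resulting admissible decay rate $\dot\gamma$ is both large enough to absorb (ii) and finite — this consistency being encoded in the nested-integral structure of $\gamma(t)$.
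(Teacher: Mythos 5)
The paper itself contains no proof of Lemma \ref{lem4}: the statement is imported (adapted to the coupled system) from Bona--Gruji\'c \cite{130}, so there is no internal argument to compare against line by line. Your overall architecture --- an energy estimate on the analytic norm with a moving radius $\rho(t)=\rho_{0}e^{-\gamma(t)}$, the dispersive term dropping out by skew-adjointness of $\partial_{x}^{3}$, and absorption of the top-order nonlinear contribution into the negative term generated by $\dot\rho<0$ --- is indeed the method of that reference, so in spirit you are reconstructing the intended proof. As written, however, the sketch has gaps that prevent it from closing.

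First, differentiating $G(t)^{2}=\|(u(t),v(t))\|^{2}_{\mathcal{N}^{\rho(t),s+1}}$ in time presupposes that this quantity is finite and differentiable for $t>0$, which is essentially the conclusion of the lemma; a rigorous implementation must run the estimate on regularized solutions and pass to the limit, which is precisely the role of the mollified system \eqref{eq10} and Lemma \ref{lem043} in the paper. Second, your key bound for the analytic piece, $C\rho^{1/2}\|(1+|\zeta|)^{1/2}A_{\rho}u\|_{L^{2}}\,G^{2p}$, is inconsistent in homogeneity: the pairing $\langle A_{\rho}\partial_{x}(u^{p}v^{p+1}),A_{\rho}u\rangle$ is of degree $2p+2$ in the solution, while your expression is of degree $2p+1$, so a factor is missing and the balancing that allegedly ``forces $\dot\gamma\sim G^{4p}$'' cannot be checked as stated. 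Third, and most substantively, the absorption step is circular: dominating the Young remainder requires $\dot\gamma\gtrsim G^{4p}$, but $G$ is the unknown being estimated; the lemma defines $\gamma$ through the \emph{Sobolev} norm $N$ (via $k_{1}+k_{2}\int_{0}^{t'}N^{2p+2}\,dt''$) exactly so that the shrinkage rate is a priori finite, and converting that definition into $\dot\gamma\ge G^{4p}$ requires a continuity/bootstrap argument (show $G^{2}\le k_{1}+k_{2}\int_{0}^{t}N^{2p+2}\,dt''$ persists on a maximal interval) that your sketch never sets up. Finally, even granting the bootstrap, Young's inequality leaves a remainder of size roughly $G^{4p+2}/\dot\gamma\sim G^{2}$, so the differential inequality you actually obtain is $\frac{d}{dt}G^{2}\lesssim N^{2p+2}+G^{2}$ rather than $\frac{d}{dt}G^{2}\lesssim N^{2p+2}$; direct integration then produces a Gronwall factor $e^{C\eta}$ that the stated estimate does not contain, so your last step does not follow as claimed without showing how this remainder is absorbed.
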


\begin{proposition}\label{pro02}
	Let $ \rho_{0} > 0 $, $  p \geqslant 1   $,$  T \geqslant 1$    and $  s > 3b  $, we assume  that $( u, v)     $
	is solution of $ (\ref{p01})   $ in $   C \left( [0, 2T], H^{s+1}  \right) \times  C \left( [0, 2T], H^{s+1}  \right)                              $
	with $ ( u_{0}, v_{0} )  \in  \mathcal{G}_{\rho_{0}, s+1} \times \mathcal{G}_{\rho_{0}, s+1}    $, then  there exist  $   \rho_{1} < \rho_{0}   $ such that
	
	$$      \lbrace \Psi_{T} u^{n}, \Psi_{T} v^{n} \rbrace \quad  \text{bounded in} \quad  \mathcal{B}_{\rho (t), s,b},  $$
	with
	$$ \rho (t) \leq \min \lbrace   \rho_{1},K T ^{-2p^{2} -6p-1}  \rbrace . $$
	
\end{proposition}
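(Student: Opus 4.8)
The plan is to pass to the regularized problem \eqref{eq10} and use Lemma \ref{lem043} to transfer every estimate to the smooth approximations $(u^n,v^n)$ uniformly in $n$; it then suffices to bound $\{\Psi_T u^n,\Psi_T v^n\}$ in $\mathcal{B}_{\rho(t),s,b}$ uniformly in $n$, since the limit $(u,v)$ inherits the bound. By the analytic large-time estimate \eqref{eq201} of Lemma \ref{lem23}, which by Lemma \ref{lem043} holds for $(u^n,v^n)$ with constants independent of $n$, this reduces to controlling the Gevrey--Sobolev quantity $\kappa_{T}(u^n,v^n)=\sup_{t\in[0,2T]}\|(u^n,v^n)\|_{\mathcal{N}^{\rho(t),s+1}}$ for a suitable radius $\rho(t)$. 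Concretely, \eqref{eq201} gives $\|(\Psi_T u^n,\Psi_T v^n)\|_{\mathcal{B}_{\rho(t),s,b}}\le CT^{1/2}\bigl(1+\kappa_T(u^n,v^n)\bigr)^{2p+1}$, so a uniform bound on $\kappa_T$ at a radius $\rho(t)$ obeying $\rho(t)\le\min\{\rho_1,KT^{-(2p^2+6p+1)}\}$ is exactly what is required.

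To bound $\kappa_T$ I would iterate Lemma \ref{lem4} over a partition of $[0,2T]$ into short subintervals of length $\eta$. On each subinterval the $\eta^{1/2}$-gain in Lemma \ref{lem4} drives a continuity (bootstrap) argument: choosing $\eta$ small enough, depending on the a priori Sobolev bound $M:=\sup_{[0,2T]}\|(u^n,v^n)\|_{\mathcal{N}^{s+1}}$ (finite because the solution lies in $C([0,2T],H^{s+1})$, with the lower-order norms controlled by the conserved quantities listed in the Introduction), the nonlinear correction $C\eta^{1/2}\sup\|(u^n,v^n)\|^{p+1}_{\mathcal{N}^{s+1}}$ keeps the Gevrey--Sobolev norm below $2\|(u_0,v_0)\|_{\mathcal{N}^{\rho_0,s+1}}$ throughout the step. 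The cost is that the radius is multiplied by $e^{-\Delta\gamma}$ on each step, $\Delta\gamma$ being the increment of the weight $\gamma(t)$ from Lemma \ref{lem4}, so that after $N\sim T/\eta$ steps one has $\rho(2T)=\rho_0\,e^{-\sum_j\Delta\gamma_j}$.

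The final radius is then obtained by summing these increments and optimizing the step length $\eta$ against $M$ and the nonlinearity degree $2p+2$. The bootstrap forces the scaling $\eta\sim M^{-2p}$, which fixes the number of steps $N\sim TM^{2p}$; combining this with the $2p$-th power appearing inside $\gamma$ and the growth of the inner time-integral of $\|(u^n,v^n)\|^{2p+2}_{\mathcal{N}^{s+1}}$ is what pins down the exponent $2p^2+6p+1$. One then sets $\rho(t)$ to be this lower envelope, picks any $\rho_1<\rho_0$, and reads off the claimed bound $\rho(t)\le\min\{\rho_1,KT^{-(2p^2+6p+1)}\}$, with the uniform boundedness in $\mathcal{B}_{\rho(t),s,b}$ following from the displayed estimate of Step one.

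The hard part will be precisely this accumulation: showing that $\sum_j\Delta\gamma_j$ produces an \emph{algebraic} rather than an exponential lower bound on $\rho(2T)$. Using only the crude bound $\|(u,v)\|_{\mathcal{N}^{s+1}}\le M$ makes the integrand of $\gamma$ bounded below on each step and yields $\gamma(2T)\gtrsim T$, i.e. merely exponential decay of the radius; the algebraic rate can survive only if the per-step loss genuinely decreases as the radius shrinks. The technical heart of the argument is therefore to exploit the dependence of the Gevrey constant $k_1=\|(u_0,v_0)\|^2_{\mathcal{N}^{\rho_0,s+1}}$ (and of $\Delta\gamma$) on the current, shrinking radius so that the losses effectively telescope to $O(\log T)$, thereby converting $e^{-\sum_j\Delta\gamma_j}$ into the power $T^{-(2p^2+6p+1)}$.
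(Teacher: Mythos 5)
Your plan has a genuine gap, and it is exactly the one you flag at the end: the entire algebraic bound $\rho(t)\leq KT^{-2p^{2}-6p-1}$ rests on the unproven claim that the per-step radius losses in your iteration of Lemma \ref{lem4} ``telescope to $O(\log T)$.'' As you yourself compute, iterating Lemma \ref{lem4} over $N\sim T/\eta$ subintervals with the crude a priori bound $M$ gives $\rho(2T)=\rho_{0}e^{-\sum_{j}\Delta\gamma_{j}}$ with $\sum_{j}\Delta\gamma_{j}\gtrsim T$, i.e.\ exponential decay of the radius; nothing in the stated lemmas supplies the mechanism by which the losses would shrink as the radius shrinks (the constant $k_{1}$ in Lemma \ref{lem4} is tied to the \emph{initial} radius $\rho_{0}$, and $\gamma$ is monotone increasing in $t$ regardless). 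So your proposal reduces the proposition to a statement that is strictly harder than the proposition itself, and the exponent $2p^{2}+6p+1$ is never actually derived -- the scaling heuristics $\eta\sim M^{-2p}$, $N\sim TM^{2p}$ do not produce it.

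The paper's proof avoids this trap by never iterating Lemma \ref{lem4} in time: that lemma is invoked exactly once, on the unit interval $[0,2]$, only to define the constant $M^{\ast}$. The algebraic decay comes from a different mechanism. One writes the Duhamel formula for $\psi_{T}u^{n}$ in the \emph{analytic} Bourgain space and applies the second multilinear estimate of Section 3 -- the one built on $e^{\rho(1+|\zeta|)}\leq e+\rho^{\frac{1}{2}}e^{\rho(1+|\zeta|)}(1+|\zeta|)^{\frac{1}{2}}$ -- whose crucial feature is that the analytic norms enter with the small prefactor $\rho^{\frac{1}{2}}$, while the radius-free part is controlled by the non-analytic large-time estimate \eqref{eq250} of Lemma \ref{lem23} (no radius loss there). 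This yields the closed inequality $y\leq x+d\,\rho(T)^{\frac{1}{2}}y^{2p+1}$ with $y=\|(\psi_{T}u^{n},\psi_{T}v^{n})\|_{\mathcal{B}_{\rho(t),s,b}}$, $d=cT^{\frac{1}{2}}$, and $x$ growing only \emph{algebraically} in $T$ (like $T^{\frac{p+3}{2}}(1+\alpha_{T})^{(2p+1)^{2}}$). Choosing $\rho(T)=a^{2}/(d^{2}x^{4p}2^{4p})$ makes a continuity (two-branch) argument close, giving $y\leq 2x$, and the claimed exponent then falls out by direct bookkeeping: $x^{4p}\gtrsim T^{2p^{2}+6p}$ and $d^{2}\sim T$ give $\rho(T)\leq KT^{-2p^{2}-6p-1}$. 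In short, the radius is \emph{chosen} to absorb the nonlinearity in one stroke, rather than being eroded step by step; without that $\rho^{\frac{1}{2}}$-weighted estimate (or an equivalent substitute), your iteration scheme cannot reach an algebraic lower bound.
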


\begin{proof}
	We have
	\begin{align}
	\psi_{T}(t) u^{n} = \psi_{T}(t) W(t) u_{0}- \psi_{T}(t) \int_{0}^{t} \partial_{x}\left(( \rho_{n}\ast \psi_{T} u^{n})^{p+1}) (  \rho_{n}\ast \psi_{T} v^{n})^{p})\right) dx,
	\end{align}
	where $    t \in (0, \infty )$.  This will show that $\Psi_{T} u^{n}    \in  X_{\rho,s,b} $ for all $ n  \in \mathbb{N}$. \\
	We have
	\begin{align*}
	\|  \psi_{T}(t) u^{n}   \|_{\rho, s,b} &\leq  \|  \psi_{T}(t) W(t) u_{0}   \|_{\rho, s,b} +  \| \psi_{T}(t) \int_{0}^{t} \partial_{x}\left( \rho_{n}\ast \psi_{T} (u^{n})^{p+1}) (  \rho_{n}\ast \psi_{T} (v^{n})^{p} \right)   \|_{\rho, s,b}\\&
	\leq c T^{\frac{1}{2}} \|  u_{0} \|_{\mathcal{G}_{\rho, s} } +c T \|  \partial_{x}\left( ( \rho_{n}\ast \psi_{T}  u^{n})^{p+1} (\rho_{n}\ast \psi_{T} (v^{n}))^{p}\right)    \|_{\rho, s,b'}
	\\ &\leq c T^{\frac{1}{2}} \|  u_{0} \|_{\mathcal{G}_{\rho, s} } +c T  \left(     \|   \psi_{T} u^{n}\|_{s,b}^{p+1}   \|   \psi_{T} v^{n}\|_{s,b}^{p} +
	\rho^{\frac{1}{2}} \|   \psi_{T} u^{n}\|_{ \rho,s,b}^{p+1}   \|   \psi_{T} v^{n}\|_{\rho,s,b}^{p} \right).
	\end{align*}
	For $ 0 < \rho    < \rho_{0}   $ and $    b' = b-1+ \epsilon ' $
	where $ \epsilon '> 0  $ and we use the Lemma \ref{lem23} to obtain
	$$   \|  \psi_{T}(t) u^{n}   \|_{ s,b}  \leq c T^{\frac{1}{2}} (1+\alpha_{T}(u^{n}, v^{n} ) )^{2p+1}
	\leq   2 c T^{\frac{1}{2}} (1+\alpha_{T}(u,v ) )^{2p+1},  $$
	and
	$$   \|  \psi_{T}(t) v^{n}   \|_{ s,b}  \leq c T^{\frac{1}{2}} (1+\alpha_{T}(u^{n}, v^{n}  ) )^{2p+1}
	\leq   2 c T^{\frac{1}{2}} (1+\alpha_{T}(u,v ) )^{2p+1}.  $$
	Then
	\begin{align*}
	\|  \psi_{T}(t) u^{n}   \|_{\rho(t), s,b} & \leq   c T^{\frac{1}{2}} \|  u_{0} \|_{\mathcal{G}_{\rho (t), s} }+  c T ^{\frac{2p+3}{2}} (1+\alpha_{T}(u,v ) )^{(2p+1)^{2}}   +
	~ c ~T^{\frac{1}{2}}  (\rho(t))^{\frac{1}{2}}  \|  \psi_{T} u^{n}\|_{ \rho (t),s,b}^{p+1}   \|   \psi_{T} v^{n}\|_{\rho(t),s,b}^{p}
	\\& \leq   c T^{\frac{1}{2}} \| ( u_{0}, v_{0} ) \|_{\mathcal{N}^{\rho(t), s} }+  c T ^{\frac{2p+3}{2}} (1+\alpha_{T}(u,v ) )^{(2p+1)^{2}}   +  c T^{\frac{1}{2}}  \rho (t)^{\frac{1}{2}}   \| (\Psi_{T} u^{n},  \Psi_{T} v^{n})\|_{\mathcal{B}_{ \rho(t),s,b}}^{2p+1}.
	\end{align*}
	holds for $   T \geq 1 $.\\
	{\bf In  the case  $   T = 1 $}, and by using Lemma \ref{lem23}  and Lemma \ref{lem4}, we have
	\begin{equation*}
	\| (\psi_{T}(t) u(.,t), \psi_{T}(t) v(.,t)) \|_{\mathcal{B}_{\rho(1),s,b}}\leq ~c~ T^{\frac{1}{2}} \left( 1+\kappa_{T} (u,v)    \right)^{{2p+1}},
	\end{equation*}
	where 
	\begin{equation*}
	\kappa_{T} (u,v)= \sup_{\substack{ t \in [0, 2]
	}}  \left(    \| (u, v  ) \|_{\mathcal{N}^{\rho, s+1}}  \right)^{2p+1},
	\end{equation*}
	\begin{align*}
	\|  \psi_{1}(t) u^{n}   \|_{\rho (1), s,b} & \leq   c \left( 1+    \sup_{\substack{ t \in [0, 2]
	}}  \left(    \| (u^{n}, v^{n} )  \|_{\mathcal{N}^{\rho(1), s+1}}  \right)     \right)^{2p+1} \\&
	\leq   2c  \left( 1+    \sup_{\substack{ t \in [0, 2]
	}}  \left(    \| (u, v )  \|_{\mathcal{N}^{\rho(1), s+1}}  \right)  \right)^{2p+1} \\&
	\leq 2cc_1 \left(  1+ \| ( u_{0},  v_{0} )\|^{2p+1}_{\mathcal{N}^{\rho(1), s+1} } + \sup_{\substack{t \in [0, 2]}}
	\left(    \| (u, v )  \|_{\mathcal{N}^{ s+1}}     \right)^{((2p+2)(2p+1))/2}  \right).
	\end{align*}
	We assume that
	$$   M^{\ast} =   2cc_1 \left( 1+ \| ( u_{0}, v_{0} )\|^{2p+1}_{\mathcal{N}^{\rho(1), s+1} } + \sup_{\substack{t \in [0, 2]}}
	\left(    \| (u, v)   \|_{\mathcal{N}^{ s+1}}     \right)^{((2p+2)(2p+1))/2} \right).        $$
	Then
	\begin{eqnarray}
	 \|  \psi_{T}(t) u^{n}     \|_{\rho(t), s,b}
	&\leq&  M^{\ast} +  c T^{\frac{1}{2}} \| ( u_{0}, v_{0} ) \|_{\mathcal{N}^{\rho_{0}, s} }+  c T ^{\frac{p+3}{2}} (1+\alpha_{T}(u,v ) )^{(2p+1)^{2}}   \nonumber\\
	&+&  c T^{\frac{1}{2}}  \rho(t)^{\frac{1}{2}}   \| ( \Psi_{T} u^{n}, \Psi_{T} v^{n})\|_{\mathcal{B}_{ \rho(t),s,b}}^{2p+1},\nonumber
	\end{eqnarray}
	and
	\begin{eqnarray}
	\| ( \psi_{T}(t) u^{n},\psi_{T}(t) v^{n} ) \|_{\mathcal{B}_{\rho(t), s,b}}
	&\leq&  M^{\ast} +  c T^{\frac{1}{2}} \| ( u_{0}, v_{0} ) \|_{\mathcal{N}^{\rho_{0}, s} }+  c T ^{\frac{p+3}{2}} (1+\alpha_{T}(u,v ) )^{(2p+1)^{2}}   \nonumber\\
	&+&  c T^{\frac{1}{2}}  \rho(t)^{\frac{1}{2}}   \| (\Psi_{T} u^{n},\Psi_{T}  v^{n})\|_{\mathcal{B}_{ \rho(t),s,b}}^{2p+1}.\nonumber
	\end{eqnarray}
	{For $ T \geq 1  $}, $ \rho(t) \leq \rho_{1} \leq \rho_{0}  $, and for large enough  $n$, we define the new variables 
	\begin{align*}
	& y=y (T)= \|  \psi_{T}(t) u^{n}, \psi_{T}(t) v^{n}  \|_{\mathcal{B}_{\rho(t), s,b}} \\&
	x = x(T)= M^{\ast} + c T^{\frac{1}{2}} \| ( u_{0}, v_{0} ) \|_{\mathcal{N}^{\rho_{0}, s} }+  c T ^{\frac{p+3}{2}} (1+\alpha_{T}(u,v ) )^{(2p+1)^{2}}  \\ &
	d= d(T)= c T^{1/2}.
	\end{align*}
	Then
	$$  y \leq x+ d \rho (T)^{\frac{1}{2}}  y^{2p+1}.     $$
	If define
	$$\rho (T) = \frac{a^{2}}{d^{2} x^{4p} 2^{4p}}.         $$
	Then
	$$  y \leq x+ d \rho (T)^{\frac{1}{2}}  y^{2p+1} \leq  x+ d  (\frac{a^{2}}{d^{2} x^{4p} 2^{4p}})^{\frac{1}{2}} y^{2p+1} \leq  x  +(\frac{a}{ (2 x)^{2p}  })   y^{2p+1} $$
	$$  \Longrightarrow y  \leq  x  +a \left( \frac{y}{ 2 x}\right) ^{2p}    y  \Longrightarrow \frac{y}{2x} \leq  \frac{1}{2}  +a (\frac{y}{ 2 x})^{2p+1}.           $$
	We define $ h(t) = \frac{y(t)}{ 2 x(t)}$. Then
	$$  h (1- a h^{2p} ) \leq   \frac{1}{2}.    $$
	We can choose small $ a $ for all $ p $, then  there is $ M'  $ and $  m'  $
	such that
	$$    \frac{1}{2}  < m' <  1 < M',   $$
	and
	$$  h \leq m'    \quad      or  \quad   h \geq M'.     $$
	As $  \|  \psi_{T}(t) u^{n},  \psi_{T}(t) v^{n}   \|_{\mathcal{B}_{\rho(t), s,b}}   $ is a continuous function of $ T \geq 1$, then
	$$   h(t) \geq m' < 1  \Longrightarrow  y(t) \leq 2x(t),     $$
	which means that
	$$      \|  \psi_{T}(t) u^{n}, \psi_{T}(t) v^{n}  \|_{\mathcal{B}_{\rho(t), s,b}}  \leq 2 x.         $$
	Then
	$$      \lbrace \Psi_{T} u^{n} \rbrace   ~~ ~~  and  ~~~   \lbrace \Psi_{T} v^{n} \rbrace  ~~~ \textit{bounded in }  X_{\rho (t), s,b}.  $$
On the other hand, we have
	\begin{equation}
	\left\{\begin{array}{l}
	\rho (t) < \rho_{1}\\
	\rho (t)=   \frac{a^{2}}{d^{2} x^{4p} 2^{4p}}.
	\end{array}\right.
	\end{equation}
	Since
	\begin{align*}
	x^{4p} = (x(T))^{4p}&=\left(    M^{\ast} + c T^{\frac{1}{2}} \| ( u_{0}, v_{0} ) \|_{\mathcal{N}^{\rho_{0}, s} }+  c T ^{\frac{p+3}{2}} (1+\alpha_{T}(u,v ) \right)^{4p}\\& \geq
	\left(     c T^{\frac{1}{2}} \| ( u_{0}, v_{0} ) \|_{\mathcal{N}^{\rho_{0}, s} }+  c T ^{\frac{p+3}{2}} (1+\alpha_{T}(u,v ) \right)^{4p}
	\\& \geq
	T^{\frac{4p}{2}}  \left(     c  \| ( u_{0}, v_{0} ) \|_{\mathcal{N}^{\rho_{0}, s} }+  c T ^{\frac{p+2}{2}} (1+\alpha_{T}(u,v ) \right)^{4p}\\& \geq
	T ^{2p} \left(       c T ^{\frac{p+2}{2}} (1+\alpha_{T}(u,v ) \right)^{4p} \\ &
	= T ^{2p^{2} +6p } (1+\alpha_{T}(u,v )^{4p}.
	\end{align*}
	Then
	\begin{align}
	x^{-4p} \leqslant (T )^{-2p^{2} -6p } (1+\alpha_{T}(u,v)^{-4p},
	\end{align}
	and
	\begin{align*}
	\rho (t) & =   \frac{a^{2}}{d^{2} x^{4p} 2^{4p}}=  \dfrac{a^{2}}{  c^{2} T x^{4p} 2^{4p}  }= \dfrac{a^{2}T^{-1} }{  c^{2}  x^{4p} 2^{4p}  } \leqslant  \dfrac{a^{2}T^{-1} T ^{-2p^{2} -6p }  }{  c^{2} ((1+\alpha_{T}(u,v ))^{4p}   2^{4p}  } \\ \\ \rho (t) &
	\leqslant  \dfrac{a^{2}  }{  c^{2} ((1+\alpha_{T}(u,v ))^{4p}   2^{4p}  }  T ^{-2p^{2} -6p-1} =  K T ^{-2p^{2} -6p-1},
	\end{align*}
	where
	$$ K =   \dfrac{a^{2}  }{  c^{2} ((1+\alpha_{T}(u,v ))^{4p}   2^{4p}  }.  $$
	and
 $$ \rho (t) =  \min \left\lbrace   \rho_{1},  K T ^{-2p^{2} -6p-1}  \right\rbrace. $$

\end{proof}
We are now in potion to prove  Theorem \ref{th03}.
\begin{proof}{Of Theorem \ref{th03}.}
We have $ ( u_{0}, v_{0} )  \in  \mathcal{N}^{\rho_{0}, s+1} $,
then by Theorem  \ref{the1.2}, we obtain 
$$ ( u, v )  \in  C([0, T^{*}], \mathcal{G}_{\rho_{0}, s+1} ) \times C([0, T^{*}],\mathcal{G}_{\rho_{0}, s+1} ).   $$
We prove that
$$    (u,v )\in    C \left( [0, T], \mathcal{G}_{\frac{\rho (t)}{2}, s+1} \right) \times  C \left( [0, T],\mathcal{G}_{\frac{\rho (t)}{2}, s+1}  \right).                                $$
If $  T^{*} = \infty,$ it is done.\\
If $   T^{*} < \infty           $, it remains to prove that
$$    (u,v )\in    C \left( [0, T], \mathcal{G}_{\frac{\rho (t)}{2}, s+1} \right) \times  C \left( [0, T], \mathcal{G}_{\frac{\rho (t)}{2}, s+1}  \right),                    \quad \forall  ~~ T \geqslant T^{*}.             $$
From the Proposition \ref{pro02}, we obtain that  the sequence $ \lbrace  ( u^{n}, v^{n}  ) \rbrace   $  is solution of $(\ref{eq10})   $ where $ (u_{0}, v_{0}      )   $ is bounded in    $ \mathcal{G}_{\rho (t), s}  $ uniformly on $ [0, T  ]   $.\\
	By  using  Lemma \ref{lem2},  with $ (u^{n}, v^{n} )$  satisfies $(\ref{eq10} )$ then, we obtain
	\begin{align*}
	( \partial_{t} u^{n},\partial_{t} v^{n} )
	\quad
	( \partial_{x} u^{n}, \partial_{x} v^{n} )
	\quad
	( \partial_{x}^{3} u^{n}, \partial_{x}^{3} v^{n} )
|\quad  \textit{are uniformly  bounded on the  strip} \quad  G_{\frac{\rho(t)}{2}, s}.
	\end{align*}
	Then
	\begin{align*}
	( \partial_{t} u^{n},\partial_{t} v^{n} )
	\quad
	( \partial_{x} u^{n}, \partial_{x} v^{n} )
	\quad
	( \partial_{x}^{3} u^{n}, \partial_{x}^{3} v^{n} )
|\quad  \textit{   are equicontinuous families on strip } \quad  G_{\frac{\rho(t)}{2}, s}.
	\end{align*}	
	 Then, we can extract a subsequence   (without changing symbol of $ \lbrace  ( u^{n}, v^{n}  ) \rbrace   $ ) converging uniformly on compact subsets of  $ (0, T  ) \times G_{\frac{\rho(t)}{2}, s }    $  to smooth function $ ( \tilde{u},\tilde{v}  )$ and
	\begin{equation*} 
	( \partial_{t} u^{n},\partial_{t} v^{n} )
	\quad
	( \partial_{x} u^{n}, \partial_{x} v^{n} )
	\quad
	( \partial_{x}^{3} u^{n}, \partial_{x}^{3} v^{n} ) \quad
	\textit{ is  converging uniformly on compact subsets of}   \quad  (0, T  ) \times G_{\frac{\rho(t)}{2}, s }.
	\end{equation*}
Next we passe to the  limit in $   (\ref{eq10} ) $, we obtain that $ (\tilde{u}, \tilde{v})   $ is  a smooth extension of  $ (u,v)$.\\	
	Since,  $   ( u^{n}, v^{n}  )     $  is analytic   $  G_{\frac{\rho(t)}{2}, s }    $ to the $  (\tilde{u}, \tilde{v})$, so $(\tilde{u}, \tilde{v})    $ is analytic in $   G_{\frac{\rho(t)}{2}, s }$, on the other hand, since    $ \lbrace  ( u^{n}, v^{n}  ) \rbrace   $   is bounded in $  G_{\frac{\rho(t)}{2}, s }   $ uniformly on  $ [0, T   ],$ then
	$$  \tilde{u}  \equiv u \in L^{\infty} ( (0, T),\mathcal{G}_{\frac{\rho(t)}{2}} )   ~~~ \quad, ~~~~~  \tilde{v}  \equiv   v      \in L^{\infty} ( (0, T),\mathcal{G}_{\frac{\rho(t)}{2}} ),  $$
	then	
	$$ u \in C( (0, T),\mathcal{G}_{\frac{\rho(t)}{2}} )   ~~~ \quad, ~~~~~     v      \in C ( (0, T),\mathcal{G}_{\frac{\rho(t)}{2}} ). $$
\end{proof}

{\small 
\address{\textbf{Amel Atmani} \newline
	\textit{Department of Mathematics  Bordj Bou Arreridj University,  Algeria. Email address: amel.atmani@univ-bba.dz}}

\address{\textbf{Aissa Boukarou} \newline
\textit{Department of Mathematics, University of Ghardaia, Algeria. Email address: boukarouaissa@gmail.com}}

\address{\textbf{Djamila Benterki} \newline
	\textit{Department of Mathematics  Bordj Bou Arreridj University,  Algeria. Email address:benterkidj@yahoo.fr}}

\address{\textbf{Khaled zennir} \newline
	\textit{Department of Mathematics, College of Sciences and Arts, Qassim University, Ar-Rass, Saudi Arabia Email address:k.zennir@qu.edu.sa \\
Laboratoire de Math\'ematiques Appliqu\'ees et de Mod\'elisation, Universit\'e 8 Mai 1945 Guelma. B.P. 401 Guelma 24000 Alg\'erie.}}

\end{document}